\documentclass[11pt]{amsart} 
\usepackage{verbatim, latexsym, amssymb, amsmath,color}
\usepackage{epsfig}
\usepackage{tikz}
\usepackage{hyperref}
\usepackage{float}
\usetikzlibrary{matrix}
\usepackage{graphicx}
\def\F{\text{\bf F}}
\def\A{\mathcal{A}}
\def\R{\mathbb R}
\def\N{\mathbb N}
\def\Z{\mathcal Z}
\def\LL{\textbf{L}}
\def\B{\tilde{B}}
\def\S{\tilde{S}}
\def\Atil{\tilde{A}}
\def\Slam{S_{\lambda}}
\def\f{\textbf f}
\def\n{\textbf n}
\def\M{\textbf M}
\def\H{\mathcal H}
\def\V{\mathcal V}
\def\X{\mathfrak{X}}
\def\Flat{\mathcal F}
\def\m{\textbf m}
\def\mga{\Z_n(M, (M\setminus U)\cup \gamma)}
\def\mulam{\mu_{\lambda}}
\def\parj{\frac{\partial}{\partial x_j}}

\def\pars{\frac{\partial}{\partial s}}
\def\pat{\frac{\partial}{\partial t}}
\def\hessq{\text{Hess } r_q}

\newtheorem{thm}{Theorem}[section]
\newtheorem{lemma}[thm]{Lemma}
\newtheorem{claim}[thm]{Claim}
\newtheorem{cor}[thm]{Corollary}
\newtheorem{prop}[thm]{Proposition}
\theoremstyle{remark}
\newtheorem*{rmk}{Remark}

\theoremstyle{definition}
\newtheorem{defi}[thm]{Definition}

\title[Minimal hypersurfaces with fixed boundary]{A mountain pass theorem for minimal hypersurfaces with fixed boundary}
\author{Rafael Montezuma}
\address{Princeton University, Fine Hall, Princeton NJ 08544, USA}
\email{rcabral@math.princeton.edu}

\begin{document}

\begin{abstract}
{ {In this work, we prove the existence of a third embedded minimal hypersurface spanning a closed submanifold $\gamma$ contained in the boundary of a compact Riemannian manifold with convex boundary, when it is known a priori the existence of two strictly stable minimal hypersurfaces that bound $\gamma$. In order to do so, we develop min-max methods similar to those of \cite{dl-r}, by De Lellis and Ramic, adapted to the discrete setting of Almgren and Pitts. }}
\end{abstract}
\maketitle
\setcounter{tocdepth}{1}

\section{Introduction}\label{introduction}


We are concerned with the problem of existence of a third embedded minimal surface spanning a smooth closed curve $\gamma$, possibly with multiple components, when it is known a priori the existence of two other minimal surfaces of minimum type with boundary $\gamma$, or satisfying other natural local minimization condition. We are also interested in the higher dimensional codimension one version of this problem in the Riemannian setting.

Since the classical works of Morse and Tompkins \cite{M+T}, and Shiffman \cite{Sh}, in the parametric setting of the Plateau's problem, this type of problem has been studied extensively in the case of $2$-dimensional submanifolds spanning a contour in Euclidean space. See, for instance, the deep results contained in \cite{JS} and in the references therein.

In a recent paper, \cite{dl-r}, De Lellis and Ramic obtained a very general result of that type. Indeed, our work was motivated by a question posed in \cite{dl-r}. We also apply tools and ideas from that work.

Our main result is the following.

\begin{thm}\label{teorema-A}
Let $M^{n+1}$ be a compact, oriented, Riemannian manifold with strictly convex boundary, and $\gamma^{n-1}$ be a closed, embedded, oriented, smooth submanifold of $\partial M$. Suppose that there exist distinct embedded, oriented, smooth, strictly stable minimal hypersurfaces $\Gamma_1$ and $\Gamma_2$, such that $\partial \Gamma_i = \gamma$, for $i=1,2$, and $\Gamma_1$ and $\Gamma_2$ are homologous. Suppose also that all connected components of each $\Gamma_i$ have non-empty boundary.

Then, there exists a distinct embedded minimal hypersurface $\Sigma$ in $M$, which has a singular set $sing(\Sigma) = \overline{\Sigma}\setminus \Sigma$ of Hausdorff dimension at most $n-7$, and $sing(\Sigma)\cap (\partial M) = \varnothing$. Moreover, $\partial \Sigma = \gamma$ and there is a connected component of $\Sigma$ which is contained neither in $\Gamma_1$, nor in $\Gamma_2$.
\end{thm}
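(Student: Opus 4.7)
The plan is to run a discrete mountain-pass / min-max scheme in the Almgren--Pitts framework, adapted to the fixed-boundary setting of De Lellis--Ramic \cite{dl-r}. Fix a small open neighborhood $U$ of $\gamma$ in $\partial M$ and work in the space $\mga$ of relative integral $n$-cycles with the flat metric $\F$; in this space $\Gamma_1$ and $\Gamma_2$ become distinct homologous elements. Let $\Pi$ be the class of continuous paths $\{T_t\}_{t\in[0,1]}\subset\mga$ with $T_0=\Gamma_1$, $T_1=\Gamma_2$, and set
\[
L \;=\; \inf_{\{T_t\}\in\Pi}\,\sup_{t\in[0,1]}\M(T_t),
\]
together with its discrete counterpart $\LL$ along admissible Almgren--Pitts families in the $(\M,\nu)$-topology.

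The central step is the strict inequality $L>\max\{\M(\Gamma_1),\M(\Gamma_2)\}$. Because each $\Gamma_i$ is strictly stable and every connected component has nonempty boundary in $\gamma$ -- so that $\Gamma_i$ admits a fixed-boundary normal-graph parametrization in a tubular neighborhood -- a quantitative second-variation argument should yield constants $\rho_i,c_i>0$ with
\[
\M(T)\;\geq\;\M(\Gamma_i)+c_i\,\F(T,\Gamma_i)^2 \quad\text{for every }T\in\mga\text{ with }0<\F(T,\Gamma_i)<\rho_i.
\]
A squeezing/deformation lemma in the spirit of \cite{dl-r} then forces any path in $\Pi$ to leave both flat-balls and cross a saddle level exceeding $\max_i\M(\Gamma_i)$ by a uniform amount $\delta>0$ independent of the sweepout. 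Transferring the estimate to the discrete $(\M,\nu)$-setting yields $\LL>\max_i\M(\Gamma_i)$.

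With this width in hand, I would apply the Almgren--Pitts existence theorem for min-max in $\mga$: pull tight a minimizing sequence, extract almost-minimizing sequences in annuli, and pass to a stationary integral varifold $V$ of mass $\LL$. Strict convexity of $\partial M$ together with the maximum principle keeps $\operatorname{supp}\|V\|$ disjoint from $\partial M$ outside $\gamma$; Pitts--Schoen--Simon regularity in the interior combined with boundary regularity at $\gamma$ adapted from \cite{dl-r} produces an embedded minimal hypersurface $\Sigma$ with $\partial\Sigma=\gamma$, singular set of Hausdorff dimension at most $n-7$, and $sing(\Sigma)\cap\partial M=\varnothing$. Because $\M(\Sigma)=\LL>\max_i\M(\Gamma_i)$ and the $\Gamma_i$ have no closed components, $\Sigma$ cannot be realized as a current supported inside $\Gamma_1\cup\Gamma_2$, so at least one connected component of $\Sigma$ lies outside both.

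The principal obstacle is the strict mountain-pass inequality $\LL>\max_i\M(\Gamma_i)$. Even in the continuous setting this requires a delicate fixed-boundary deformation that turns a small $\F$-ball around $\Gamma_i$ into a strict basin for $\M$; the discrete version additionally demands competitor sweepouts constructed by interpolating fine triangulations of $M$ that are compatible with $\gamma$, while preserving the quantitative flat estimate. The remaining ingredients -- pulling tight, almost-minimality, varifold compactness, and regularity both in the interior and at $\gamma$ -- should then go through by direct adaptation of the Almgren--Pitts--Schoen--Simon machinery, once the boundary-regularity tools of \cite{dl-r} are translated into the discrete framework.
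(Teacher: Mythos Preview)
Your overall strategy matches the paper's: construct a discrete sweepout in $\Z_n(M,\gamma)$ joining $\Gamma_1$ to $\Gamma_2$ via the homology assumption, invoke the strict-stability lower bound from \cite{dl-r} (their Lemma~11.2, or \cite{IM}) to get $\LL(\Pi)>\max_i\M(\Gamma_i)$, and apply the min-max Theorem~\ref{teorema-B}. The existence and regularity machinery you outline is exactly what the paper builds.

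There is, however, a genuine gap in your final paragraph. The inequality $\M(\Sigma)=\LL>\max_i\M(\Gamma_i)$ does \emph{not} by itself exclude that $\Sigma$ is a union of connected components of $\Gamma_1\cup\Gamma_2$. When $\gamma$ has several components, one can form an embedded hypersurface with boundary $\gamma$ by taking some components from $\Gamma_1$ and the remaining boundary components from $\Gamma_2$; such a ``mixed'' hypersurface can perfectly well have mass larger than both $\M(\Gamma_1)$ and $\M(\Gamma_2)$, and nothing you have said prevents the min-max varifold from landing on it. The paper handles this by first listing all embedded hypersurfaces with boundary $\gamma$ that are combinations of components of $\Gamma_1$ and $\Gamma_2$, choosing $\tilde\Gamma_1$ among them of \emph{maximal} $\H^n$-measure, letting $\tilde\Gamma_2$ be the complementary combination, checking that $\tilde\Gamma_1-\tilde\Gamma_2=\Gamma_1-\Gamma_2$ (so they are still homologous and distinct), and then running the min-max between $\tilde\Gamma_1$ and $\tilde\Gamma_2$. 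Now $\LL(\Pi)>\H^n(\tilde\Gamma_1)$ beats \emph{every} such combination, which is what actually forces a new component.

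A second, smaller point: you write $\M(\Sigma)=\LL$, but Theorem~\ref{teorema-B} only gives $\|V\|(M)=\LL$ with $V=\sum_i m_i\overline{\Sigma_i}$. The identity $\H^n(\Sigma)=\LL$ requires all $m_i=1$, which holds only for the components meeting $\gamma$. The paper splits into two cases: if some $\Sigma_i$ is closed, that component is already the ``new'' one (since the $\Gamma_j$ have no closed components); if every $\Sigma_i$ has nonempty boundary, then all $m_i=1$ and your mass inequality applies. You should make this dichotomy explicit.
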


The minimal hypersurface $\Sigma$ obtained in the above theorem could be the union of an embedded closed minimal hypersurface $\Sigma^{\prime}$ that does not intersect $\partial M$ and some connected components of the $\Gamma_i$, $i=1,2$. In this case, $\Sigma^{\prime}$ is disjoint from all these components. As in the proof of Corollary 1.9 of \cite{dl-r}, we are also able to rule out the possibility of $\Sigma$ being simply a combination of connected components of $\Gamma_1$ and $\Gamma_2$. In the case that all connected components of $\Sigma$ have non-empty boundary, this hyperfurface satisfies the further inequality $\H^n(\Sigma)> max\{\H^n(\Sigma_1), \H^n(\Sigma_2)\}$.

If we drop the assumption that the hypersurfaces $\Gamma_i$ are homologous, in the statement of Theorem \ref{teorema-A}, we can minimize area in the homology class of $\Gamma_1-\Gamma_2$ in $H_n(M)$ to obtain a third minimal hypersurface, which is closed.

Our main theorem can be applied when $\gamma$ is embedded in Euclidean space and contained in the boundary of a convex body. More precisely, we have the following consequence of Theorem \ref{teorema-A}.

\begin{cor}\label{teorema-A2}
Let $\gamma$ be a closed, oriented, smooth $(n-1)$-dimensional embedded submanifold of the boundary of a convex body $\Omega$ in $\R^{n+1}$. Suppose that there exist distinct embedded, oriented, smooth, strictly stable minimal hypersurfaces $\Gamma_1$ and $\Gamma_2$ in $\Omega$, with $\partial \Gamma_1 = \partial \Gamma_2 = \gamma$.

Then, there exists a distinct embedded minimal hypersurface $\Sigma$ in $\Omega$, which has a singular set $sing(\Sigma) = \overline{\Sigma}\setminus \Sigma$ of Hausdorff dimension at most $n-7$, with $sing(\Sigma)\subset int(\Omega)$ and $\partial \Sigma = \gamma$. Moreover, there is a connected component of $\Sigma$ which is contained neither in $\Gamma_1$, nor in $\Gamma_2$.
\end{cor}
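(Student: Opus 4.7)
The strategy is to deduce the corollary from Theorem~\ref{teorema-A} applied to a suitable smooth, strictly convex body $\tilde\Omega\subset\R^{n+1}$ with $\gamma\subset\partial\tilde\Omega$, after verifying that the missing hypotheses of Theorem~\ref{teorema-A} hold automatically in the Euclidean setting and that the hypersurface produced by the theorem can be placed back inside $\Omega$.

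The two hypotheses that are absent from the statement of the corollary are immediate in this context. Since $\Omega$ is contractible, $H_n(\Omega;\mathbb Z)=0$, so $\Gamma_1$ and $\Gamma_2$ are homologous modulo $\gamma$. Furthermore, $\R^{n+1}$ admits no closed minimal hypersurface: the restriction of any linear coordinate function to such a hypersurface would be a nonconstant harmonic function on a closed manifold, which is impossible. This rules out any closed connected component of the $\Gamma_i$ and, as we shall see, also of $\Sigma$.

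The main technical point is the construction of $\tilde\Omega$: a compact, smooth, strictly convex body in $\R^{n+1}$ with $\gamma$ lying in its boundary. The plan is to work first near $\gamma$, where the smoothness of $\gamma$ as a codimension-two embedded submanifold allows one to specify a smooth convex hypersurface locally containing $\gamma$; this piece is then extended globally to a smooth, strictly convex, closed hypersurface enclosing the convex hull of $\gamma$, after which any remaining regions can be rounded off preserving strict convexity. Since the ambient metric is always the flat Euclidean metric, the strict stability of each $\Gamma_i$ is intrinsic to the pair $(\Gamma_i,\gamma)$ and is therefore unaffected; moreover, by the convex hull property each $\Gamma_i$ lies automatically in $\tilde\Omega$, so the hypotheses of Theorem~\ref{teorema-A} are fully in place. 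I anticipate that this construction, in particular maintaining strict convexity after the local modification along $\gamma$, will be the main obstacle.

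Applying Theorem~\ref{teorema-A} to $(\tilde\Omega,\gamma,\Gamma_1,\Gamma_2)$ yields an embedded minimal hypersurface $\Sigma\subset\tilde\Omega$ with singular set of Hausdorff dimension at most $n-7$ disjoint from $\partial\tilde\Omega$, with $\partial\Sigma=\gamma$, and with a connected component not contained in $\Gamma_1\cup\Gamma_2$. To conclude that $\Sigma\subset\Omega$, the harmonic-coordinate argument above rules out any closed component of $\Sigma$, so every component has nonempty boundary in $\gamma$; the convex hull property for minimal hypersurfaces---obtained by applying the maximum principle to linear coordinate functions on the regular part, with a standard capacity argument handling the singular set of codimension at least seven---then places each component of $\Sigma$ inside the convex hull of $\gamma\subset\Omega$, which finishes the proof.
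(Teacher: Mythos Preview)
Your overall plan—check that the homology and no-closed-component hypotheses of Theorem~\ref{teorema-A} hold automatically in Euclidean space, apply the theorem, and use the maximum principle to control the output—is exactly the paper's reasoning. The paper disposes of the corollary in one sentence, attributing the simplifications to the nonexistence of closed minimal hypersurfaces in $\R^{n+1}$, and evidently applies Theorem~\ref{teorema-A} with $M=\Omega$ directly; the phrase ``convex body'' is being read as a smooth domain with strictly convex boundary.

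Your detour through an auxiliary $\tilde\Omega$ is therefore unnecessary under the intended reading, and in the generality you allow it is in fact obstructed, not merely difficult. If $\Omega$ is an arbitrary convex body then $\partial\Omega$ may have a flat face, and $\gamma\subset\partial\Omega$ can then contain a straight line segment; since the boundary of any strictly convex body contains no segments, there is \emph{no} smooth strictly convex $\tilde\Omega$ with $\gamma\subset\partial\tilde\Omega$ in that case. So the step you flag as ``the main obstacle'' can fail outright. If one simply takes $\tilde\Omega=\Omega$ (smooth, strictly convex), the issue evaporates, and your remaining steps—contractibility for the homology condition, harmonic coordinates to rule out closed components of the $\Gamma_i$ and of $\Sigma$, and the convex hull property (pushed past the codimension $\geq 7$ singular set by a capacity argument) to place $\Sigma$ back inside $\Omega$—are all correct.
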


The simplifications in the statement of Corollary \ref{teorema-A2} are consequences of the maximum principle for minimal hypersurfaces; i.e., there is no immersed closed minimal hypersurface in the Euclidean space $\R^{n+1}$.



In their paper, De Lellis and Ramic proved a result similar to Theorem \ref{teorema-A}, see Corollary 1.9 of \cite{dl-r}. There, the authors were able to obtain the existence of a third minimal hypersurface under an additional hypothesis; they assume that the initial minimal hypersurfaces $\Gamma_1$ and $\Gamma_2$ meet only at the boundary and bound some open region in $M$. In \cite{dl-r}, at the end of Section 1, the authors suggest that a generalization of their Corollary 1.9 in the direction of our main theorem, using a different min-max approach, might be possible.



In order to prove Theorem \ref{teorema-A}, we obtain a min-max existence result for minimal hypersurfaces with fixed boundary. Our approach is based on the original discrete setting of Almgren and Pitts, see \cite{pitts}. The variational methods developed in \cite{pitts} imply that any closed Riemannian manifold, of dimension $3\leq n+1\leq 6$, contains a closed embedded minimal hypersurface. Schoen and Simon \cite{SS} extended this for all dimensions; the closed minimal hypersurface that they obtain in higher dimensional spaces, $n+1\geq 7$, has the same regularity property of that in our main theorem. Recently, mainly after the recent works by Marques and Neves, min-max constructions of minimal submanifolds and its applications have obtained the attention of the mathematical community to a considerable extent.

In the case of closed ambient manifolds, one considers paths in the space $\Z_n(M)$ of codimension one cycles in $M$ starting and ending at the zero cycle. In our setting, we consider $M$ and $\gamma$ as in the statement of Theorem \ref{teorema-A}, and look at paths in the space $\Z_n(M, \gamma)$ of codimension one integral currents in $M$ with boundary supported in $\gamma$. For our main application, we impose further that these paths join the currents induced by the hypersurfaces $\Gamma_i$ that appear in that statement. If $\Pi$ denotes a homotopy class of paths satisfying those properties, we consider the min-max invariant
\begin{equation*}
\LL (\Pi) = \inf_{\{\Sigma_t\}\in \Pi} \sup \{\M(\Sigma_t) : t\in [0,1]\}.
\end{equation*}
Here, $\M(T)$ denotes the mass of the current $T$, which coincides with the $n$-dimensional volume in case $T$ is induced by an oriented hypersurface. 

In the discrete setting, these paths are represented by sequences of maps which are defined in special discrete subsets of the interval $I = [0,1]$, and are finer and finer in the mass norm. We also consider homotopy classes $\Pi \in \Pi_m^{\#}(Z_n(M,\gamma), P_0)$ of $m$-parameter discrete paths in $\Z_n(M, \gamma)$ relative to a fixed map $P_0 : \partial I^m\rightarrow \Z_n(M, \gamma)$, for every positive integer $m$. We use $I^m$ to denote the closed $m$-cube $[0,1]^m = [0,1]\times\cdots\times [0,1]$.

We prove the following min-max existence theorem.

\begin{thm}\label{teorema-B}
Let $M^{n+1}$ and $\gamma^{n-1}$ be as in the statement of Theorem \ref{teorema-A}, and $\Pi \in \Pi_m^{\#}(Z_n(M,\gamma), P_0)$ be a class of sweepouts such that
\begin{equation*}
\LL (\Pi) > \sup\{\M(P_0(x)) : x\in \partial I^m\}.
\end{equation*}
Then, there exists an integral varifold $V\in \V_n(M)$ whose support is the closure of an embedded smooth minimal hypersurface $\Sigma$ such that the closed set $sing(\Sigma) = \overline{\Sigma}\setminus \Sigma$ has Hausdorff dimension at most $n-7$. If $\Sigma_1, \ldots, \Sigma_k$ denote the connected components of $\Sigma$, then:
\begin{enumerate}
\item[(1)] $sing(\Sigma_i)\cap \partial M = \varnothing$, for every $i = 1, \ldots, k$;

\item[(2)] each $\partial \Sigma_i$ is either the empty set or an union of components of $\gamma$. Moreover, $\cup_{i=1}^k \partial \Sigma_i = \gamma$;

\item[(3)] there are positive integers $m_1, \ldots, m_k$, such that $V = \sum_{i=1}^k m_i \overline{\Sigma_i}$. The multiplicity of components $\Sigma_i$ with non-empty $\partial \Sigma_i$ is $m_i=1$;

\item[(4)] $V$ is a limit varifold for some critical sequence in $\Pi$, and
\begin{equation*}
\LL (\Pi) = ||V||(M) =\sum_{i=1}^k m_i \H^n(\Sigma_i).
\end{equation*}
\end{enumerate} 
\end{thm}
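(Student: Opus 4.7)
The plan is to transplant the discrete Almgren--Pitts min-max machinery into the relative setting, in which the cycles carry a fixed boundary along $\gamma$. The argument follows the classical outline -- pull-tight, extract an almost-minimizing representative, invoke regularity -- with the boundary behavior requiring additional input from the strict convexity of $\partial M$ and from the boundary regularity scheme of De Lellis--Ramic.

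First, I would take any minimizing sequence $\{\phi_i\}\subset \Pi$ with $\sup_x \M(\phi_i(x))\to \LL(\Pi)$ and apply the Pitts pull-tight: a continuous deformation on $\V_n(M)$ built from the first variation functional, designed so that non-stationary varifolds lose a definite amount of mass while stationary varifolds are fixed. The strict inequality $\LL(\Pi) > \sup_{\partial I^m}\M(P_0(x))$ is exactly what allows this deformation to be performed without altering $P_0$, keeping the tightened sequence inside $\Pi$. The outcome is a critical sequence $\{\phi_i^{\prime}\}$ whose limit varifolds are stationary with respect to deformations compactly supported in $M\setminus \gamma$.

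Second, I would apply Pitts' combinatorial argument to $\{\phi_i^{\prime}\}$ to extract a further critical sequence which is almost minimizing in sufficiently small annuli centered at every point of $M\setminus \gamma$, and pass along a subsequence to an integral stationary varifold $V$. The interior regularity theorem for stationary almost-minimizing integral varifolds -- Pitts in low dimensions and Schoen--Simon for $n+1\geq 7$ -- then yields that the support of $V$ inside $M\setminus \gamma$ is an embedded minimal hypersurface $\Sigma$ with $sing(\Sigma)$ of Hausdorff dimension at most $n-7$, and that $V=\sum_i m_i\overline{\Sigma_i}$ for positive integers $m_i$. The mass identity $\LL(\Pi)=\|V\|(M)$ follows from tightness together with the almost-minimizing property, establishing item (4) modulo the boundary statements.

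The main obstacle is the analysis near $\gamma$ and $\partial M$. I need to show first that the singular set avoids $\partial M$, and second that each component of $\Sigma$ whose closure meets $\gamma$ is smooth up to $\gamma$ with multiplicity one and that $\cup_i \partial \Sigma_i = \gamma$. The first follows from strict convexity of $\partial M$ together with the classical maximum principle, which rules out tangential contact between $\Sigma$ and $\partial M$ away from $\gamma$. For the remaining statements, I would adapt the boundary regularity argument of De Lellis--Ramic to the discrete Almgren--Pitts framework: the boundary condition on the competitors $\phi_i^{\prime}(x)$ forces $\partial V = \gamma$ as a current, and combined with the almost-minimizing property in half-balls centered on $\gamma$ and the standard boundary regularity theory for area-minimizing currents with smooth boundary data, this produces the smooth extension and the multiplicity one claim on components meeting $\gamma$. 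The bulk of the technical work sits in this last step: making the discrete approximation compatible with the fixed-boundary constraint, as suggested at the end of Section 1 of \cite{dl-r}, is the crucial adjustment required.
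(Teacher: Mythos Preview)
Your outline has the right skeleton, but the boundary regularity step as you describe it would not go through, and there is a related gap in the almost-minimizing step.

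First, you restrict the almost-minimizing property to annuli centered at points of $M\setminus\gamma$. The paper needs it at \emph{every} $p\in M$, including $p\in\gamma$; this is the content of Theorem~\ref{comb-arg}. Getting that requires a boundary-aware version of the almost-minimizing classes $\A(U;\varepsilon,\delta;\nu)$ in which competitors satisfy $(\partial T)\llcorner U=[|\gamma|]\llcorner U$, together with a nontrivial adaptation of Pitts' Lemma~3.8 (the flat-to-mass interpolation) near $\gamma$. That adaptation, Lemma~\ref{interp.flat-to-mass} and its supporting Lemmas~\ref{conical-replacements}--\ref{choice-p_i}, is where most of the new work sits: one has to build conical replacements compatible with the fixed-boundary constraint, which neither the usual exponential map nor Fermi coordinates provide. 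Without almost-minimizing at $p\in\gamma$, you cannot construct replacements in annuli centered there, and those replacements are exactly what drive the boundary regularity.

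Second, the mechanism you propose for regularity at $\gamma$ is not the right one. The min-max object $V$ is a varifold, so ``$\partial V=\gamma$ as a current'' is not meaningful, and $V$ is not area-minimizing, so Allard-type boundary regularity for minimizers does not apply to $V$ directly. What the paper does instead is: (i) classify any tangent cone $C\in\text{Var Tan}(V,p)$ at $p\in\gamma$ as a finite sum of half-spaces through $T_p\gamma$ (Proposition~\ref{k-half-spaces}, via Allard's Lemma~5.1 in \cite{al-2}); (ii) take replacements $V_j$ of $V$ in annuli $clos(A(p,r_j,2r_j))$ --- these \emph{are} locally area-minimizing with boundary $\gamma$, hence smooth up to $\gamma$ with multiplicity one by \cite{al-1,al-2}; (iii) blow up the $V_j$ to a limit $\overline C$ and use the reflection principle plus stationarity to show $\overline C=C$; (iv) since $\overline C$ is smooth with multiplicity one near $T_p\gamma$ in the annulus $A(O,1,2)$, conclude $C$ is a \emph{single} half-space; (v) now Allard boundary regularity applies. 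The multiplicity-one claim for components meeting $\gamma$ comes out of this chain, not from a current-boundary identity for $V$.
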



In \cite{dl-r}, the authors also obtain a min-max theorem similar to ours. The key difference is the approach that they use to develop their program, which is based on the framework introduced by De Lellis and Tasnady, see \cite{dl-t}. Besides the differences between the two settings, we are still able to apply in our proofs several tools developed in \cite{dl-r}. Among these, we highlight the application of the curvature estimates at the boundary of stable minimal hypersurfaces introduced in Section 6.4 of that paper.


The main advantage of using our approach in the argument of the proof of Theorem \ref{teorema-A} is that the construction of a $1$-parameter family in $\Z_n(M, \gamma)$ joining the hypersurfaces $\Gamma_i$ can be done assuming that these are homologous only. The non-triviality of the homotopy class, expressed by the inequality in the statement of Theorem \ref{teorema-B}, follows from the same arguments as in \cite{dl-r}. Then, we explore this non-triviality to obtain the third minimal hypersurface as an application of our min-max theorem. 

The assumption that all connected components of the $\Gamma_i$ have non-empty boundary, in Theorem \ref{teorema-A}, is used to avoid the following situation. If $\Gamma_1$ has a closed component $\Gamma_1^{\prime}$, the min-max varifold $V$ could be a sum of the components of $\Gamma_1$, with $\Gamma_1^{\prime}$ appearing with multiplicity different from one.

In order to prove Theorem \ref{teorema-B}, we adapt to the present setting several arguments from \cite{C-DL, dl-r, Li-Zhou, MN-willmore, pitts}; including interpolation and discretization results, the pull-tight argument, Pitts' almost minimizing condition and construction of comparison surfaces. As in \cite{dl-r}, the boundary regularity of our comparison surfaces also rely on Allard's theory, see \cite{al-1, al-2}.



In future work, we will study the Morse index of the minimal hypersurface obtained in Theorem \ref{teorema-A}. This will involve a Morse inequality type estimate, see \cite{MN-index} for general Morse index bounds in the closed case.


\subsection*{Acknowledgments} 
The author wishes to express his deep gratitude to professors Fernando Cod\'a Marques and Andr\'e Neves for their support. When this project was started, the author was supported by the EPSRC on the Programme Grant entitled `Singularities of Geometric Partial Differential Equations' reference number EP/K00865X/1.


\subsection*{Organization} The content of this paper is organized as follows:

In Section \ref{minmax-sect}, we fix some basic notation and explain our min-max setting. In Section \ref{sect-interpolation}, we present our adaptations of some technical tools from min-max theory; we describe our interpolation and discretization theorems. In Section \ref{sect-pull-tight}, we recall the stationarity condition introduced in \cite{dl-r}, and include the analog in our setting of the tightening map. In Section \ref{sect-amc}, an almost minimizing condition for varifolds relative to approximating integral currents with a fixed boundary is introduced. We also explain how to adapt to the present framework the main constructions and facts related with that notion. In Section \ref{sect-teoremaB}, we present some tools that are used in the boundary regularity and prove Theorem \ref{teorema-B}. In Section \ref{sect-teoremaA}, we present the remaining arguments to conclude our main result, Theorem \ref{teorema-A}.



\section{GMT notation and min-max definitions}\label{minmax-sect}

In this section, we fix the basic notation that will be used throughout the paper, and present the setting in which we prove the main theorems. In subsection \ref{basic-defs}, we recall some notation from geometric measure theory, and include the definition of the discrete domains of our sweepouts. In subsection \ref{subsect-minmax}, we adapt the basic min-max definitions to our setting.

\subsection{Basic notation}\label{basic-defs}

In this paper we use $M^{n+1}$ and $\gamma^{n-1}$ to denote a compact Riemannian manifold with boundary and a closed submanifold of $\partial M$, respectively. Besides the assumptions in the statement of Theorem \ref{teorema-A}, suppose that $M$ is isometrically embedded in the Euclidean space $\R^N$. The main relevant spaces in this work are: $I_k(M)$ of integral $k$-currents in $\R^N$ which are supported in $M$; the closure $\V_k(M)$ of the space of rectifiable $k$-dimensional varifolds supported in $M$; $\Z_n(M, \gamma)$ of codimension one integral currents $T$ in $M$ with $spt(\partial T)\subset \gamma$. We follow the notation from \cite{MN-willmore, pitts, simon} when we deal with currents, varifolds, and their basic operations. For instance, $\M(T)$ and $|T|$ denote the mass and varifold induced by a current $T$, and $||V||$ denotes the weight of a varifold $V$.

We use $\Flat_M(T-S)$ to denote the flat distance in $M$ between $S, T \in I_k(M)$; more precisely, it is the number defined by
\begin{equation*}
\inf\{\M(P) + \M(Q) : T-S = P+\partial Q, P\in I_k(M) \text{ and }Q\in I_{k+1}(M)\}.
\end{equation*}
If $U\subset M$ is relatively open, and $V, W\in \V_k(M)$, we use $\F_U(V, W)$ to denote the varifold distance between the restrictions of $V$ and $W$ to the Grassmannian of unoriented $k$-planes over $U$, $G_k(U)$, see page 66 of \cite{pitts}. We also use $\F(S,T)$ to denote the $\F$-metric for rectifiable currents; i.e., $\F(S,T) = \Flat_M(T-S) + \F(|S|, |T|)$.

Although we consider maps into a different space of integral currents, $\Z_n(M, \gamma)$, their discrete domains are the same of those in \cite{MN-willmore, pitts}. We follow the usual cell complexes notation from Section 7.1 of \cite{MN-willmore}. Let us briefly recall that notation now. We use $I(1,k)$ to denote the cell complex supported in $I = [0,1]$ whose vertices are the numbers  $0, 3^{-k}, 2\cdot 3^{-k}, \ldots, (3^k-1)\cdot 3^{-k}, 1$, and the $1$-cells are represented by the intervals $[(j-1)\cdot 3^{-k}, j\cdot 3^{-k}]$, for all $j = 1, 2, \ldots, 3^k$. For every $m\in \N$, we consider the product cell complex $I(m,k)$ defined by $I(1,k)^m = I(1,k)\otimes\cdots\otimes I(1,k)$. We also use $I(m,k)_q$ to denote the collection of all $q$-cells of $I(m,k)$.

\subsection{Min-max definitions}\label{subsect-minmax}
In this subsection, we present the setting in which we prove our min-max existence theorem. 

The mass fineness of a discrete map $\varphi : I(m,j)_0 \rightarrow \Z_n(M, \gamma)$, denoted by $\f (\varphi)$ or $\M$-fineness, is defined by the same expression as in 7.2 of \cite{MN-willmore}.

Let us fix $P_0 : \partial I^m \rightarrow \Z_n(M, \gamma)$, a map continuous with respect to the $\F$-metric and such that $\partial P_0(x)= \gamma$, for every $x\in \partial I^m$.

\begin{defi}\label{homotopic}
Let $\varphi_j : I(m,k_j)_0 \rightarrow \Z_n(M, \gamma)$, $j=1,2$, and $\delta>0$. We say that $\varphi_1$ and $\varphi_2$ are $m$-homotopic in $(\Z_n(M, \gamma), P_0)$ with $\M$-fineness $\delta$ if there exists $\Psi : I(1,k)_0 \times I(m,k)_0 \rightarrow \Z_n(M, \gamma)$ with $\f (\Psi) < \delta$, and satisfying
\begin{enumerate}
\item[(a)] $\Psi (0, \cdot) = \varphi_1\circ \n (k,k_1)$ and $\Psi (1, \cdot) = \varphi_2\circ \n (k,k_2)$;

\item[(b)] $\Flat (\Psi(t,x) - P_0(x)) \leq \delta$ and $\M (\Psi(t,x)) \leq \M (P_0(x)) + \delta$, for every $t \in I(1,k)_0$ and $x \in I_0(m,k)_0 = I(m,k)_0 \cap \partial I^m$.
\end{enumerate} 
\end{defi}

The motivation for the above definition is to consider discrete maps that agree with $P_0$ at boundary vertices, and that are finer and finer with respect to the mass norm. Since $P_0$ is continuous in the $\F$-metric only, we allow the maps to assume values that are only close to those of $P_0$. The maps $\n(k, k_j) : I(m,k)_0\rightarrow I(m,k_j)_0$ are the nearest point grid maps, see Section 7.1 of \cite{MN-willmore} for a precise definition.  

\begin{defi}\label{sweepout}
An $(m,\M)$-homotopy sequence of maps into $(Z_n(M,\gamma), P_0)$ is a sequence $\{\varphi_i\}$ of maps from finer and finer $m$-grids $I(m,k_i)_0 = dmn(\varphi_i)$ into $\Z_n(M, \gamma)$, with $k_i$ increasing to $\infty$, such that $\varphi_i$ is $m$-homotopic to $\varphi_{i+1}$ in $(\Z_n(M,\gamma),P_0)$ with $\M$-fineness tending to zero, and such that there exists $C = C(\{\varphi_i\})>0$ for which $\M(\varphi_i(x))\leq C$, for all $i\in \N$ and $x \in dmn(\varphi_i)$. 
\end{defi} 

We observe that if $\{\varphi_i\}$ is an $(m,\M)$-homotopy sequence of maps into $(Z_n(M,\gamma), P_0)$, then $\partial \varphi_i(x)=\gamma$, for large $i$ and every $x$ in the domain of $\varphi_i$. In order to justify this claim, we apply the above definitions to obtain a sequence $\delta_i \searrow 0$ for which $\f(\varphi_i) \leq \delta_i$, and $\Flat (\varphi_i(x) - P_0(x)) \leq \delta_i$, for all $x$ in $(\partial I^m) \cap dmn(\varphi_i)$. The end of the argument consists of two straightforward applications of the following observation.

\begin{lemma}\label{lemm.bdry.gamma}
There exists $\delta >0$ with the following property: if $S,T \in \Z_n(M,\gamma)$ are such that $\partial S = \gamma$ and $\Flat_M (T-S)< \delta$, then $\partial T = \gamma$. 
\end{lemma}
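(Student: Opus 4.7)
The plan is to show that the boundary of any $T\in\Z_n(M,\gamma)$ is automatically an integer linear combination of the components of $\gamma$, and then to use a flat-norm lower bound to force the coefficient on each component to equal one, so that $\partial T=\gamma$.

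First, since $T\in\Z_n(M,\gamma)$, the boundary $\partial T$ is an integral $(n-1)$-current supported in $\gamma$. Because $\partial\partial T=0$ and $\partial\gamma=0$ (as $\gamma$ is a closed submanifold), the difference $R:=\partial T-\gamma$ is supported in $\gamma$ and has zero boundary. Applying the constancy theorem on each connected component $\gamma_\alpha$ of $\gamma$ yields integers $n_\alpha\in\Z$ with $R=\sum_\alpha n_\alpha\gamma_\alpha$.

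Next, I would observe that the boundary operator is $1$-Lipschitz in the flat norm: given a decomposition $T-S=P+\partial Q$ with $P\in I_n(M)$, $Q\in I_{n+1}(M)$ and $\M(P)+\M(Q)<\delta$, one has $R=\partial(T-S)=\partial P$, and using $R=0+\partial P$ in the definition of $\Flat_M(R)$ produces $\Flat_M(R)\leq \M(P)<\delta$.

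Finally, I would show that the lattice $\{\sum_\alpha n_\alpha \gamma_\alpha:n_\alpha\in\Z\}$ is discrete in the flat topology, with $0$ as an isolated point. For each component $\gamma_\alpha$, pick a tubular neighborhood $U_\alpha$ of $\gamma_\alpha$ in $\R^N$ disjoint from the other components, with nearest-point projection $\pi_\alpha:U_\alpha\to\gamma_\alpha$; let $\mu_\alpha$ be the unit-volume form on $\gamma_\alpha$, and let $\chi_\alpha$ be a smooth cutoff equal to $1$ on $\gamma_\alpha$ and compactly supported in $U_\alpha$. The $(n-1)$-form $\omega_\alpha:=\chi_\alpha\pi_\alpha^\ast\mu_\alpha$, extended by zero to $\R^N$, satisfies $\int_{\gamma_\alpha}\omega_\alpha=1$ and $\int_{\gamma_\beta}\omega_\alpha=0$ for $\beta\neq\alpha$, and both $\|\omega_\alpha\|_\infty$ and $\|d\omega_\alpha\|_\infty$ are bounded by a constant $c=c(\gamma,M)$. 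By the dual characterization of the flat norm, $\Flat_M(R)\geq \Flat_{\R^N}(R)\geq c^{-1}|n_\alpha|$ for every $\alpha$, so choosing $\delta<1/c$ forces every $n_\alpha$ to vanish, i.e.\ $\partial T=\gamma$. The only mildly non-routine step is the construction of the forms $\omega_\alpha$, which is standard once a tubular neighborhood is fixed and which I do not expect to be a real obstacle.
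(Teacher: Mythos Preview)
Your proof is correct and follows essentially the same route as the paper's: both reduce, via the Constancy Theorem, to writing $\partial T-\gamma$ as an integer combination of the components of $\gamma$, and then test against the volume form of each component to see that a small flat norm forces every coefficient to vanish. The paper packages this as a contradiction argument using weak convergence, whereas you give the direct quantitative version via the dual characterization of the flat norm; the underlying idea is identical.
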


\begin{proof}
Suppose, by contradiction, that there is no such $\delta >0$. Then, we can find sequences $\{S_i\}_{i\in \N}$, $\{T_i\}_{i\in \N}$, with $\partial S_i = \gamma$ and $\partial T_i \neq \gamma$, for all $i \in \N$, and $\Flat_M(T_i-S_i) \rightarrow 0$. Let $R_i = \partial (T_i-S_i) = \partial T_i - \gamma$. Since the $(n-1)$-boundary $R_i$ is supported on the $(n-1)$-dimensional closed, oriented, submanifold $\gamma$, we know that each $R_i$ is the rectifiable current induced by some linear combination of the connected components of $\gamma$ with integer coefficients. This is an application of the Constancy Theorem. 

Evaluating $R_i$ at the volume form of any component of $\gamma$, we obtain an integer multiple of the $(n-1)$-volume of this component. Using that $\Flat_M(S_i-T_i)\rightarrow 0$ implies $\Flat_M(R_i)\rightarrow 0$, and that the latter implies that $R_i$ weakly converges to the zero current, we conclude that the coefficients of $R_i$ vanish for large $i\in \N$. In conclusion, $\partial T_i - \gamma = R_i = 0$, for large $i$, which is a contradiction. This proves the lemma.
\end{proof}

\begin{rmk}
The assumption that $\partial S = \gamma$ in the statement of Lemma \ref{lemm.bdry.gamma} can be relaxed. The same argument proves the existence of a positive $\delta$ for which: if $S,T \in \Z_n(M,\gamma)$ are such that $\Flat_M (T-S)< \delta$, then $\partial S = \partial T$. 
\end{rmk}

In the present setting, we observe a property of the boundary values of $\{\varphi_i\}$, an $(m,\M)$-homotopy sequence of maps into $(Z_n(M,\gamma), P_0)$, analogous to that noted in Lemma 7.8 of \cite{MN-willmore} in the case of sweepouts by cycles. Namely, if we use $dmn(\varphi_i)$ to denote the domain of the map $\varphi_i$, then
\begin{equation}\label{brdy-values-sweepouts}
\lim_{i\rightarrow \infty} \sup\{\F(\varphi_i(x), P_0(x)): x \in dmn(\varphi_i)\cap \partial I^m\} = 0.
\end{equation}
This property follows from the fact that $P_0$ is continuous in the $\F$-metric topology, item (b) in definition \ref{homotopic}, and Lemma \ref{lemma4.1ofMN}.

Similarly to Definition 7.9. in \cite{MN-willmore}, we consider an equivalence relation in the set of $(m,\M)$-homotopy sequences of maps into $(Z_n(M,\gamma), P_0)$, and use $\Pi_m^{\#}(Z_n(M,\gamma), P_0)$ to denote the set of all equivalence classes.

Consider $\Pi \in \Pi_m^{\#}(Z_n(M,\gamma), P_0)$. For every $\{\varphi_i\}_{i\in \N} \in \Pi$, we consider
\begin{equation}
L(\{\varphi_i\}) = \limsup_{i\rightarrow \infty} max\{\M(\varphi_i(x))\},
\end{equation}
where the maximum is taken over all $x$ in the domain of $\varphi_i$. Then, we use these quantities to define the min-max invariant of the class $\Pi$, namely
\begin{equation}
\LL (\Pi) = \inf\{ L(\{\varphi_i\}) : \{\varphi_i\} \in \Pi\}.
\end{equation}
Similarly to the definitions in 7.12 of \cite{MN-willmore}, we consider the compact subsets $K(\{\varphi_i\})$ of limit varifolds $V = \lim_j |\varphi_{i_j}(x_j)|$ with $i_j \rightarrow \infty$, and, in case $\{\varphi_i\}$ is a critical sequence (i.e., $L(\{\varphi_i\}) = \LL (\Pi)$), the critical set 
\begin{equation*}
Crit(\{\varphi_i\}) = K(\{\varphi_i\})\cap \{V : ||V||(M) = \LL (\Pi)\}.
\end{equation*}
Critical sequences always exist. The verification of this fact is analogous to the proof of Lemma 15.1 of \cite{MN-willmore}.

\section{Interpolation results}\label{sect-interpolation}

In this section, we adapt to the present setting the interpolation results of the min-max theory of Almgren and Pitts, which are key technical tools of the subject. In the first two subsection, \ref{interpolation1} and \ref{Sec.discretization}, we present version for currents with non-vanishing fixed boundary of the main theorems of sections 13 and 14 of \cite{MN-willmore}. In subsection \ref{sect-local-interpolation}, we explain an extension of the local interpolation lemma, Lemma 3.8 of \cite{pitts}. The latter requires more work, because we need to consider a situation in which concentration of mass is possible. This difficulty is discussed in details in the beginning of \ref{sect-local-interpolation}.



\subsection{Discrete to continuous}\label{interpolation1}

Let us start by recalling the interpolation from discrete to continuous maps. In his paper on homotopy groups of integral cycle groups, \cite{alm1}, Almgren presented a version of such interpolation result. That version was improved by Pitts \cite{pitts} to be applied in his general existence theorem, and later by Marques and Neves \cite{MN-willmore} in their proof of the Willmore Conjecture.

In what follows, we use $\delta_0$ to denote the positive constant which allows us to apply Corollary 1.14 of \cite{alm1}; see also Section 14 of \cite{MN-willmore}.

\begin{thm}\label{interpolation}
Let $M^{n+1}$ be a closed Riemannian manifold. There exists $C = C(M,m)>0$ such that: any discrete map $\psi : I(m,0)_0 \rightarrow \Z_n(M)$ with $\M$-fineness $\f (\psi)< \delta_0$, can be extended to a map $\tilde{\psi} : I^m \rightarrow \Z_n(M;\M)$ continuous in the mass norm. Moreover, we have that:
\begin{itemize}
\item the values of $\psi$ on the vertices of a $p$-cell $\alpha$ determine $\tilde{\psi}_{|\alpha}$;

\item if $Q(\alpha)$ denotes a $\Flat$-isoperimetric choice for $\partial \alpha$, $\alpha \in I(m,0)_1$, then
\begin{equation*}
\M(\tilde{\psi}(x) - \tilde{\psi}(y)) \leq C \sup \{\M(\partial Q(\alpha))\},
\end{equation*}
where the supremum is taken over all $1$-cells of $I(m,0)$.
\end{itemize}
\end{thm}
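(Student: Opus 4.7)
The plan is to follow the standard Almgren--Pitts interpolation scheme, in the form presented as Theorem 14.1 of \cite{MN-willmore}: the statement here is the classical version for a closed ambient with no boundary constraint, so no new ingredient beyond what appears there is needed. The argument proceeds by induction on the skeleton dimension of the cube $I^m$, extending $\psi$ from the $0$-skeleton to $I^m$ one dimension at a time while keeping track of mass oscillations.

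For the base case, fix a $1$-cell $\alpha \in I(m,0)_1$ with endpoints $x,y$. The $\M$-fineness hypothesis gives $\M(\psi(y)-\psi(x))<\delta_0$, hence $\Flat_M(\psi(y)-\psi(x))<\delta_0$, and Corollary 1.14 of \cite{alm1} produces the isoperimetric filling $Q(\alpha)\in I_{n+1}(M)$ with $\partial Q(\alpha)=\psi(y)-\psi(x)$ and $\M(Q(\alpha))$ controlled in terms of $\M(\partial Q(\alpha))$. I would convert this filling into a mass-continuous path $\tilde\psi_\alpha:\alpha\to\Z_n(M;\M)$ by slicing $Q(\alpha)$ along the level sets of a generic smooth function $f$ on $M$ and setting $\tilde\psi_\alpha(t)=\psi(x)+\partial(Q(\alpha)\llcorner\{f\le t\})$ after reparametrization, so that the total mass oscillation along $\alpha$ is bounded by a constant times $\M(\partial Q(\alpha))$.

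For the inductive step, assume $\tilde\psi$ has been defined on the $(p-1)$-skeleton, mass-continuous, with oscillation on each $(p-1)$-cell controlled by a constant $C_{p-1}$ times $\sup\{\M(\partial Q(\alpha)):\alpha\in I(m,0)_1\}$. Given a $p$-cell $\beta$, the restriction $\tilde\psi_{|\partial\beta}$ is a continuous map from the topological $(p-1)$-sphere $\partial\beta$ into $\Z_n(M;\M)$ whose image has small mass oscillation. The quantitative Almgren extension theorem for maps into $\Z_n(M;\M)$, again Corollary 1.14 of \cite{alm1} applied at the appropriate level, produces a continuous extension $\tilde\psi_\beta:\beta\to\Z_n(M;\M)$ whose mass oscillation on $\beta$ is at most a universal multiple of the oscillation on $\partial\beta$. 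This yields $C_p=\kappa_p\cdot C_{p-1}$, and after at most $m$ inductive steps one obtains the constant $C=C(M,m)$ claimed in the statement. The fact that the values on the vertices of a $p$-cell determine $\tilde\psi_{|\alpha}$ is built into the construction, since each extension uses only the filling data on the cell.

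The main obstacle, and the reason the argument is more than a formality, is tracking that at every skeleton dimension the accumulated mass oscillation still lies below the Almgren isoperimetric threshold $\delta_0$, so that the next application of Corollary 1.14 of \cite{alm1} is legitimate. This bookkeeping is standard but delicate; in our closed-manifold setting, the target space $\Z_n(M)$ and the cubical domain $I^m$ are identical to those treated in Sections 13 and 14 of \cite{MN-willmore}, so the estimates can be adopted verbatim, with the only real work being the verification that the constant $C$ produced by the inductive procedure depends only on $M$ and $m$ and not on the particular sweepout $\psi$.
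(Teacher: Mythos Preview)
The paper does not give its own proof of this statement: Theorem~\ref{interpolation} is stated as a known result from the literature (attributed to Almgren \cite{alm1}, Pitts \cite{pitts}, and Marques--Neves \cite{MN-willmore}) and is used only as a black box to derive the boundary version, Theorem~\ref{interpolation_boundary}. Your proposal correctly identifies the statement with Theorem~14.1 of \cite{MN-willmore} and sketches the skeleton-by-skeleton extension carried out there, so your approach matches the source the paper cites.

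One point of imprecision worth flagging: in the inductive step you describe extending $\tilde\psi|_{\partial\beta}$ across a $p$-cell $\beta$ by invoking ``Corollary~1.14 of \cite{alm1} applied at the appropriate level.'' That corollary is an isoperimetric lemma for a single cycle, not an extension theorem for maps from $(p-1)$-spheres into $\Z_n(M;\M)$. The actual mechanism in \cite{MN-willmore}, Section~14, is a concrete cell-by-cell construction that combines the isoperimetric choices on $1$-cells with explicit slice and cone formulas to fill higher cells; the mass-continuity and oscillation bound are read off from those formulas rather than from an abstract extension principle. Since you ultimately defer the bookkeeping to that section this is not a genuine gap, but the sentence as written misidentifies the tool.
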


The version of this theorem in Almgren's paper is weaker than this one because it extends discrete maps to maps that are continuous in the weaker flat topology. 
Almgren's result is obtained as a combination of Theorem 2.4 and Section 6.5 of \cite{alm1}. On the other hand, observe that Almgren states his theorems for maps with values in spaces of currents that are slightly more general, the spaces $\Z_k(A,B)$ of integral currents in $A$ with boundary supported in $B$. Using $A = M$, a manifold with boundary, and $B = \gamma^{n-1}$, a submanifold of $\partial M$, we obtain an interpolation result for currents with boundary $\gamma$ similar to Theorem \ref{interpolation}.

Before stating the theorem, we observe that the number $\delta_0 = \delta_0(M, \gamma)>0$ that appears in the statement, analogous to the constant $\delta_0$ of Theorem \ref{interpolation}, is the one given by Theorem 2.4 of \cite{alm1}; i.e., $\nu_{A,B}$ for our choices of $A$ and $B$.

\begin{thm}\label{interpolation_boundary}
Let $M$ be a compact Riemannian $(n+1)$-manifold with boundary, and $\gamma$ be an embedded closed oriented hypersurface of $\partial M$. There exists $C >0$ such that: any discrete map $\psi : I(m,0)_0 \rightarrow \Z_n(M,\gamma)$ with $\M$-fineness $\f (\psi)< \delta_0$, and constant boundary values
\begin{equation*}
\partial \psi (x) = \gamma, \text{ for all } x \in I(m,0)_0,
\end{equation*}
 admits a continuous extension to a map $\tilde{\psi} : I^m \rightarrow \Z_n(M,\gamma;\M)$ from the $m$-dimensional cube and with constant boundary values ($\partial \tilde{\psi} (\cdot) = \gamma$). 
\end{thm}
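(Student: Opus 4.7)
The strategy is to follow the same scheme used for Theorem \ref{interpolation} in the cycle case, but applied to Almgren's relative integral current space $\Z_n(A,B)$ with $(A,B)=(M,\gamma)$. Almgren's Theorem 2.4 of \cite{alm1} is stated precisely for this relative setting and provides, for any discrete map $\psi:I(m,0)_0\to \Z_n(M,\gamma)$ with $\M$-fineness below $\delta_0=\nu_{M,\gamma}$, a continuous extension $\tilde\psi:I^m\to \Z_n(M,\gamma)$ in the flat topology. The upgrade from flat-continuity to mass-continuity is carried out exactly as in Pitts's Section 6 of \cite{pitts} and in Section 14 of \cite{MN-willmore}: one makes compatible isoperimetric choices $Q(\alpha)\in I_{n+1}(M)$ whose boundaries are $\psi(x)-\psi(y)$ for adjacent vertices of each cell $\alpha$, and then performs the subdivision and reconstruction step that prevents mass concentration as one crosses lower-dimensional subcells.

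The reason this upgrade goes through verbatim in our setting is that the relevant isoperimetric choices are applied to actual $n$-cycles. Indeed, because the boundary values are constant, $\partial(\psi(x)-\psi(y))=\gamma-\gamma=0$, so each difference along adjacent vertices is an honest $n$-cycle in $M$ whose mass is bounded by $\f(\psi)<\delta_0$. The constant $\delta_0=\nu_{M,\gamma}$ is produced by Almgren's isoperimetric theorem for the pair $(M,\gamma)$ and yields fillings in $I_{n+1}(M)$ with controlled mass. From there, the recursive construction on each skeleton and the mass estimates $\M(\tilde\psi(x)-\tilde\psi(y))\leq C\sup\M(\partial Q(\alpha))$ proceed exactly as in the closed case, producing an $\M$-continuous extension $\tilde\psi:I^m\to \Z_n(M,\gamma;\M)$.

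Finally, we verify the constant boundary condition $\partial\tilde\psi(x)=\gamma$ for every $x\in I^m$. By the remark following Lemma \ref{lemm.bdry.gamma}, if $S,T\in \Z_n(M,\gamma)$ satisfy $\Flat_M(T-S)<\delta$ then $\partial T=\partial S$. Combined with the $\M$-continuity (hence $\Flat$-continuity) of $\tilde\psi$ and connectedness of $I^m$, this forces $\partial\tilde\psi$ to be constant on $I^m$; since $\tilde\psi$ agrees with $\psi$ on $I(m,0)_0$, that common boundary is $\gamma$. The main obstacle is not conceptual but bookkeeping: one must verify that every step of the Pitts/Marques--Neves mass-continuity argument, originally written for cycles in a closed ambient, transfers to relative cycles in our compact manifold with boundary. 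This is the case because each isoperimetric filling is applied to a true cycle of sub-critical mass, so the only ambient fact we actually need is an isoperimetric inequality for small cycles in $M$, which is precisely the statement $\nu_{M,\gamma}>0$ provided by Theorem 2.4 of \cite{alm1}.
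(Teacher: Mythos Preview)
Your proposal is correct and follows essentially the same approach as the paper: combine Almgren's relative interpolation (Theorem 2.4 of \cite{alm1}) for the pair $(M,\gamma)$ with the Marques--Neves mass-continuity upgrade, noting that the differences $\psi(x)-\psi(y)$ are genuine cycles so the isoperimetric step goes through, and then apply Lemma \ref{lemm.bdry.gamma} (or its subsequent remark) together with continuity to pin down $\partial\tilde\psi\equiv\gamma$. The paper's own argument is precisely this, summarized in the paragraph following the theorem.
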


\begin{rmk}
The properties of $\tilde{\psi}$ mentioned at the end of Theorem \ref{interpolation} also hold for the extension obtained on Theorem \ref{interpolation_boundary}. 
\end{rmk}

Theorem \ref{interpolation_boundary} is a combination of Theorem \ref{interpolation} and Almgren's version of that result. Actually, their combination implies the existence of an extension of $\psi$ to a continuous map $\tilde{\psi} : I^m \rightarrow \Z_n(M,\gamma;\M)$. In particular, it is already known that the support of the boundary of $\tilde{\psi}(x)$ is contained in $\gamma$, for all $x \in I^m$. It remains only to verify that the above inclusion is an equality. But this is a straightforward application of Lemma \ref{lemm.bdry.gamma}.

Summarizing, Almgren's version of this type of interpolation takes care of the necessary operations involving the boundary and of the extension, continuity with respect to the mass norm follows the same steps as in the argument of Marques and Neves, and the Constancy Theorem is applied, via Lemma \ref{lemm.bdry.gamma}, to assure that all currents that appear have boundary $\gamma$.


\subsection{Discretization of maps}\label{Sec.discretization}

The second interpolation theorem that we review and extend is used to produce fine discrete maps corresponding to maps that are continuous with respect to the flat norm topology. 

Let $\tilde{M}$ be a closed $(n+1)$-dimensional submanifold of $\R^N$, $A$ be a compact Lipschitz neighborhood retract, abbreviated CLNR, subset of $\subset \tilde{M}$, and $B\subset A$ be a closed, embedded, smooth submanifold of $\tilde{M}$. For a precise definition of CLNR sets see Definition 1.1 of \cite{alm1}. 

In what follows, assume that $A=M$ is a compact $(n+1)$-dimensional manifold with boundary contained in $\tilde{M}$, and $B = \gamma^{n-1}$ is a submanifold of $\partial M$, as in the statement of Theorem \ref{teorema-A}. Let
\begin{equation*}
\phi : I^m \rightarrow \Z_{n}(A,B)
\end{equation*}
be continuous in the flat norm, $\Flat = \Flat_{A}$, and satisfying the following:
\begin{enumerate}
\item[(i)] restricted to $\partial I^m$, $\phi$ is continuous with respect to the $\F$-metric;

\item[(ii)] $ \sup \{\M(\phi(x)) : x \in I^m\} < \infty$, and
\item[(iii)] $\limsup_{r\rightarrow 0} \m(\phi,r) = 0$,
\end{enumerate}
where $\m(\phi,r) = \sup \{||\phi(x)||B_r(p) : p\in A \text{ and } x \in I^m\}$, and $B_r(p)$ are the geodesic open balls in the closed manifold $\tilde{M}$. Under conditions (i), (ii) and (iii), we have the following discretization result.

\begin{thm}\label{discretization}
There exist $\delta_i \searrow 0$, $\ell_i \nearrow +\infty$, $\varphi_i : I(m,k_i)_0 \rightarrow \Z_n(A,B)$, and $\Psi_i : I(1,k_i)_0\times I(m,k_i)_0 \rightarrow \Z_n(A,B)$, with $k_i \nearrow +\infty$ and such that
\begin{enumerate}
\item[(a)] $\Psi_i(0,x) = \varphi_i(x)$ and $\Psi_i(1,x) = \varphi_{i+1}(x)$, for all $x\in I(m,k_i)_0$;

\item[(b)] the $\M$-fineness of $\Psi_i$ satisfies $\f(\Psi_i) < \delta_i$;

\item[(c)] $\M (\varphi_i(y)) \leq \sup \{\M(\phi(x)) : x,y \in \alpha, \alpha \in I(m,\ell_i)_m \} + \delta_i$, for every $y$ in $I(m,k_i)_0$;

\item[(d)] $\M(\Psi_i(t,x)) \leq  \M(\phi(x)) + \delta_i$, for all $(t,x)$ in $I(1,k_i)_0 \times I_0(m,k_i)_0$;

\item[(e)] $\Flat (\Psi_i(t,x) - \phi (x)) \leq \delta_i$, for all $(t,x)$ in $I(1,k_i)_0 \times I(m,k_i)_0$.
\end{enumerate}
Moreover, if $\phi$ is continuous with respect to $\M$ on an $(m-1)$-face of $I^m$, then the $\varphi_i$ can be chosen such that they coincide with $\phi$ on that face. 
\end{thm}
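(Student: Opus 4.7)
My plan is to follow the discretization procedure of Marques and Neves (Theorem 13.1 of \cite{MN-willmore}) almost verbatim, substituting Almgren's isoperimetric inequality for the pair $(A,B) = (M, \gamma)$ (Theorem 2.4 of \cite{alm1}, which supplies the constant $\delta_0 = \nu_{A,B}>0$) for the absolute isoperimetric inequality used in the closed case, and appealing to Lemma \ref{lemm.bdry.gamma} to upgrade the conclusion ``boundary supported in $\gamma$'' to ``boundary equal to $\gamma$'' throughout.

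First I would exploit the hypotheses to extract the right quantitative modulus of continuity. Since $I^m$ is compact and $\phi$ is continuous in the $\Flat$-topology, it is uniformly flat-continuous on $I^m$, uniformly $\F$-continuous on $\partial I^m$ by (i), of bounded mass by (ii), and by (iii) its concentration modulus $\m(\phi,r)$ tends to zero as $r\to 0$. Choose $\delta_i \searrow 0$ and then $\ell_i \nearrow \infty$ so that on the scale $3^{-\ell_i}$ all three moduli are smaller than $\delta_i$. This determines the parameters in (b)--(e).

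Second I would define $\varphi_i$ at each vertex $x\in I(m,k_i)_0$ (with $k_i$ taken much larger than $\ell_i$, chosen at the end) by an Almgren-type selection: among all $T \in \Z_n(A,B)$ within flat distance $\delta_i$ of $\phi(x)$, pick one with mass at most $\sup_{\alpha}\M(\phi) + \delta_i$, where $\alpha\in I(m,\ell_i)_m$ is the $\ell_i$-scale cell containing $x$. This is possible because the no-concentration estimate lets a cutoff of $\phi(x)$ at scale $3^{-\ell_i}$ play the role of the candidate before applying the relative isoperimetric lemma, exactly as in Section 13 of \cite{MN-willmore}; the resulting filling current lies in $\Z_n(A,B)$ by the relative form of that lemma, and the bound (c) is built in. For adjacent vertices $x,y$ the triangle inequality gives $\Flat(\varphi_i(x)-\varphi_i(y)) = O(\delta_i)$, so a second application of Theorem 2.4 of \cite{alm1} produces a finite sequence of currents in $\Z_n(A,B)$ connecting $\varphi_i(x)$ to $\varphi_i(y)$ with consecutive flat distances below any prescribed $\M$-fineness and with masses bounded by $\sup_{[x,y]}\M(\phi)$ up to a correction controlled by $\m(\phi,3^{-\ell_i})$. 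Repeating this ``one-skeleton filling'' on the product grid $I(1,k_i)_0 \times I(m,k_i)_0$ yields $\Psi_i$ and gives (a), (b), (d), (e) simultaneously.

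Third, the only genuinely new step beyond \cite{MN-willmore} is guaranteeing the boundary constraint. Every current constructed above is by design $\Flat$-close (within $\delta_i$) to some value of $\phi$, which has boundary $\gamma$. Choosing $\delta_i$ below the threshold of Lemma \ref{lemm.bdry.gamma} forces every $\varphi_i(x)$ and every $\Psi_i(t,x)$ to have boundary exactly $\gamma$. The final assertion, that $\varphi_i$ may be taken to coincide with $\phi$ on an $(m-1)$-face where $\phi$ is $\M$-continuous, is obtained as in \cite{MN-willmore}: on that face one sets $\varphi_i$ equal to $\phi$ on vertices (no isoperimetric selection is required because the $\M$-modulus of $\phi$ already controls the mass), and the one-skeleton construction extends the result to neighbouring cells.

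The main obstacle I expect is a careful verification that Almgren's relative isoperimetric theorem applies to $(A,B)=(M,\gamma)$ --- both sets are CLNR and $\gamma\subset\partial M$ is a closed embedded submanifold, so the pair is admissible in the sense of \cite{alm1} and produces the required constant $\delta_0>0$ together with filling currents in $\Z_n(M,\gamma)$. Once this is granted, the combinatorial bookkeeping of Sections 13--14 of \cite{MN-willmore} transfers without change, and Lemma \ref{lemm.bdry.gamma} is what closes the boundary argument.
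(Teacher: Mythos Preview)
Your proposal is essentially correct and follows the same overall route as the paper: adapt the proof of Theorem 13.1 of \cite{MN-willmore}, with the only substantive modification occurring in the isoperimetric/cut-and-paste step of Lemma 13.4, and invoke Lemma \ref{lemm.bdry.gamma} to handle the boundary constraint.

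There is one organizational difference worth noting. You plan to use Almgren's \emph{relative} isoperimetric theorem (Theorem 2.4 of \cite{alm1}) for the pair $(M,\gamma)$ throughout, and then apply Lemma \ref{lemm.bdry.gamma} at the end to upgrade ``boundary supported in $\gamma$'' to ``boundary equal to $\gamma$''. The paper reverses this order: it first invokes the Constancy Theorem (via Lemma \ref{lemm.bdry.gamma}) to observe that any two currents $S,T\in\Z_n(M,\gamma)$ which are sufficiently $\Flat$-close satisfy $\partial S=\partial T$, so that their difference $S-T$ is a genuine \emph{cycle} in $\Z_n(A)$; one may then use the ordinary isoperimetric choice (Corollary 1.14 of \cite{alm1}) exactly as in \cite{MN-willmore}, with no relative machinery needed. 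The paper also remarks that the slicing in the cut-and-paste argument is carried out using distance functions on the ambient closed manifold $\tilde{M}\supset M$, so that all slices of the $(n+1)$-dimensional fillings automatically remain supported in $A$. Both arrangements lead to the same conclusion (indeed, since $\gamma$ is $(n-1)$-dimensional, the ``remainder'' term in the relative isoperimetric filling must vanish anyway), but the paper's ordering has the advantage of reducing immediately to the absolute case, so that the combinatorics of \cite{MN-willmore} transfer with no further bookkeeping.
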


The above result is the analog of Theorem 13.1 of \cite{MN-willmore} in our setting. There the authors consider maps taking values into the space of integral cycles supported on a closed manifold. Their proof can be adapted to justify the above statement in a straightforward manner. The main observation is that a manifestation of the Constancy Theorem, similar to that of Lemma \ref{lemm.bdry.gamma}, implies that besides the fact that the currents that we started with have boundary, $\Flat_A$-isoperimetric choices can be done as usual. The cut and paste construction using slicing theory can be done using distance functions on $\tilde{M}$. Moreover, estimates of $\Flat_{\tilde{M}}$ regarding the currents in question can be used as $\Flat_A$ estimates, because all currents that appear are supported in $A$. 

Next, we include a brief discussion on the proof of Theorem \ref{discretization} with some further details about the facts mentioned in the previous paragraph. As it was also remarked by other authors, see \cite{Li-Zhou}, the only step of the proof where Marques and Neves do explicit operations on integral cycles appears in Lemma 13.4 of \cite{MN-willmore}. There, they fix a current, that in our case is $T \in \Z_n(A,B)$, and consider maps from a fixed finite set with image into
\begin{equation*}
{\textbf B}^{\Flat}_{\varepsilon_j} (T) \cap \{S : \M(S)\leq 2L \},
\end{equation*}
for a sequence $\varepsilon_j \searrow 0$, where ${\textbf B}^{\Flat}_{r} (T)$ denotes the ball in $\Z_n(A,B)$ of radius $r$ with respect to the flat norm topology and center at $T$, and $L>0$ is fixed. Since the compact set $A$ is contained in $\tilde{M}$, we have that $\Flat_{\tilde{M}}\leq \Flat_A$. In particular,
convergence in $\Flat_A$ implies convergence in $\Flat_{\tilde{M}}$, which implies convergence as currents. Varifold limits of sequences of induced varifolds $|S_j|$, where $S_j \in {\textbf B}^{\Flat}_{\varepsilon_j} (T)$, are also supported on $A$. Then, the finite sequences $\{p_i\}$ and $\{r_i\}$ that appear in \cite{MN-willmore} page 748 can be chosen such that $p_i \in A$ and $B_{r_i}(p_i)$ are geodesic open balls in $\tilde{M}$, for every $i$. 

The $\Flat$-isoperimetric choices can still be performed using the same result, Corollary 1.14 of \cite{alm1}. Indeed, we observe that: if $S \in {\textbf B}^{\Flat}_{\varepsilon_j} (T)$, then $\partial S = \partial T$, for large $j\in \N$. This is a straightforward application of the Constancy Theorem, similar to that of Lemma \ref{lemm.bdry.gamma}. Hence, $\partial (S-T) = 0$, which implies that we can find an $\Flat_A$-isoperimetric choice $Q = Q(S-T) \in I_{n+1}(A)$ for $S-T \in \Z_n(A)$. The cutting operations that Marques and Neves do in the proof of Theorem 13.1, \cite{MN-willmore}, using slicing theory can be done in the present setting using distance functions of the closed manifold $\tilde{M}$. More precisely, they consider slices of isoperimetric choices, such as the current $Q$ above, and these are $(n+1)$-dimensional and supported on $A$.  It follows, by definition, that the supports of those slices are also contained in $A$. This explains why we can still perform the cut and paste argument to prove the analog of Lemma 13.4 and Proposition 13.3 from \cite{MN-willmore}. The remaining steps are successive applications of this proposition. Hence, Theorem \ref{discretization} follows.


\subsection{Local interpolation}\label{sect-local-interpolation}

In this subsection, $U \subset M$ denotes a relatively open subset and $C\subset M$ a compact set such that $U\subset C$. The following lemma plays a crucial role in the proof of the combinatorial argument. It is the analog in our setting to Lemma 3.8 in \cite{pitts}, and Lemma B.1 in \cite{Li-Zhou}.

\begin{lemma}\label{interp.flat-to-mass}
Consider $L>0$, $\delta >0$, $U\subset M$ relatively open, $K\subset U$ compact, and $T \in \Z_n(M, (M\setminus U)\cup \gamma)$ with $(\partial T)\llcorner U  = [|\gamma|]\llcorner U$. There exists $\varepsilon >0$ such that whenever
\begin{itemize}
\item $S_1, S_2 \in \mga$
\item $(\partial S_i) \llcorner U  = [|\gamma|] \llcorner U , i=1,2$
\item $\Flat_C(S_1-S_2) \leq \varepsilon$
\item $spt(S_1-T)\cup spt(S_2-T)\subset K$
\item $\M(S_1)\leq L, \M(S_2)\leq L$,
\end{itemize}
there is a finite sequence $T_0, T_1, \ldots, T_l \in \mga$ with $T_0 = S_1$, $T_l = S_2$, $(\partial T_j)\llcorner U = [|\gamma|]\llcorner U$, for every $j=1, \ldots, l$, and
\begin{equation*}
\bigcup_j spt(T_j-T)\subset U, \ \ \sup_j \M(T_j-T_{j-1})\leq \delta, \text{ and } \sup_j \M(T_j) \leq L+\delta. 
\end{equation*}
\end{lemma}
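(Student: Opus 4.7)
My plan is to produce the interpolating sequence by filling the difference $R := S_1 - S_2$ with a small-mass $(n+1)$-chain and then slicing that chain, in the spirit of Pitts' Lemma 3.8.

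First I would verify that $R$ is an integral $n$-cycle supported in $K$: the support condition $spt(R) \subset K$ follows from $spt(S_i - T) \subset K$, and the boundary hypotheses force $(\partial R)\llcorner U = 0$, which together with $spt(\partial R) \subset K \subset U$ gives $\partial R = 0$. With $\Flat_C(R) \leq \varepsilon$ and $\partial R = 0$ in hand, I would produce a filling $\tilde{Q} \in I_{n+1}(M)$ with $\partial \tilde{Q} = R$ and $\M(\tilde{Q}) \leq C \varepsilon$ by the standard trick: writing $R = A + \partial Q_0$ from the flat decomposition, observing that $A$ is a small-mass cycle, filling $A = \partial Q_1$ via the isoperimetric inequality, and setting $\tilde{Q} := Q_0 + Q_1$. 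A cut-off argument using $spt(R) \subset K$ places $spt(\tilde{Q})$ in a neighborhood $K^\eta \subset U$ of $K$.

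Next I would slice $\tilde{Q}$ by a Lipschitz function $f\colon M \to [0,1]$ with generic level sets, so that the pushforward $f_\# ||R||$ has no atoms. The coarea inequality $\int \M(\langle \tilde{Q},f,t\rangle)\, dt \leq \mathrm{Lip}(f)\, \M(\tilde{Q}) \leq C\varepsilon$, combined with non-atomicity, lets me pick $l$ large and levels $1 = t_0 > t_1 > \cdots > t_l = 0$ with
\[
\M(\langle\tilde{Q},f,t_j\rangle) \leq \delta/3 \quad \text{and} \quad ||R||(\{t_j < f \leq t_{j-1}\}) \leq \delta/3.
\]
Setting $T_j := S_1 - \partial(\tilde{Q}\llcorner\{f > t_j\})$, the slicing formula yields $T_j = S_1\llcorner\{f \leq t_j\} + S_2\llcorner\{f > t_j\} + \langle\tilde{Q},f,t_j\rangle$, from which $T_0 = S_1$, $T_l = S_2$, the increment $\M(T_j - T_{j-1}) \leq \delta$, the support condition $spt(T_j - T) \subset K^\eta \subset U$, and the boundary condition $(\partial T_j)\llcorner U = [|\gamma|]\llcorner U$ (via $\partial T_j = \partial S_1$ and Lemma \ref{lemm.bdry.gamma}) all follow.

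The hardest step will be the sharp mass bound $\M(T_j) \leq L + \delta$. The formula above only gives $\M(T_j) \leq 2L + \delta/3$ in general, so the slicing function $f$ must be chosen adapted to $\tilde{Q}$: heuristically $f$ should play the role of a height along the slab $\tilde{Q}$ so that $S_1\llcorner\{f \leq t_j\}$ and $S_2\llcorner\{f > t_j\}$ sit on complementary sides of the slicing level and their masses sum to no more than $\max(\M(S_1),\M(S_2))$ plus a negligible error. Following Pitts, I plan to realize this by first deforming $S_1$, $S_2$, and $\tilde{Q}$ to cubical approximants on a fine grid inside $K^\eta$ via the Federer--Fleming deformation theorem, and then performing a staircase interpolation cube by cube; each cube modification only adds the mass of a single vertical face of order $h^{n-1}$ (where $h$ is the grid scale), so every intermediate cubical chain has mass at most $\max(\M(S_1),\M(S_2)) + Ch^{n-1} \leq L+\delta$ for $h$ small. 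Translating the cubical interpolation back into $\Z_n(M,(M\setminus U)\cup\gamma)$ preserves the fixed boundary thanks again to Lemma \ref{lemm.bdry.gamma}.
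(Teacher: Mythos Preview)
Your filling-and-slicing outline produces a legitimate $\M$-fine interpolation with the right support and boundary, but the step you flag as hardest is a genuine gap and the proposed fix does not close it. First, the Federer--Fleming deformation only yields $\M(S_i^{\text{cub}})\le C(n)\,\M(S_i)$ with a dimensional constant $C(n)>1$, so passing to cubical approximants already forfeits the sharp bound $L$; moreover, interpolating from $S_i$ to $S_i^{\text{cub}}$ with mass $\le L+\delta$ and $\M$-increments $\le\delta$ is exactly the problem you are trying to solve, so the reduction is circular. Second, the ``staircase'' heuristic is not correct: removing $(n{+}1)$-cubes from $\tilde{Q}^{\text{cub}}$ one at a time changes the current by a cube boundary of mass $O(h^n)$, which controls $\M(T_j-T_{j-1})$ but not $\M(T_j)$; the exposed vertical $n$-faces accumulate, and nothing forces $\M\big(S_1^{\text{cub}}\llcorner\{f\le t\}\big)+\M\big(S_2^{\text{cub}}\llcorner\{f>t\}\big)\le L$ in general. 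The obstruction behind both issues is the possibility that $||S_i||$ concentrates mass near a point: no generic slicing or uniform grid sees this.

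The paper proceeds quite differently. It first uses $\Flat_C$-compactness of the admissible $S_1$'s to reduce to a \emph{fixed} $S_1$, then argues by contradiction with a sequence $R_i\to S_1$ in $\Flat_C$ for which no interpolation exists. The essential point (the analogue of Pitts' Lemma~3.7) is that one may assume $|R_i|\to|S_1|$ as varifolds; rectifiability of $|S_1|$ then rules out mass concentration and the standard cut-and-paste via isoperimetric choices and slicing gives the interpolation with mass $\le\M(S_1)+\delta\le L+\delta$. When instead $|R_i|\to V$ with $||V||\{q\}>\delta/5$ for some $q$, one first performs a \emph{conical replacement} in a small ball around $q$ to strictly decrease mass before reducing to the previous case. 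The new content here is that for $q\in\gamma$ this cone must be taken in coordinates adapted to $\gamma$ (neither geodesic balls nor Fermi half-balls preserve the constraint $(\partial T)\llcorner U=[|\gamma|]\llcorner U$); this is handled by the paper's Lemmas~\ref{conical-replacements} and~\ref{choice-p_i}. Your outline contains no mechanism to detect or discharge concentration, and without it the sharp bound $L+\delta$ is unreachable.
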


The proof of Lemma \ref{interp.flat-to-mass} is similar to the arguments of Lemma 3.8 in \cite{pitts}, and Lemma B.1 in \cite{Li-Zhou}. As in those cases, we observe that it suffices to prove the existence of such an $\varepsilon>0$ in the case that one of the currents $S_i$ is fixed. In other words, if we consider $S_1\in \mga$, with $(\partial S_1) \llcorner U  = [|\gamma|] \llcorner U$, $spt(S_1-T)\subset K$, and $\M(S_1)\leq L$, it is enough to prove the existence of an $\varepsilon>0$, possibly depending on $S_1$, for which the result holds. Indeed, this is enough to prove the lemma because the set of all such currents $S_1$ is compact with respect to the topology induced by $\Flat_C$.

If there is a finite sequence $T_0=S_1, T_1, \ldots, T_l=S$ as in the statement of the lemma, we say that one can interpolate between $S_1$ and $S$. For fixed $S_1$, the proof is an argument by contradiction. Consider a sequence $R_i \in \mga$, $i \in \N$, with $\Flat_C(R_i-S_1) \rightarrow 0$, satisfying the required boundary, support and mass constraints, and such that it is not possible to interpolate between $S_1$ and $R_i$. 

In \cite{pitts}, proof of Lemma 3.8, the author's contradiction argument assumes that the induced varifolds $|R_i|$ converge to $|S_1|$. That was possible because of his Lemma 3.7. Proving that this is also the case in our setting is the part of the argument that deserves more attention. Indeed, once we know that $|R_i|$ converges to $|S_1|$ as varifolds, it follows from the rectifiability of $|S_1|$ that there is no concentration of mass over this sequence. Therefore, the cut and paste argument performed in the rest of the proof of Lemma 3.8 of \cite{pitts}, also in Lemma 13.4 of \cite{MN-willmore}, can be similarly carried out as it was briefly mentioned after the statement of our Theorem \ref{discretization}. 

In the present argument, it is even easier to verify that one can consider the usual $\Flat$-isoperimetric choice for $R_i - S_1$ than it was in Section \ref{Sec.discretization}. Indeed, as in that case, it suffices  to check that $R_i-S_1$ is a cycle. Observe that $\partial (R_i -S_1)= \partial (R_i -T)+\partial (T-S_1)$ is supported in $K$, then
\begin{equation*}
\partial (R_i -S_1) = (\partial R_i -\partial S_1)\llcorner U = (\partial R_i)\llcorner U - (\partial S_1)\llcorner U =0,
\end{equation*}
since both terms in the last expression before the zero are equal to $[|\gamma|]\llcorner U$.

Then, it remains to adapt the interpolation arguments of Pitts' Lemma 3.7. Assume that $|R_i|$ converges to a varifold $V \in \V_n(M)$. The proof is divided into two cases based on the existence of points $q\in U\cap spt(||V||)$ for which $||V||\{q\} > \alpha = \delta/5$, where $\delta >0$ is part of the initial data.

In the first of these two cases, where one assumes that no such points exist, the proof is analogous to the usual cut and paste argument as described above. Therefore, its adaptation follows as before.

In the second case, we have that $|R_i|$ converges to $V$ and $||V||\{q\} > \alpha$, for some $q\in U\cap spt(||V||)$. In Lemma 3.7 of \cite{pitts}, this case is reduced to the first after an appropriate choice of geodesic balls $B_{r_i}(p_i)$, with $q \in B_{r_i}(p_i)$, $p_i \rightarrow q$, and $r_i\rightarrow 0$, and explicitly constructed interpolations between the $R_i$ and the currents given by
\begin{eqnarray*}
(R_i)\llcorner (M \setminus B_{r_i}(p_i)) + (exp_{p_i})_{\#}\left(\delta_0 \mathbb{X} (exp_{p_i}^{-1})_{\#}(\partial(R_i\llcorner B_{r_i}(p_i)))\right), 
\end{eqnarray*}
where $exp_{p_i}$ denotes the usual exponential map at $p_i$ and $\delta_0 \mathbb{X} S$ denotes the cone over $S$ in the Euclidean space, see topic 26.26 of \cite{simon}.

Our main difficulty was to deal with the construction of these conical replacements in the case $q \in \gamma$, and the corresponding interpolations. In other words, we need a result analogous to Lemma 3.5 in \cite{pitts}. This is the main missing ingredient at this point, since the choice of the domains $\tilde{B}_{r_i}(p_i)$ where we switch $R_i$ by its conical replacement follows from almost the same steps as in Pitts' argument in \cite{pitts} page 119. The only step of that part that remains to be adapted is where Lemma 3.6 of \cite{simon} is applied. These two issues are settled in our Lemmas \ref{conical-replacements} and \ref{choice-p_i}, respectively.

For obvious reasons we can not use the same construction at points $p_i \in \gamma$. Li and Zhou \cite{Li-Zhou} dealt with a similar difficulty in the case of relative cycles. There, they used Fermi coordinates to describe a neighborhood of $p_i \in \partial M$. Since these coordinates take values on a half-space, or even in a half-ball centered at the origin if one chooses the right neighborhood, it is possible to perform the cone construction in the Euclidean space and push it back to the manifold. After a delicate adaptation of Pitts' methods from 3.4-3.6, Li and Zhou succeeded in interpolating between the initial currents $R_i$ and
\begin{equation*}
(R_i)\llcorner (M \setminus \hat{B}_{r_i}(p_i)) + (\widehat{exp}_{p_i})_{\#}\left(\delta_0 \mathbb{X} (\widehat{exp}_{p_i}^{-1})_{\#}(\partial_1(R_i\llcorner \hat{B}_{r_i}(p_i)))\right),
\end{equation*}
where $\widehat{exp}_{p_i} = (x,t)$ denotes Fermi coordinates at $p_i \in \partial M$, $x = (x_1, \ldots, x_n)$ represents geodesic normal coordinates of $\partial M$ at $p$, and the coordinate $t$ is the distance to $\partial M$ in $M$. The domain $\hat{B}_{r_i}(p_i)$ is the sub-level $\{\hat{r}_{p_i}< r_i\}$ of the function $\hat{r}_{p_i} = \sqrt{|x|^2+t^2}$, and $\partial_1(R_i\llcorner \hat{B}_{r_i}(p_i))$ denotes the restriction of $\partial (R_i\llcorner \hat{B}_{r_i}(p_i))$ to the level set $\{\hat{r}_{p_i}=r_i\}$. See Lemma B.8 in \cite{Li-Zhou}.

Unfortunately, we can not use Li-Zhou's construction of conical replacements because Fermi coordinates do not preserve our boundary constraint $(\partial R_i)\llcorner U = [|\gamma|]\llcorner U$. 

To fix that issue, we introduce the following system of coordinates near $p\in \gamma$. We use $E_p = (x,s,t)$, where $x=(x_1, \ldots, x_{n-1})$ are geodesic normal coordinates for $\gamma$ at $p$, the coordinate $t$ represents distance to $\partial M$ in $M$, and $s=s(y)$ is a signed distance function to $\gamma$ in $\partial M$ applied to $\pi_{\partial M}(y)$. Here, we use $\pi_{\partial M}$ to denote the nearest point projection to $\partial M$ of points $y\in M$ near $p$. If $N(x)$ denotes the unit normal vector to $\gamma$ in $\partial M$ at the point of coordinates $(x,0,0)$, and $exp_{x}^{\partial M}$ is the usual exponential map of $\partial M$ at $(x,0,0)$, then $exp_{x}^{\partial M}(sN(x))$ is the point of coordinates $(x,s,0)$. 

Letting $\nu(x,s)$ denote the inward pointing unit normal to $\partial M$ at $(x,s,0)$, and $exp_{(x,s)}^M$ the usual exponential map of $M$ at $(x,s,0)$, one can easily check that the coordinates $(x,s,t)$ describes $exp_{(x,s)}^M (t \nu(x,s))$.

We use $E_p(x,s,t)$ to denote the point in $M$ with coordinates $(x,s,t)$. Using the canonical identification between $T_pM$ and the Euclidean half-space $\R^{n+1}_{+}$, we can assume that $E_p$ is defined on a neighborhood of the origin $0_p$ of the half-space $T_pM$, and takes values in $M$. As in the case of the usual exponential map, we have that $D(E_p) (0_p)$ equals the identity of $T_pM$. Then, it follows that $E_p$ is really a local diffeomorphism near $0_p$.

Consider the function $r_p(x,s,t) = \sqrt{|x|^2+s^2+t^2}$, defined on the domain parametrized by $E_p$, where $|x|^2 = x_1^2+\ldots +x_{n-1}^2$. We use the following notation for sub-level and level sets of this function: 
\begin{equation*}
\B_{r}(p) = \{r_p(x,s,t) < r\} \text{ and } \S_{r}(p) = \{r_p(x,s,t) = r\}.
\end{equation*}

We are now ready to state the version of the interpolation result to conical replacements adapted to the fixed boundary setting.

\begin{lemma}\label{conical-replacements}
For every $q \in \gamma\cap U$ and $0< \varepsilon <1$ there exists a relatively open set $Z$, with $q\in Z \subset U$, such that: given

\begin{enumerate}
\item[(a)] $p\in \gamma \cap Z$ and $r>0$, such that $\B_r(p)\subset\subset Z$

\item[(b)] $\delta >0$ and $T \in \Z_n(clos(\B_r(p)), \gamma \cup \S_r(p))$, such that
\begin{eqnarray*}
\left.\begin{aligned} 
\nonumber & (\partial T) \llcorner \B_r(p) = [|\gamma |] \llcorner \B_r(p) \\
\nonumber & 2 \M (  (\partial T)\llcorner \S_r(p) ) rn^{-1} + \delta \leq \M (T) \\
\nonumber & \varepsilon \M (T) \leq n \delta \\
\nonumber & ||T||(\S _{\rho}(p)) = 0, \text{ for all } 0 \leq \rho \leq r.
\end{aligned}\right.
\end{eqnarray*}
\end{enumerate}
Then, for every $\beta >0$ there exists a finite sequence $T=P_0, P_1,\ldots, P_m $ in $\Z_n(clos(\B_r(p)), \gamma \cup \S_r(p))$ such that
\begin{eqnarray*}
\left.\begin{aligned} 
\nonumber & \partial P_j = \partial T \\
\nonumber & \M (P_j) \leq \M(T) + \beta \\
\nonumber & \M(P_j - P_{j-1}) \leq \beta \\
\nonumber & P_m = (E_p)_{\#} \left( \delta_0 \mathbb{X} (E_p^{-1})_{\#}((\partial T)\llcorner \S_r(p))\right)\\
\nonumber &\M(P_m) \leq 2r^{-1} \M ((\partial T)\llcorner \S_r(p)) \leq \M(T) - \delta.
\end{aligned}\right.
\end{eqnarray*}
\end{lemma}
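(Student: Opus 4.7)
The argument adapts Pitts' cone-replacement construction, Lemma 3.5 of \cite{pitts}, and its Fermi-coordinate version in Lemma B.8 of \cite{Li-Zhou}, with the decisive new feature being the coordinate system $E_p$ introduced above. Fermi coordinates would suffice if the only boundary constraint were $\partial M$, but they do not map $\gamma$ to a linear subspace through the origin, so the Euclidean cone over a current in $\gamma \cap \S_r(p)$ need not lie in $\gamma$; overcoming precisely this issue is the reason to use $E_p$, in which $\gamma$ is identified with $\{s=t=0\}$ and $\partial M$ with $\{t=0\}$, both linear subspaces passing through $0_p$. Once this geometric compatibility is in place, the interpolation goes through by essentially the same steps as in the cited references.

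First, I would choose the neighborhood $Z$ of $q$ small enough that for every $p \in \gamma \cap Z$ and every ball $\B_r(p) \subset\subset Z$, the chart $E_p$ is bi-Lipschitz with distortion $1+\eta$, where $\eta = \eta(\varepsilon, n)$ is a dimensional quantity fixed at the end of the argument. Such a $Z$ exists by smoothness of $p \mapsto E_p$ together with $D(E_p)(0_p) = \mathrm{id}$. Under this choice, $E_p^{-1}(\B_r(p))$ is the Euclidean half-ball $\{r_p < r\} \cap \R^{n+1}_+$, and the Euclidean cone $\delta_0 \mathbb{X}(E_p^{-1})_\#[|\gamma \cap \S_r(p)|]$ in the half-space coincides with $(E_p^{-1})_\#[|\gamma \cap \B_r(p)|]$ up to orientation, because $\gamma$ passes linearly through the origin in these coordinates.

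Next, construct the terminal current $P_m = (E_p)_\# \bigl(\delta_0 \mathbb{X}(E_p^{-1})_\#((\partial T)\llcorner \S_r(p))\bigr)$. Since $\partial T$ is a cycle with $(\partial T)\llcorner \B_r(p) = [|\gamma|]\llcorner \B_r(p)$, the slice satisfies $\partial((\partial T)\llcorner \S_r(p)) = [|\gamma \cap \S_r(p)|]$, and applying the cone identity $\partial(\delta_0 \mathbb{X} S) = S - \delta_0 \mathbb{X} \partial S$ together with the identification above yields $\partial P_m = \partial T$. The Euclidean cone inequality $\M(\delta_0 \mathbb{X} S) \leq r n^{-1} \M(S)$ combined with the bi-Lipschitz bound then produces $\M(P_m) \leq 2 r n^{-1} \M((\partial T)\llcorner \S_r(p)) \leq \M(T) - \delta$, the last inequality being hypothesis (b).

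Finally, the intermediate currents $P_1, \ldots, P_{m-1}$ are obtained by performing the same cone replacement at a fine sequence of radii $0 < \rho_1 < \cdots < \rho_{m-1} < r$: each $P_j$ is set equal to $T$ on $\{r_p > \rho_j\}$ and equal to the cone $(E_p)_\#\bigl(\delta_0 \mathbb{X}(E_p^{-1})_\# \langle T, r_p, \rho_j\rangle\bigr)$ on $\{r_p < \rho_j\}$. The radii are selected, via the coarea inequality and the hypothesis $\|T\|(\S_\rho(p))=0$ for every $\rho \in [0,r]$, so that the slice masses $\M(\langle T, r_p, \rho_j\rangle)$ are uniformly controlled by $2\M(T)/r$ and that neighbouring slicings yield comparable cones. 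The boundary identity $\partial P_j = \partial T$ follows exactly as for $P_m$, and the mass bounds $\M(P_j) \leq \M(T) + \beta$ and $\M(P_j - P_{j-1}) \leq \beta$ come from combining the cone inequality with the fineness of the partition, using the slack provided by the hypothesis $\varepsilon \M(T) \leq n\delta$ to absorb the bi-Lipschitz distortion $\eta$; the details of this estimate are routine adaptations of \cite[Lemma 3.5]{pitts} and \cite[Lemma B.8]{Li-Zhou}. The main obstacle is thus entirely concentrated in the compatibility of the Euclidean cone with the interior $\gamma$-boundary, which is exactly what motivated the choice of $E_p$.
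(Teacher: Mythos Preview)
Your identification of the decisive geometric point---that in the $E_p$ coordinates $\gamma$ becomes the linear subspace $\{s=t=0\}$, so the Euclidean cone over $(E_p^{-1})_\#[|\gamma\cap\S_r(p)|]$ is exactly $(E_p^{-1})_\#[|\gamma\cap\B_r(p)|]$ and hence $\partial P_m=\partial T$---is correct and is indeed the heart of the matter.

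However, your proposed interpolation scheme has a genuine gap. You define the intermediate $P_j$ by keeping $T$ on $\{r_p>\rho_j\}$ and replacing it by the cone over the slice $\langle T,r_p,\rho_j\rangle$ inside, and you assert that ``neighbouring slicings yield comparable cones''. This is precisely the step that fails: the difference
\[
P_{j}-P_{j-1}= -\,T\llcorner\{\rho_{j-1}<r_p<\rho_j\} \;+\;(E_p)_\#\bigl(\delta_0\,\mathbb{X}\,(E_p^{-1})_\#(\langle T,r_p,\rho_j\rangle-\langle T,r_p,\rho_{j-1}\rangle)\bigr)
\]
contains a cone over the difference of two slices, and the mass of that cone is controlled only by $r n^{-1}\bigl(\M\langle T,r_p,\rho_j\rangle+\M\langle T,r_p,\rho_{j-1}\rangle\bigr)$, which is of order $\M(T)/n$ and cannot be made smaller than an arbitrary $\beta$ by refining the partition. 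Likewise, your bound $\M(P_j)\leq\M(T)+\beta$ requires the cone inside $\B_{\rho_j}(p)$ to have mass at most $||T||(\B_{\rho_j}(p))+\beta$; a coarea choice controlling the slice mass by $2\M(T)/r$ gives a cone of mass $\lesssim 2\rho_j\M(T)/(nr)$, which need not be dominated by $||T||(\B_{\rho_j}(p))$ if $T$ happens to concentrate near $\S_r(p)$.

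The paper (following Pitts' Lemma 3.5 exactly) does \emph{not} interpolate by slicing alone. The essential tool is the dilation $\mu_\lambda(v)=\lambda v$ in the chart $E_p$: one forms the \emph{partial cone}
\[
S_\lambda=(E_p)_\#\Bigl(\delta_0\,\mathbb{X}\bigl[(E_p^{-1})_\#\tilde\partial T-(\mu_\lambda\circ E_p^{-1})_\#\tilde\partial T\bigr]\Bigr),
\qquad \tilde\partial T=(\partial T)\llcorner\S_r(p),
\]
and interpolates via the family $\lambda\mapsto S_\lambda+(E_p\circ\mu_\lambda\circ E_p^{-1})_\# T$. The mass control comes from the derivative estimate $\|\Lambda_n D(E_p\circ\mu_\lambda\circ E_p^{-1})(y)\|\leq\lambda^n(1+\varepsilon(1-\lambda))$ (established just before the lemma), which together with the hypothesis $\varepsilon\M(T)\leq n\delta$ yields $\M(S_\lambda+(E_p\circ\mu_\lambda\circ E_p^{-1})_\# T)\leq\M(T)$ for every $\lambda\in[0,1]$; continuity in $\lambda$ then furnishes the discrete sequence with $\M(P_j-P_{j-1})\leq\beta$. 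The new wrinkle compared to Pitts is that $\tilde\partial T$ is not a cycle, and the paper checks explicitly that the extra term $\delta_0\,\mathbb{X}\,\partial T_\lambda$ produced by the cone-boundary formula equals (after pushforward) $-[|\gamma|]\llcorner\tilde A(p,\lambda r,r)$, so that $\partial S_\lambda$ has the required form. Your boundary calculation for $P_m$ is this same observation at $\lambda=0$, but the $\mu_\lambda$-family is what actually drives the interpolation.
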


And the next lemma settles the second issue mentioned above.

\begin{lemma}\label{choice-p_i}
Let $Z$ be a region as in Lemmas \ref{conical-replacements} and \ref{choice-Z}, and $V \in \V_n(M)$, which is rectifiable on $Z$. Then, for $\H^{n-1}$-almost all $p\in \gamma\cap Z$, we have that $||V||\S_{\rho}(p)=0$, whenever $0<\rho$ is such that $\B_{\rho}(p)\subset\subset Z$. 
\end{lemma}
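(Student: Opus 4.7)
The plan is to prove the stronger statement that the ``bad'' set
\begin{equation*}
A := \{p \in \gamma \cap Z \,:\, ||V||(\S_\rho(p)) > 0 \text{ for some } \rho > 0 \text{ with } \B_\rho(p) \subset\subset Z\}
\end{equation*}
is at most countable, which immediately yields $\H^{n-1}(A) = 0$ since $A$ is then a countable subset of the smooth $(n-1)$-dimensional manifold $\gamma$. Because $V$ is rectifiable on $Z$, one may write $||V|| \llcorner Z = \theta\,\H^n \llcorner S_V$ for a countably $n$-rectifiable set $S_V \subset Z$ with $\H^n(S_V) < \infty$ and positive measurable density $\theta$. Set
\begin{equation*}
\tilde{A} := \{(p,\rho) \in (\gamma\cap Z)\times(0,\infty) \,:\, \B_\rho(p) \subset\subset Z \text{ and } ||V||(\S_\rho(p)) > 0\},
\end{equation*}
so that for each $(p,\rho) \in \tilde{A}$ the set $E_{(p,\rho)} := S_V \cap \S_\rho(p)$ has positive $\H^n$-measure.

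The key step is to show that for two distinct elements $(p_1,\rho_1), (p_2,\rho_2) \in \tilde{A}$, the hypersurfaces $\S_{\rho_1}(p_1)$ and $\S_{\rho_2}(p_2)$ meet in an $\H^n$-null set. If $p_1 = p_2$, the two are disjoint level sets of the same smooth function $r_{p_1}$. If $p_1 \neq p_2$, both are smooth hypersurfaces with nonvanishing gradient on $Z$; were their overlap of positive $\H^n$-measure, at $\H^n$-density points of the overlap the approximate tangent planes of the two hypersurfaces would have to coincide, forcing the unit normals $\nabla r_{p_1}/|\nabla r_{p_1}|$ and $\nabla r_{p_2}/|\nabla r_{p_2}|$ to be parallel there. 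In the coordinates $E_{p_i}$, these gradients are the radial directions from $p_i$ to $y$ up to a smooth perturbation controlled by the size of $Z$ (as in the choice of $Z$ from the preceding lemmas); hence the locus of parallelism is of codimension $n$ in the $(n+1)$-manifold $Z$, i.e.\ of Hausdorff dimension at most one. For $n\geq 2$ this contradicts the assumed positivity of the $\H^n$-measure of the overlap.

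Granted this almost-disjointness of distinct coordinate spheres, the argument closes immediately. The collection $\{E_{(p,\rho)}\}_{(p,\rho)\in \tilde{A}}$ is a family of pairwise $\H^n$-almost-disjoint subsets of $S_V$, each of positive $\H^n$-measure; since $\H^n(S_V)<\infty$, for any $\varepsilon>0$ only finitely many of them can have $\H^n$-measure at least $\varepsilon$. Therefore $\tilde{A}$ is at most countable, its projection $A = \pi_1(\tilde{A}) \subset \gamma \cap Z$ is at most countable, and $\H^{n-1}(A) = 0$ as desired.

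The main obstacle is the almost-disjointness claim for distinct coordinate spheres when $p_1 \neq p_2$. In the purely Euclidean model this is elementary (two distinct round spheres meet in at most an $(n-1)$-sphere or a single point), but in the Riemannian case it requires the ``parallel gradients'' locus to genuinely be low-dimensional, which is ensured by shrinking $Z$ so that the coordinates $E_p$ and the functions $r_p$ behave uniformly in $p$; this is exactly the content of the choice of $Z$ made in Lemma \ref{conical-replacements} and Lemma \ref{choice-Z}.
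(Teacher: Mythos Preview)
Your approach is genuinely different from the paper's, and if it worked it would give the slightly stronger conclusion that the bad set $A$ is countable rather than merely $\H^{n-1}$-null. However, there is a real gap at exactly the point you yourself flag as ``the main obstacle'': the almost-disjointness of $\S_{\rho_1}(p_1)$ and $\S_{\rho_2}(p_2)$ when $p_1\neq p_2$. You assert that the locus where $\nabla r_{p_1}$ and $\nabla r_{p_2}$ are parallel has codimension $n$, justifying this by saying the gradients are Euclidean radial directions ``up to a smooth perturbation controlled by the size of $Z$'' and then deferring to Lemmas~\ref{conical-replacements} and~\ref{choice-Z}. But those lemmas record only Lipschitz bounds on $E_p$, $E_p^{-1}$, $r_p$, and the scaling map, together with convexity of $Z$; they say nothing about the parallel-gradients locus, and a $C^0$-small perturbation of a map does not in general preserve the codimension of a level set. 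As written, the key step is asserted, not proved. (There is also a minor slip: $\H^n(S_V)<\infty$ does not follow from rectifiability of $V$, since the density $\theta$ need not be bounded below; this is harmless, as you can run the countability argument with the finite measure $||V||$ in place of $\H^n\llcorner S_V$.)

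The almost-disjointness claim \emph{is} true, but proving it requires precisely the structural feature that drives the paper's own argument: in the coordinates $E_p=(x,s,t)$ the functions $s$ and $t$ are independent of the base point $p$, so that
\[
r_p(y)\,\nabla r_p(y)=t(y)\nabla t(y)+s(y)\nabla s(y)+(d\pi_y)^{\ast}\big((r_p\circ\pi)(y)\,\nabla^{\gamma} r_p(\pi(y))\big),
\]
with the first two terms $p$-independent and the third lying in $[\mathrm{span}(\partial/\partial t)]^{\perp}$. The paper exploits this via a Fubini argument: with $F(p,y)=\langle\nabla r_p(y),T_yV\rangle$ it swaps the order of integration in $\int_{\gamma\cap Z}||V||\{F(p,\cdot)=0\}\,d\H^{n-1}(p)$ and reduces to showing that, for each fixed $y\in Z\setminus\partial M$, the set $\{p\in\gamma\cap Z:F(p,y)=0\}$ contains at most one point, which follows from the displayed decomposition together with $r_p(x)\nabla^{\gamma}r_p(x)=-(\exp^{\gamma}_x)^{-1}(p)$. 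The same decomposition would in fact show that for $y\notin\partial M$ and $p_1\neq p_2$ the gradients $\nabla r_{p_1}(y)$ and $\nabla r_{p_2}(y)$ are \emph{never} parallel, which is what your argument needs --- but you have not supplied this analysis, and it is not contained in the lemmas you cite.
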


We present the proof of Lemmas \ref{conical-replacements} and \ref{choice-p_i} at the end of this section. In what follows, we introduce some preliminary observations.

First of all, we observe that making the coordinated neighborhood sufficiently small we can assume that the metric coefficients with respect to $E_q$ are such that $g_{tt}=1$, $g_{it} = g_{st}=0$, and $|g_{ij}-\delta_{ij}|$, $|g_{ss}-1|$, and $|g_{is}|$ are arbitrarily small. Moreover, we can assume that the Christoffel symbols are such that, for all $1\leq i,j,k \leq n-1$ and $a,b \in \{1,\ldots,n-1\}\cup\{s\}$, 
\begin{eqnarray*}
\left.\begin{aligned} 
\nonumber & \Gamma_{tt}^t = \Gamma_{at}^t = \Gamma_{tt}^a = 0\\
\nonumber & \Gamma_{ij}^k, \Gamma_{ss}^a, \Gamma_{is}^s \text{ are small}\\
\nonumber & \Gamma_{ij}^s, \Gamma_{is}^j \text{ are uniformly bounded in terms of } II_{\gamma\subset \partial M}\\
\nonumber & \Gamma_{at}^b, \Gamma_{ab}^t \text{ are uniformly bounded in terms of } II_{\partial M\subset M},
\end{aligned}\right.
\end{eqnarray*}
where $II_{\gamma\subset \partial M}$ and $II_{\partial M\subset M}$ denote the second fundamental forms of the immersions of $\gamma$ in $\partial M$, and of $\partial M$ in $M$, respectively. 

Since the derivatives of the metric coefficients can be expressed as a linear combination of the Christoffel symbols with the metric coefficients itself, we see that the Lipschitz constants $\text{Lip}(g_{ab})$, $a,b \in \{1,\ldots, n-1\}\cup\{s,t\}$, can be bounded independently of $q$. This can extended as an uniform bound for the Lipschitz constants of the higher dimensional metric tensors
\begin{equation*}
g_k(y)(v,w) = \langle v, \Lambda_k D(E_q)(y) \rangle \cdot \langle w, \Lambda_k D(E_q)(y) \rangle,
\end{equation*}
where $y \in T_qM$, $v,w \in \Lambda_k(T_qM)$ are $k$-vectors, and $\langle \cdot, \cdot \rangle$ is the canonical pairing between vectors and covectors, for $1\leq k \leq n+1$. See paragraph 25 in \cite{simon} for the basic notation on vectors and covectors.

Next, we observe that it follows from $D(E_q)(0_q) = Id$, the fact that we can assume that
\begin{equation*}
||\Lambda_n D (E_{q}^{-1})||, \ \ \text{Lip}(E_q|_{B^+_{R}(0)}), \text{ and } \text{Lip}(E_q^{-1}|_{\B_R}(q))
\end{equation*} 
are uniformly, with respect to $q$, close to $1$. We use $B^+_{R}(0)$ to denote the Euclidean ball of radius $R$ centered at the origin $0_q \in T_qM$.

Let $0\leq \lambda \leq 1$, and consider the map $\mu_{\lambda} : T_qM \rightarrow T_qM$ defined by $\mu_{\lambda}(v) = \lambda v$. The following claim is a consequence of our previous estimates.

\begin{claim}\label{derivative-mult-by-lambda}
Given $\varepsilon >0$, we have that
\begin{equation*}
||\Lambda_n D (E_q \circ \mulam \circ E_{q}^{-1})(y)|| \leq \lambda^n (1+\varepsilon (1-\lambda)),
\end{equation*}
whenever $\lambda \in[0,1]$, and $y$ is sufficiently close to $q$, possibly depending on $\varepsilon$.
\end{claim}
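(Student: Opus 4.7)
The plan is to apply the chain rule to expose the $\lambda$ dependence explicitly, and then exploit the crucial observation that at $\lambda=1$ the map $E_q\circ\mulam\circ E_q^{-1}$ is the identity, so the deviation from the model factor $\lambda^n$ must itself vanish as $\lambda\to 1$.

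Writing $v=E_q^{-1}(y)$ and applying the chain rule,
\begin{equation*}
D\bigl(E_q\circ\mulam\circ E_q^{-1}\bigr)(y) = DE_q(\lambda v)\circ(\lambda\cdot\mathrm{Id})\circ[DE_q(v)]^{-1} = \lambda\, A_\lambda(v),
\end{equation*}
where $A_\lambda(v):=DE_q(\lambda v)\circ[DE_q(v)]^{-1}$. Taking the $n$-th exterior power factors out $\lambda^n$, so it suffices to show $\|\Lambda_n A_\lambda(v)\|\le 1+\varepsilon(1-\lambda)$ for $v$ sufficiently close to the origin. The key observation is that $A_1(v)=\mathrm{Id}$, so
\begin{equation*}
A_\lambda(v)-\mathrm{Id} = \bigl[DE_q(\lambda v)-DE_q(v)\bigr]\circ[DE_q(v)]^{-1}
\end{equation*}
carries an explicit factor of $(1-\lambda)$. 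The uniform Lipschitz control on $DE_q$ recorded just before the claim gives $\|DE_q(\lambda v)-DE_q(v)\|\le L(1-\lambda)|v|$ with $L$ independent of $q$; combined with the uniform boundedness of $[DE_q(v)]^{-1}$ near $0_q$, this yields $\|A_\lambda(v)-\mathrm{Id}\|\le C(1-\lambda)|v|$ uniformly in $\lambda\in[0,1]$ and $q\in\gamma$. Feeding this into the elementary estimate $\|\Lambda_n B\|\le(1+\|B-\mathrm{Id}\|)^n$ followed by a Bernoulli expansion gives $\|\Lambda_n A_\lambda(v)\|\le 1+2^{n-1}nC(1-\lambda)|v|$ for $|v|$ small, and shrinking the coordinate neighborhood of $q$ until $2^{n-1}nC|v|\le\varepsilon$ completes the argument.

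The one delicate point to flag is that $A_\lambda(v)$ strictly maps between two different tangent spaces, so the expression ``$A_\lambda(v)-\mathrm{Id}$'' must be interpreted via the $E_q$-pullback coordinates, in which $A_\lambda(v)$ becomes the identity endomorphism of $T_qM$ but with source carrying the pullback metric $g_q(v)$ and target carrying $g_q(\lambda v)$. Under this reading the displayed estimate is really a Lipschitz-in-$v$ comparison between the metric tensors $g_k$ defined at the start of this subsection, and the previously recorded uniform Lipschitz bounds on the $g_k$ make the interpretation painless. No genuinely new obstacle arises; the whole argument is essentially a Taylor-type expansion of the metric coefficients about the origin, with the $(1-\lambda)$ factor built in automatically by the identity $A_1=\mathrm{Id}$.
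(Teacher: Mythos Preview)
Your proof is correct and follows essentially the same route the paper indicates: the paper does not spell out a proof but simply refers to Pitts' argument in Section~3.4(3) of \cite{pitts} (and Lemma~B.4 of \cite{Li-Zhou}), noting that the only input needed is the uniform Lipschitz bound on the metric coefficients established in the preceding paragraphs. Your chain-rule factorization $\lambda\,A_\lambda(v)$, the observation $A_1=\mathrm{Id}$ to extract the $(1-\lambda)$ factor, and the passage to $\Lambda_n$ via the Lipschitz control on $DE_q$ (equivalently on the pulled-back metric tensors $g_k$) are exactly the steps in Pitts' proof, so you have faithfully reconstructed what the paper cites.
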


The proof of this fact is analogous to the argument of the equivalent  inequality in Section 3.4(3) of \cite{pitts}. This is true because an uniform bound on the Lipschitz constant of the metric coefficients is enough to go through all steps of that argument. This was also noted in Lemma B.4 of \cite{Li-Zhou}.

Let us prove now that the domains $\B_r(q)$ are relatively convex in the sense that the second fundamental form of $\S_r(q)\subset M$ with respect to the unit normal vector that points to the interior of $\B_r(q)$ is positive definite. Observe that
\begin{equation}\label{eq-gradient}
\nabla r_q = \left( g^{ij} \cdot \frac{x_i}{r_q} + g^{sj} \cdot\frac{s}{r_q}\right) \parj + \left( g^{is} \cdot\frac{x_i}{r_q} + g^{ss}\cdot \frac{s}{r_q}\right) \pars + \frac{t}{r_q} \pat.
\end{equation}
We know that $\nabla r_q$ is normal to the $\S_r(q)$. Moreover, it follows from previous steps that we can assume that $|\nabla r_q|$ is nearly $1$, so it is close to be the unit normal vector of the hypersurfaces $\S_r(q)$. Observe that
\begin{equation*}
\hessq (v,w) = g(\nabla_{v} \nabla r_q, w) = - g(\nabla r_q, II_{\S_r(q)}(v,w)),
\end{equation*}
for $v,w \in T_y\S_r(q)$, where $ II_{\S_r(q)}$ denotes the second fundamental form of $\S_r(q)\subset M$. Therefore, $\hessq (v,w)$ approximates the scalar value of $ II_{\S_r(q)}(v,w)$ with respect to the inward pointing unit normal.

A direct computation using (\ref{eq-gradient}) yields $\hessq (\partial_t,\partial_t) = r_q^{-1} - t^2 r_q^{-3}$, and that the following quantities are uniformly bounded:
\begin{eqnarray*}
\left.\begin{aligned} 
\nonumber & |\hessq (\partial_t,\partial_i)+ tx_i r_q^{-3}|\text{, }
 |\hessq (\partial_t,\partial_s)+ ts r_q^{-3}|\\
\nonumber & |\hessq (\partial_s,\partial_i)+ sx_i r_q^{-3}|\text{, } 
|\hessq (\partial_s,\partial_s)- (r_q^{-1} - s^2 r_q^{-3})|\text{ and}\\
\nonumber & |\hessq (\partial_i,\partial_j)- (\delta_{ij} r_q^{-1} - x_i x_j r_q^{-3})|.
\end{aligned}\right.
\end{eqnarray*}
Then, we conclude that $||\hessq - (E_q^{-1})^{\ast}h||_g$ is uniformly bounded, where $(E_q^{-1})^{\ast}$ denotes a pull-back and $h$ is given by  
\begin{equation*}
h(v,w) = r_q^{-1}(\langle v, w \rangle - \langle v, \partial r_q \rangle \langle w, \partial r_q \rangle),
\end{equation*}
where $\partial r_q$ is the outward pointing unit normal vector to the Euclidean spheres in $T_qM$, and $\langle \cdot, \cdot \rangle$ represents the Euclidean inner product. Since $E_q^{-1}$ maps $\S_r(q)$ to Euclidean half-spheres, and the restriction of $h$ to these half-spheres coincides with their second fundamental  form with respect to the Euclidean metric, we conclude that $\B_r(p)$ are relatively convex.

The choice of the domain $Z$ mentioned in the statement of Lemma \ref{conical-replacements} is done in such a way that is has the properties listed in the following lemma.

\begin{lemma}\label{choice-Z}
For every $q\in \gamma$ and $0<\varepsilon <1$, there exists $Z$, neighborhood of $q$ in M, with the following properties. If $p\in \gamma\cap Z$, then the domain $Z$ can be parametrized using $E_p$, and we have
\begin{enumerate}
\item[(i)] $E_p : E_p^{-1}(Z) \rightarrow Z$ is a $C^2$ diffeomorphism

\item[(ii)] the set $Z$ is strictly relatively convex

\item[(iii)] $[\text{Lip}(E_p)]^n[\text{Lip}(E_p^{-1})]^n\leq 2$

\item[(iv)] $\text{Lip}(r_p |_{Z} )\leq 2 \ \ \ \ \ \ \ \ \ \ \ \ \ $

\item[(v)] $(E_p\circ \mulam \circ E_p^{-1})(y) \in Z$, whenever $y\in Z$ and $\lambda \in [0,1]$

\item[(vi)] $||\Lambda_n D (E_p \circ \mulam \circ E_{p}^{-1})(y)|| \leq \lambda^n (1+\varepsilon (1-\lambda))$, for all $y\in Z$, $\lambda\in [0,1]$.
\end{enumerate}
\end{lemma}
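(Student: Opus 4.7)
The plan is to set $Z = \B_{r_0}(q)$ for $r_0 > 0$ to be chosen sufficiently small, and verify the six properties by combining the uniform estimates established in the preceding discussion with continuous dependence of the construction on the base point. The first observation is that the map $(p,y) \mapsto E_p(y)$ is jointly $C^2$ in a neighborhood of $(q, 0_q)$, so the bounds on metric coefficients, Christoffel symbols, and the Hessian of $r_p$ derived at $q$ extend to nearby $p \in \gamma$ with only a small uniform loss. In particular, for any prescribed small $\varepsilon' > 0$ one can find a neighborhood $V$ of $q$ on which these quantities satisfy their near-Euclidean bounds uniformly in $p \in \gamma \cap V$, and on which each $E_p$ is a $C^2$ diffeomorphism onto its image.

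I would first fix $r_0 > 0$ small enough that $\B_{2r_0}(q) \subset V$ and set $Z = \B_{r_0}(q)$. Properties (i), (iii) and (iv) then follow from $DE_p(0_p) = \text{Id}$ together with the uniform smallness of the metric perturbation, which forces $\text{Lip}(E_p)$, $\text{Lip}(E_p^{-1})$ and $|\nabla r_p|$ all close to $1$; for (iv) one substitutes $p$ in place of $q$ in the explicit formula (\ref{eq-gradient}). Property (ii), strict relative convexity of $Z$, follows from the Hessian computation carried out above: the second fundamental form of $\S_r(p)$ is close to that of a Euclidean half-sphere in $T_pM$, hence uniformly positive definite, so sublevel sets of $r_p$ are strictly convex, and since $E_p$ and $E_q$ differ by a near-identity diffeomorphism on $V$, the set $Z$ is also strictly convex when viewed in the $E_p$-chart. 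Property (vi) is exactly Claim \ref{derivative-mult-by-lambda} applied at $p$, the point being that the Lipschitz constants of the metric tensors used in its proof are controlled uniformly over $V$.

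The main obstacle is property (v), which requires $E_p^{-1}(Z)$ to be star-shaped about $0_p$ in the Euclidean sense, so that the linear dilation $\mulam$ preserves it. Metric convexity of $Z$ is not quite the same statement, since $\mulam$ parameterizes the straight Euclidean segment from $0_p$ to $E_p^{-1}(y)$ rather than the $M$-geodesic from $p$ to $y$. To handle this I would argue quantitatively: viewed in $E_p$-coordinates, the set $E_p^{-1}(Z)$ is a near-translate by $E_p^{-1}(q)$ of the Euclidean half-ball of radius $r_0$, up to errors of order $\varepsilon' r_0$, because $\text{Lip}(E_p)$ and $\text{Lip}(E_p^{-1})$ are both within $\varepsilon'$ of $1$. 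Since $|E_p^{-1}(q)| \leq (1+\varepsilon')r_0$, both $0_p$ (which corresponds to $p \in Z$) and any $z = E_p^{-1}(y)$ with $y \in Z$ lie inside this near-translate; by convexity of Euclidean half-balls the segment $\{\lambda z : \lambda \in [0,1]\}$ stays inside it, and hence inside $E_p^{-1}(Z)$ provided $r_0$ is taken small enough to absorb the $\varepsilon' r_0$ deficit. This final smallness condition on $r_0$, combined with the ones needed for (i)--(iv) and (vi), yields a single choice of $Z$ realizing all six properties simultaneously.
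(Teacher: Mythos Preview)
Your choice $Z = \B_{r_0}(q)$ and your treatment of items (i)--(iv) and (vi) coincide with the paper's; both simply invoke the metric, Christoffel, and Hessian estimates assembled in the paragraphs preceding the lemma, together with the uniformity in $p$ coming from smooth dependence of $E_p$ on the base point.

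The gap is in (v). You correctly isolate the issue---what is needed is that $E_p^{-1}(Z)$ be star-shaped about $0_p$ in the Euclidean sense---but the Lipschitz/Hausdorff argument does not close. Closeness of $E_p^{-1}(Z)$ to a translated half-ball at the level of $C^{0}$ or $C^{0,1}$ data does \emph{not} rule out inward dents that obstruct Euclidean segments from a point near the boundary, and $0_p$ can lie near $\partial(E_p^{-1}(Z))$ whenever $p$ is near $\partial Z$. Nor does ``take $r_0$ small'' help: the deficit $\varepsilon' r_0$ scales with $r_0$, so only the relative error $\varepsilon'$ improves, and an arbitrarily small relative Hausdorff error is still compatible with failure of star-shapedness. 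The paper's route is instead to use the second-order information already established: the Hessian computation shows that $\S_r(q)$ has second fundamental form with eigenvalues of order $r^{-1}$, and since in $E_p$-coordinates the Euclidean and Riemannian Hessians of $r_q$ differ by terms bounded by $|\Gamma|\cdot|\nabla r_q| = O(1)$, the Euclidean Hessian of $r_q\circ E_p$ remains positive definite on the tangent spaces of its level sets once $r_0$ is small. Thus $E_p^{-1}(Z)$ is Euclidean-convex, hence star-shaped about every interior point, in particular about $0_p$. Replacing your final paragraph by this convexity step is the only change needed.
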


\begin{proof}
We choose $Z$ as a region of the form $\B_{r(q)}(q)$. From the previous arguments, items (i)-(iv) and (vi) follow immediately. On way to verify (v) is using the relative convexity of $Z$ to conclude that $E_p^{-1}(Z)$ is convex. 
\end{proof}

\begin{proof}[Proof of Lemma \ref{conical-replacements}]
First of all, we obtain inequalities similar to those presented in section 3.4(7) of \cite{pitts}. These facts provide us estimates for the masses of images of currents by the maps $(E_p\circ \mulam \circ E_p^{-1})$, and of the partial conical replacements that we use in this proof. Recall that $q, \varepsilon$, and $Z$ are fixed throughout the whole argument.

Let $p, r$, and $\delta$ be as in the statement, $\lambda\in [0,1]$, and $T^{\prime}$ be any current in $\Z_n(clos(\B_r(p)))$. Then, we have
\begin{equation}\label{supp-imageT-lambda}
spt((E_p\circ \mulam \circ E_p^{-1})_{\#}T^{\prime}) \subset clos(\B_{\lambda r}(p))
\end{equation}
and its mass can be estimated as
\begin{equation}\label{imageT-mass-lambda}
\M ((E_p\circ \mulam \circ E_p^{-1})_{\#}T^{\prime}) \leq \lambda^{n}(1+\varepsilon(1-\lambda))\M (T^{\prime}) \leq \M(T^{\prime}).
\end{equation} 
Moreover, if $T \in \Z_n(clos(\B_r(p)), \gamma \cup \S_r(p))$ satisfies the first two properties mentioned in item (c) of the statement, then, the partial conical replacement
\begin{equation}\label{def-partial-cone}
S_{\lambda} = (E_p)_{\#} \left( \delta_0 \mathbb{X} [(E_p^{-1})_{\#}\tilde{\partial}T - (\mulam\circ E_p^{-1})_{\#}\tilde{\partial}T]\right),
\end{equation}
where $\tilde{\partial}T = (\partial T)\llcorner \S_r(p) = \partial T - [|\gamma|]\llcorner \B_r(p)$, is such that
\begin{enumerate}
\item[(i)] $spt(\Slam) \subset \Atil (p, \lambda r, r) = \{\lambda r \leq r_p \leq r\}$

\item[(ii)] $\partial \Slam = \tilde{\partial} T - (E_p\circ \mulam \circ E_p^{-1})_{\#}(\tilde{\partial} T) + [|\gamma|]\llcorner \Atil (p,\lambda r, r)$

\item[(iii)] $\M (\Slam) \leq (\M(T)-\delta)(1-\lambda^n)$

\item[(iv)] 
\begin{eqnarray*}
 \M (\Slam + (E_p\circ \mulam \circ E_p^{-1})_{\#} T) \hspace{6cm}\\ 
 \hspace{4cm}\leq (\M(T)-\delta)(1-\lambda^n)+\lambda^{n}(1+\varepsilon(1-\lambda))\M (T) 
\end{eqnarray*}

\item[(v)] if, in addition, $T$ satisfies the third property in item (c) of the present lemma, then the inequality in (iv) reduces to
\begin{equation*}
\M (\Slam + (E_p\circ \mulam \circ E_p^{-1})_{\#} T) \leq \M(T).
\end{equation*}
\end{enumerate}
Compared to the proofs in 3.4(7) of \cite{pitts}, the only difference in the verification of (\ref{supp-imageT-lambda}), (\ref{imageT-mass-lambda}), and (i)-(v) above appears in the argument for part (ii). This happens because, in our case, we consider conical constructions over currents $\tilde{\partial} T$ which are not cycles. To overcome this minor difficulty, the only missing remark is as follows. Let us use $T_{\lambda} = (E_p^{-1})_{\#}\tilde{\partial}T - (\mulam\circ E_p^{-1})_{\#}\tilde{\partial}T$, then
\begin{eqnarray}\label{brdy-cone}
\partial \Slam = (E_p)_{\#} \left( \partial [\delta_0 \mathbb{X} T_{\lambda} ]\right) = (E_p)_{\#}(T_{\lambda}) - (E_p)_{\#}\left( \delta_0 \mathbb{X} [\partial T_{\lambda}]\right).
\end{eqnarray}
Also, observe that the definition of $\tilde{\partial}T$ implies that
\begin{eqnarray*}
\partial T_{\lambda} &=& (E_p^{-1})_{\#}\partial (\tilde{\partial}T) - (\mulam\circ E_p^{-1})_{\#}\partial (\tilde{\partial}T)\\
\nonumber & = & - (E_p^{-1})_{\#}\partial ([|\gamma|]\llcorner \B_r(p)) + (\mulam\circ E_p^{-1})_{\#}\partial ([|\gamma|]\llcorner \B_r(p)).
\end{eqnarray*}
Since $E_p$ restricted to $\gamma$ is simply a system of geodesic normal coordinates centered at $p$, we conclude that
\begin{equation*}
(E_p)_{\#}\left( \delta_0 \mathbb{X} [\partial T_{\lambda}] \right) = - [|\gamma|]\llcorner  \Atil (p, \lambda r, r).
\end{equation*}
This fact, combined with expression (\ref{brdy-cone}), allows us to conclude (ii).

The rest of the proof of the lemma follows the same arguments of Lemma 3.5 of \cite{pitts}, except for the fact that we need to deal with cones over currents that are not cycles. In our setting, the sequence $P_i$ of currents interpolating between $T$ and $(E_p)_{\#} ( \delta_0 \mathbb{X} (E_p^{-1})_{\#}((\partial T)\llcorner \S_r(p)))$ is defined by the exact same expressions as in Pitts' proof. The adaptations in this part boil down to considerations similar to those in the study of the boundary of $\Slam$. For instance, when proving that the current $P_i$ has boundary $\partial T$.
\end{proof}

\begin{proof}[Proof of Lemma \ref{choice-p_i}]
In this proof, we follow closely the ideas in Lemma 3.6 of \cite{pitts} and Lemma B.7 of \cite{Li-Zhou}. The rectifiability of $V$ in $Z$ implies that $(y, T_yV) \in G_n(Z)$ for $||V||$-almost all $y\in Z$, and that $y\mapsto T_yV$ is $||V||$-measurable. Which implies that the map
\begin{equation*}
F(p,y) = \langle \nabla r_p (y), T_yV\rangle\text{, } (p,y)\in (\gamma\cap Z)\times Z,
\end{equation*}
which is the orthogonal projection of $\nabla r_p(y)$ over the $n$-dimensional linear subspace $T_yV$, is $(\H^{n-1}\times ||V||)$-summable. Fubini's Theorem tells us that
\begin{equation*}
\int_{\gamma\cap Z} ||V||\{y\in Z\setminus (\partial M) : F(p,y)=0\} d\H^{n-1}(p)
\end{equation*}
is equal to
\begin{equation*}
\int_{Z\setminus (\partial M)} \H^{n-1}\{p \in \gamma \cap Z : F(p,y) =0\} d||V||(y).
\end{equation*}
Therefore, it suffices to show that $\H^{n-1}\{p \in \gamma \cap Z : F(p,y) =0\} = 0$, for $||V||$-almost all $y \in Z\setminus (\partial M)$. Indeed, this will imply that for $\H^{n-1}$-almost all $p \in \gamma\cap Z$ we have $||V||\{y\in Z \setminus (\partial M): F(p,y)=0\}=0$. In particular, we will be able to obtain, for those $p\in \gamma\cap Z$ and all $\rho>0$ with $\B_{\rho}(p) \subset\subset Z$,
\begin{eqnarray*}
||V||\S_{\rho}(p) & =  & ||V||\{y \in \S_{\rho}(p) : T_yV \subset T_y\S_{\rho}(p)\}\\
\nonumber         & = & ||V||\{y \in \S_{\rho}(p) : F(p,y) = 0\}\\
\nonumber         & \leq & ||V||\{y \in Z\setminus (\partial M) : F(p,y) = 0\} = 0.
\end{eqnarray*}

In the rest of this proof, we will make use of the map $\pi : Z \rightarrow \gamma \cap Z$ defined by $\pi(x,s,t) = (x,0,0)$, in any of our coordinates. In other words, it is the map that takes $y \in Z$, first consider its nearest point projection $(x,s,0)$ in $\partial M$, and, finally, returns the nearest point projection of $(x,s,0)$ in $\gamma$. In particular, $\pi(y)$ does not depend on the center of the coordinates.

From now on, fix $y = (x,s,t)\in Z\setminus (\partial M)$. It follows, by definition, that $r_p(y)^2 = t(y)^2+s(y)^2 + (r_p\circ \pi)(y)^2$, for every $p\in \gamma \cap Z$. Then, we can write
\begin{equation*}
r_p(y) \nabla r_p(y) = t(y) \nabla t(y) + s(y) \nabla s (y) + (r_p\circ \pi)(y) \nabla (r_p\circ \pi)(y).
\end{equation*}
Since the vanishing of $F(p,y)$ is equivalent to $\nabla r_p(y) \in (T_yV)^{\perp}$, we conclude that $\{p\in \gamma\cap Z : F(p,y)=0\}$ is a subset of
\begin{equation}\label{contained-affine}
\{p\in \gamma\cap Z: (r_p\circ \pi)(y) \nabla (r_p\circ \pi)(y) \in \xi(y) + (T_yV)^{\perp}\},
\end{equation}
where $\xi(y) = - t(y) \nabla t(y) - s(y) \nabla s (y)$, and $\xi(y) + (T_yV)^{\perp}$ represents the translation of $(T_yV)^{\perp}$ by $\xi(y)$. Observe also that
\begin{eqnarray*}
(r_p\circ \pi)(y) \nabla (r_p\circ \pi)(y) &=& (r_p\circ \pi)(y) (d\pi_y)^{\ast} \left( \nabla r_p (\pi(y))\right)\\
\nonumber &=& (d\pi_y)^{\ast} \left( (r_p\circ \pi)(y) \nabla^{\gamma} r_p (\pi(y))\right),
\end{eqnarray*}
where $(d\pi_y)^{\ast}$ is the adjoint, with respect to inner products induced by $g$, of the linear map $d\pi_y: T_yM \rightarrow T_{\pi(y)}\gamma$. One can easily see that $d\pi_y$ is onto for $||V||$-almost all $y\in Z$. Then, $(d\pi_y)^{\ast}$ is independent of the point $p$ and injective. The last property of the vector $\nabla (r_p\circ \pi)(y)$ that we need to remark is that it belongs to the $n$-dimensional subspace $[span(\partial/\partial t)]^{\perp}$, the orthogonal complement of the space spanned by $(\partial/\partial t)(y)$.

Observe that the $1$-dimensional affine space $\xi(y) + (T_yV)^{\perp}$ is also independent of $p$, and that it is not contained in $[span(\partial/\partial t)]^{\perp}$. Indeed, $\xi(y)$ is in the first space but does not belong to the second because of its $t(y)\nabla t(y)$ component, which does not vanish for $y \in Z\setminus (\partial M)$. Therefore, the set
\begin{equation*}
A_y = ((d\pi_y)^{\ast})^{-1}(\xi(y) + (T_yV)^{\perp})\cap [span(\partial/\partial t)]^{\perp}) \subset T_{\pi(y)}\gamma
\end{equation*}
is either empty, or contains a unique point, does not dependent on $p$, and has the property that
\begin{equation}\label{contained-affine-2}
\{p\in \gamma\cap Z : F(p,y)=0\} \subset \{p\in \gamma\cap Z: (r_p\circ \pi)(y) \nabla^{\gamma} r_p (\pi(y)) \in A_y\}.
\end{equation}
Since $r_p|_{\gamma}$ is the distance function to $p$ in $\gamma$, we have that 
\begin{equation*}
r_p(x) \nabla^{\gamma} r_p(x) = - (exp_{x}^{\gamma})^{-1}(p),
\end{equation*}
for every $x \in \gamma$, where $exp_x^{\gamma}$ denotes the usual exponential of $\gamma$ at $x$. This and the fact expressed in (\ref{contained-affine-2}) together allow us to conclude that
\begin{equation*}
\{p\in \gamma\cap Z : F(p,y)=0\} \subset exp_{\pi(y)}^{\gamma}(-A_y)\cap Z.
\end{equation*}
Since $n-1 = \text{dim}(\gamma)\geq 1$ and the set on the right hand side of the inclusion above has at most one point, we conclude that $\H^{n-1}(exp_{\pi(y)}^{\gamma}(-A_y)\cap Z) = 0$, for our choices of $y$. Then, the lemma is proved.
\end{proof}


\section{Pull-tight}\label{sect-pull-tight}

In this short section, we recall the stationarity condition introduced in \cite{dl-r}, and include the analog in our setting of the tightening map. We include also a result that relates convergence in different topologies of integral currents with boundary contained in $\gamma$. This is the content of the following lemma, which is the equivalent of Lemma 4.1. of \cite{MN-willmore}, and plays a role in the pull-tight deformation.

\begin{lemma}\label{lemma4.1ofMN}
Let $\mathcal{S}$ be a subset of $\Z_k(M,\gamma)$ of currents with uniformly bounded masses which is compact with respect to the $\F$-metric topology. For every $\varepsilon >0$, there exists $\delta >0$ with the following property: for $T \in \Z_k(M,\gamma)$ and $S\in \mathcal{S}$ with $\Flat_M(T-S) \leq \delta$ and $\M (T) \leq \M(S) + \delta$, then $\F(T,S)\leq \varepsilon$.
\end{lemma}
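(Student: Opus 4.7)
My plan is a standard compactness and contradiction argument, following the outline of the original Lemma 4.1 of \cite{MN-willmore} and adapting it to the present setting of currents with non-zero boundary in $\gamma$.

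Suppose, for contradiction, that the conclusion fails. Then there exist $\varepsilon>0$ and sequences $T_i \in \Z_k(M,\gamma)$ and $S_i \in \mathcal{S}$ with $\Flat_M(T_i - S_i) \to 0$, $\M(T_i) \leq \M(S_i) + 1/i$, but $\F(T_i, S_i) > \varepsilon$ for all $i$. Using the $\F$-compactness of $\mathcal{S}$, pass to a subsequence so that $S_i \to S$ in the $\F$-metric for some $S \in \mathcal{S}$; in particular $S_i \to S$ in $\Flat_M$, $\M(S_i)\to \M(S)$, and $|S_i|\to |S|$ as varifolds. Combined with $\Flat_M(T_i - S_i) \to 0$, this yields $\Flat_M(T_i - S) \to 0$.

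Next I would extract a varifold limit. The masses $\M(T_i)$ are uniformly bounded (by $\sup_i \M(S_i) + 1$), so by the Federer--Fleming compactness theorem a subsequence of the $T_i$ converges in the flat topology to an integral current, which must equal $S$ by uniqueness of the flat limit. The hypothesis $\M(T_i)\leq \M(S_i)+1/i$ gives $\limsup_i \M(T_i)\leq \M(S)$, while lower semicontinuity of the mass under flat convergence gives $\M(S)\leq \liminf_i \M(T_i)$; hence $\M(T_i)\to \M(S)$. The induced varifolds $|T_i|$ have uniformly bounded mass, so a subsequence converges to some $V \in \V_k(M)$. Because $T_i \to S$ in the flat topology and $\M(T_i)\to \M(S)$ with $S$ integral (rectifiable), a standard result (see, e.g., 2.1(18) in \cite{pitts}) implies $V = |S|$; in fact the whole sequence $|T_i|$ converges to $|S|$.

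Combining $|T_i|\to |S|$ and $|S_i|\to |S|$ in the varifold topology yields $\mathcal{F}(|T_i|, |S_i|)\to 0$, and together with $\Flat_M(T_i-S_i)\to 0$ this gives
\begin{equation*}
\F(T_i, S_i) = \Flat_M(T_i - S_i) + \mathcal{F}(|T_i|, |S_i|) \to 0,
\end{equation*}
contradicting $\F(T_i,S_i) > \varepsilon$. This proves the lemma. The only step that is specific to our boundary setting is the first one: we must make sure that the flat and varifold compactness arguments make sense for currents in $\Z_k(M,\gamma)$, but since these currents sit inside integral currents of $M$ with uniformly bounded masses, the general compactness theorems apply directly and $S \in \mathcal{S}\subset \Z_k(M,\gamma)$ ensures the flat limit remains in the correct space. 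I do not foresee a genuine obstacle; the argument is essentially formal once the $\F$-compactness of $\mathcal{S}$ is used to produce the common limit $S$.
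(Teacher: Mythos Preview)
Your argument is correct and uses the same key fact as the paper (flat convergence plus mass convergence of integral currents implies varifold convergence of the induced varifolds, via 2.1(18)(f) of \cite{pitts}). The paper organizes the proof slightly differently---it first isolates that fact as a preliminary observation, then proves a local version of the lemma around a fixed $S$ and concludes by a finite covering of $\mathcal{S}$---whereas you run a single global contradiction using the $\F$-compactness of $\mathcal{S}$ directly; the substance is the same.
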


\begin{proof}
The proof of this lemma follows the same steps of that presented in \cite{MN-willmore}. We sketch that argument here to show that the presence of boundaries is not an issue. First of all, we observe that if $T$, $T_1$, $T_2, \ldots \in \Z_k(M,\gamma)$ have uniformly bounded masses, then $\F(T,T_i)\rightarrow 0$ if and only if $\Flat_M(T_i - T)\rightarrow 0$ and $\M(T) = \lim_i \M(T_i)$. Indeed, by definition, $\F(T,T_i)\rightarrow 0$ is equivalent to validity of the following two things $\Flat_M(T_i - T)\rightarrow 0$ and the induced varifolds $|T_i|$ converge to $|T|$ in the weak topology. Then, the continuity of the mass with respect to the varifold convergence implies the ``only if" part of the claim. To prove the ``if" part of it, suppose that $\Flat_M(T_i-T)\rightarrow 0$ and $\lim_i \M(T_i) = \M(T)$. Observe that whenever a subsequence $|T_j|$ converges to a varifold $V$, we have $V= |T|$. Indeed, one can check that the following properties hold:
\begin{itemize}
\item $\cup_j \ \ spt(T_j)\subset M$, which is bounded;

\item $\Flat_M(T_j - T)\rightarrow 0$, implies that $T_j$ converges to $T$ with respect to the flat topology in $I_k(\R^N)$, see Section 31 of \cite{simon}; and 

\item $||T_j||(\R^N) = \M(T_j)$ converges to $||V||(\R^N)$ because of the varifold convergence, and to $\M(T)$ by hypothesis.
\end{itemize} 
In particular, $||V||(\R^N)=\M(T)$, and the claim follows as an application of the result stated in 2.1(18)(f) of \cite{pitts}. Therefore, the usual compactness of the space of varifolds proves that the sequence $|T_i|$ converges to $|T|$.

Now we use the remark on the previous paragraph to sketch the proof of the lemma. First, one can prove that given $\varepsilon>0$ and $S\in \Z_k(M,\gamma)$, there exist $r,\delta>0$ such that: for $S^{\prime}\in B^{\F}_r(S)$ and $T\in \Z_k(M,\gamma)$ with $\Flat_M(T-S^{\prime}) \leq \delta$ and $\M (T) \leq \M(S^{\prime}) + \delta$, then $\F(T,S^{\prime})\leq \varepsilon$. This is proved by contradiction. Letting $S_i, T_i \in \Z_k(M,\gamma)$, and $\delta_i \searrow 0$, be such that $\M(T_i)\leq \M(S_i) + \delta_i$, $\Flat_M(S_i-T_i)\rightarrow 0$, $\F(S_i,S)\leq \delta_i$, and $\F(T_i, S_i)$ does not approach zero, we can prove that $T_i$ converge to $S$ in the flat topology of currents in $\R^N$, and $\F(|S|,|T_i|)$ does not approach zero. As another application of the result in 2.1(18)(f) of \cite{pitts}, we obtain that $\M(S)=\lim_i \M(T_i)$. This is a contradiction because of the claim proved in the previous paragraph. To conclude the proof of the lemma, we apply a finite covering argument. 
\end{proof}


\begin{defi}\label{stationary}
Let $U \subset M$ be a relatively open subset of $M$. We use $\V_s(U, \gamma)$ to denote the set of varifolds $V \in \V_n(M)$ for which $\delta V(X) \geq 0$, for all $X \in \mathfrak{X}(M)$ satisfying $spt(X) \subset U$, $X|_{\gamma} = 0$, and $g(X, \nu) \geq 0$ at $\partial M$ (recall that $\nu$ is the inward pointing unit vector normal to $\partial M$). 
\end{defi}

This stationarity condition is the same used by De Lellis and Ramic in \cite{dl-r}. Observe that $V \in \V_s(U, \gamma)$ implies that $\delta V (X) = 0$, for every non-exterior vector field supported away from $\gamma$; i.e., $X \in \mathfrak{X}(M)$ compactly supported in $U\setminus \gamma$, and with $g(X, \nu) \geq 0$ at $\partial M$. Indeed, it follows from the maximum principle that $spt(||V||)\cap (\partial M)\subset \gamma$. Let $X$ be a vector field supported away from $\gamma$. Consider a smooth function $f : M \rightarrow [0,1]$ such that $f = 1$ on a neighborhood of $spt(||V||)$, and $f = 0$ on a neighborhood of $spt(X)\cap (\partial M)$. Note that $\delta V(f X) = \delta V(X)$ and $f X = 0$ at $\partial M$. Since, $f X$ and $-f X$ are both admissible, we conclude that $\delta V(-f X) = \delta V(f X) = 0$.

In particular, any $V \in \V_s(U, \gamma)$ is stationary with respect to variations supported in $U\setminus (\partial M)$. The advantage of working with definition \ref{stationary} in our setting is that
\begin{equation*}
||V||(\gamma) = 0, \text{ for every } V \in \V_s(M, \gamma).
\end{equation*}
This property turns out to be very important for the application of the boundary regularity theory of Allard \cite{al-2}. Its proof is the content of Lemma 6.4 of \cite{dl-r}, and consist of the evaluation of the first variation of $V$ at a vector field tangential to $\partial M$, whose flow looks like a contraction of a small tubular neighborhood of $\gamma$ in $M$, along geodesics emanating from $\gamma$.

\begin{prop}\label{pull-tight}
Let $\Pi\in \Pi_m^{\#}(Z_n(M,\gamma), P_0)$ be a class of sweepouts with
\begin{equation*}
\LL (\Pi ) >  \sup \{\M(P_0(x)) : x \in \partial I^m\}, 
\end{equation*}
and $\{\Phi_i\}_i\in \Pi$ be a critical sequence. Then, there exists $\{\Psi_{i}\}_i \in \Pi$, which is also min-max, and such that 
\begin{equation*}
Crit(\{\Psi_i\}_i)\subset Crit(\{\Phi_i\}_i) \cap \V_s(M, \gamma).
\end{equation*} 
\end{prop}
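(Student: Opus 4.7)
The plan is to follow the standard Almgren--Pitts pull-tight construction, as refined by Marques--Neves in the discrete setting and by De Lellis--Ramic in the fixed-boundary setting, with care taken so that the vector fields we flow by respect the boundary constraint in Definition \ref{stationary}. Concretely, I will produce a continuous assignment $V\mapsto X_V\in \X(M)$ from a suitable bounded subset of $\V_n(M)$ into admissible vector fields (supported away from $\gamma$ and non-exterior along $\partial M$), such that $X_V$ is nonzero precisely when $V\notin \V_s(M,\gamma)$ and in that case the flow of $X_V$ strictly decreases the mass of $V$ at a controlled rate. Flowing the currents $\Phi_i(x)$ for a time depending on $x$ will produce the tightened sequence $\Psi_i$.

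For the construction, fix $L_0$ with $\sup\{\M(P_0(x)):x\in\partial I^m\}<L_0<\LL(\Pi)$. Let
\[
A=\{V\in\V_n(M):||V||(M)\le \LL(\Pi)+1\}\setminus\{V\in\V_s(M,\gamma):||V||(M)\ge L_0\},
\]
which is an open subset of a compact metric space in the $\F$-topology. For each $V\in A$, since $V\notin\V_s(M,\gamma)$, Definition \ref{stationary} supplies $X_V\in\X(M)$ with $X_V|_\gamma=0$, $g(X_V,\nu)\ge 0$ on $\partial M$, and $\delta V(X_V)<0$. The first variation is continuous in $V$, so there is an $\F$-neighborhood $U_V$ of $V$ on which $X_V$ still has strictly negative first variation. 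Extract a countable locally finite subcover $\{U_{V_k}\}$ and a subordinate continuous partition of unity $\{\chi_k\}$, then set $X(W)=\sum_k \chi_k(W)\,X_{V_k}$ for $W\in A$. Cut off this construction using a continuous function $\rho:\V_n(M)\to [0,1]$ equal to $0$ on a neighborhood of $\V_s(M,\gamma)\cap\{||V||(M)\ge L_0\}$ and on $\{||V||(M)\le (L_0+\sup\M\circ P_0)/2\}$, and equal to $1$ on $\{||V||(M)\ge L_0\}\setminus$ (small $\F$-neighborhood of stationary varifolds). The resulting $\rho(W)\,X(W)$ is continuous in $W$ and admissible for every $W$.

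Let $\Phi^W_s$ denote the one-parameter family of diffeomorphisms generated by $\rho(W)X(W)$; since $X(W)|_\gamma=0$ and $X(W)$ is non-exterior along $\partial M$, each $\Phi^W_s$ fixes $\gamma$ pointwise and maps $M$ into itself. Define the continuous deformation
\[
H:[0,1]\times \Z_n(M,\gamma)\to \Z_n(M,\gamma),\qquad H(s,T)=(\Phi^{|T|}_s)_{\#}T,
\]
which is continuous in the $\F$-metric (because $W\mapsto X(W)$ is $\F$-continuous and pushforward by a fixed diffeomorphism is $\F$-continuous). Since $\Phi^W_s$ fixes $\gamma$ and $X(W)$ vanishes whenever $||W||(M)$ is below the cutoff, $H(s,T)$ stays in $\Z_n(M,\gamma)$, has boundary $\gamma$ whenever $T$ did, and is the identity on currents of small mass---in particular on all $T=P_0(x)$, $x\in\partial I^m$. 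Now apply the discretization machinery of Section \ref{sect-interpolation}: starting from a critical sequence $\{\Phi_i\}\in\Pi$, consider the continuous maps $I^m\ni x\mapsto H(s(x),\widetilde{\Phi_i}(x))$ where $\widetilde{\Phi_i}$ is the continuous interpolation from Theorem \ref{interpolation_boundary} and $s(x)\in[0,1]$ is chosen depending on the mass, then re-discretize via Theorem \ref{discretization} to obtain maps $\Psi_i$ on grids with $\M$-fineness tending to $0$, joined to $\Phi_i$ by an $m$-homotopy with arbitrarily small fineness (so $\{\Psi_i\}\in\Pi$). Because $H$ does not increase mass and strictly decreases it away from $\V_s(M,\gamma)$ in a locally uniform way, $L(\{\Psi_i\})\le L(\{\Phi_i\})=\LL(\Pi)$, so $\{\Psi_i\}$ is still critical, and any $V\in\mathrm{Crit}(\{\Psi_i\})$ must lie in $\V_s(M,\gamma)$, while a standard mass-comparison argument also places it in $\mathrm{Crit}(\{\Phi_i\})$.

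The main technical obstacle is the continuity of the assignment $V\mapsto X_V$ together with the constraint $X_V|_\gamma=0$ and $g(X_V,\nu)\ge 0$ on $\partial M$: the admissible cone of vector fields is convex, which is what makes the partition-of-unity gluing preserve admissibility, and this is precisely where the De Lellis--Ramic formulation of stationarity is essential. A secondary delicate point is matching the continuous deformation with the discrete setting while respecting the strict inequality $\LL(\Pi)>\sup\M\circ P_0$: this inequality is used to choose $L_0$ and the cutoff $\rho$ so that the deformation is the identity on the boundary maps $P_0$, guaranteeing that $\{\Psi_i\}$ stays in the homotopy class $\Pi$ relative to $P_0$. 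Lemma \ref{lemma4.1ofMN} is used to convert the $\F$-closeness of the tightened map on $\partial I^m$ back into bounds compatible with Definition \ref{homotopic}(b).
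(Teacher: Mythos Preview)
Your proposal is correct and follows essentially the same pull-tight construction as the paper's sketch: a continuous assignment of admissible vector fields built via the convexity of the cone $\{X:X|_\gamma=0,\ g(X,\nu)\ge 0\text{ on }\partial M\}$ and a partition of unity, followed by the flow map $H$, and then interpolation (Theorem \ref{interpolation_boundary}), discretization (Theorem \ref{discretization}), and Lemma \ref{lemma4.1ofMN} to return to the discrete setting and stay in $\Pi$. The only cosmetic difference is that the paper fixes the set $A_0=(A\cap\V_s(M,\gamma))\cup\{|P_0(x)|:x\in\partial I^m\}$ directly rather than using your mass-threshold cutoff $\rho$; note also that your set $A$ as written still contains stationary varifolds of small mass, so the sentence ``since $V\notin\V_s(M,\gamma)$'' needs the obvious correction (construct $X_V$ only on $A\setminus\V_s(M,\gamma)$ and let $\rho$ do the extension by zero).
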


This proposition is analogous to the classical pull-tight deformation of the Almgren-Pitts min-max theory. Its proof follows the same steps of that standard construction, such as in \cite{MN-willmore} or in \cite{C-DL}. For the reader's convenience, we include a sketch of the pull-tight construction adapted to our setting.

\begin{proof}[Sketch of the proof of Proposition \ref{pull-tight}]
Given $\{\Phi_i\}_i$, consider the following: $C = \sup \{ \M (\Phi_i(x))\}< \infty$, where the supremum is taken over all $i\in \N$ and $x$ in the domain of the maps $\Phi_i$. We construct a tightening map defined on
\begin{equation*}
A = \{V \in \V_n(M) : ||V||(M)\leq 2C\},
\end{equation*}
which is a compact set of varifolds, with fixed set the, also compact, region
\begin{equation*}
A_0 = \left( A \cap \V_s(M, \gamma)\right) \cup \{|P_0(x)| : x \in \partial I^m\}. 
\end{equation*}

Observe that for every $V \in A\setminus A_0$, there exists $X \in \X(M)$ such that $\delta V(X) <0$, $X|_{\gamma} = 0$ and $g(X,\nu) \geq 0$ at $\partial M$. Then, following the usual construction of the pull-tight deformation for our choice of fixed set, and using that the set of admissible vector fields is convex, we obtain smooth maps $\Omega_V : [0,1] \times M \rightarrow M$ satisfying the following properties:
\begin{enumerate}
\item[(i)] $\Omega_V(t,\cdot)$ maps $M$ diffeomorphically onto $\Omega_V(t,M)$, for all $V \in A$ and $t \in [0,1]$. Actually, it is the truncated flow of a non-exterior vector field that vanishes along $\gamma$;

\item[(ii)] $\Omega_V$ is continuous on $V \in A$; i.e., the smooth vector field described on (i) depends continuously on the varifold $V\in A$, the set $A$ being considered with the $\F$-metric, and the space $\X(M)$ with the topology of the $C^k$-seminorms (or simply with the $C^1$-topology);

\item[(iii)] $\Omega_V(t,x)=x$, whenever $t=0$ or $V \in A_0$;

\item[(iv)] $||\Omega_V(1,\cdot)_{\#} V||(M) < ||V||(M)$, for $V \in A\setminus A_0$;

\item[(v)] $\Omega_V(t,x)=x$,  for every $V \in A$, $x\in \gamma$, and $t\in [0,1]$.
\end{enumerate}
It follows from property (v) that, for all $V\in A$ and $t\in [0,1]$, we have
\begin{equation*}
\Omega_V(t,\cdot)_{\#}T \in \Z_n(M,\gamma),
\end{equation*}
whenever $T \in \Z_n(M,\gamma)$. Therefore, we obtain a map $H(t,T)$ defined, on pairs $t\in [0,1]$ and $T\in \Z_n(M,\gamma)\cap \{S : |S| \in A\}$, by the expression
\begin{equation*}
H(t,T) = \Omega_{|T|}(t,\cdot)_{\#}T \in \Z_n(M,\gamma).
\end{equation*}
This map has the same properties of its analogue in the proof in Section 15 of \cite{MN-willmore}, namely: it is continuous in the product topology, where $\Z_n(M,\gamma)$ is considered with the $\F$-metric topology, and satisfies:
\begin{itemize}
\item $\M(H(1,T))< \M(T)$, unless $|T| \in A_0$

\item $H(t,T) = T$, whenever $|T| \in A_0$, and $t \in [0,1]$

\item for every $\varepsilon>0$, there is a positive $\delta$ for which: if $x\in \partial I^m$, $t\in [0,1]$, and $\F(T, P_0(x))<\delta$, then $\F(H(t,T), P_0(x))<\varepsilon$. 
\end{itemize}

We apply this map $H$ to deform $\{\Phi_i\}_i$ into $\{\Psi_i\}_i$, the desired critical sequence, following the same steps in pages 766-768 of \cite{MN-willmore}. As observed by the authors, in that argument, they need to interpolate to obtain a true competitor. Our adaptation is possible because where Theorems 14.2 and 13.1, and Lemmas 4.1 and 7.8 of \cite{MN-willmore}, which are suited to cycles, are used in that proof, we can apply our Theorems \ref{interpolation_boundary} and \ref{discretization}, Lemma \ref{lemma4.1ofMN}, and the fact expressed in (\ref{brdy-values-sweepouts}) of section \ref{minmax-sect}, respectively. We note that our application of Theorem \ref{interpolation_boundary} is possible because $\partial \Phi_i(x) = \gamma$, for all $x\in dmn(\Phi_i)$ and large $i\in \N$. This was also observed in section \ref{minmax-sect}.

\end{proof}

\section{The almost minimizing condition}\label{sect-amc}

In this section, we introduce the notion of almost minimizing varifolds which is adequate to the present setting. As in the classical boundaryless scenario, it plays a crucial role in the regularity theory. After we describe the basic notions and verify that it shares very similar properties with that of \cite{pitts}, we present the parts of the Almgren-Pitts min-max program that involve the almost minimizing condition in some way. In subsection \ref{exist-reg-replacements}, we describe the adapted construction of comparison surfaces, and prove their regularity properties. In subsection \ref{comb-argument}, we briefly discuss the combinatorial argument. We begin with some notation.

\begin{defi}
Let $U\subset M$ be relatively open, $\varepsilon, \delta$ be positive numbers, and $\nu$ be either $\Flat_C$, $\F$, or $\M$. We use $\A(U;\varepsilon, \delta; \nu)$ to denote the set of all $T \in \Z_n(M, (M\setminus U)\cup \gamma)$ with $(\partial T)\llcorner U = [| \gamma |]\llcorner U$ and for which the following holds: if $T = T_1, T_2, \ldots, T_q \in \Z_n(M, (M\setminus U)\cup \gamma)$ satisfies
\begin{itemize}
\item $(\partial T_i)\llcorner U = [| \gamma |]\llcorner U$
\item $spt (T_i - T)\subset U$
\item $\nu (T_i, T_{i-1})\leq \delta$
\item $\M(T_i)\leq \M(T)+\delta$,
\end{itemize}
then, all $T_i$ have mass at least $\M(T)-\varepsilon$. In particular, $\M(T_q)\geq \M(T)-\varepsilon$.
\end{defi}

Next, we explain the almost minimizing condition that we apply.

\begin{defi}
Let $V\in \V_n(M)$ be a varifold in $M$. We say that $V$ is almost minimizing in $U$ if for every $\varepsilon >0$, there exist $\delta >0$ and $T \in \A(U;\varepsilon, \delta; \Flat_C)$ such that $\F_{U}(|T|, V)< \varepsilon$. 
\end{defi}

These notions are the analogues in our setting to the notions introduced in Section 3.1 of the pioneering work of Pitts \cite{pitts}. The first property that we observe is the stability of almost minimizing varifolds. More precisely, we have the following lemma.

\begin{lemma}\label{am-implies-stationary}
If $V\in \V_n(M)$ is almost minimizing in $U$, then $V\in \V_s(U, \gamma)$ in the sense of Definition \ref{stationary}. Moreover, $V$ is stable with respect to non-exterior variations in $U$ that are supported away from $\gamma$.
\end{lemma}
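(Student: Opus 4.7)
The natural strategy is contradiction by flow-and-discretize: a putative vector field with $\delta V(X)<0$ is converted into a discrete comparison path that violates the defining property of $\A(U;\varepsilon,\delta;\Flat_C)$. Suppose $V\notin\V_s(U,\gamma)$, so that there exists an admissible $X\in\X(M)$ (i.e., $spt(X)\subset U$, $X|_\gamma=0$, and $g(X,\nu)\geq 0$ on $\partial M$) with $\delta V(X)=-2\alpha<0$. Let $\phi_t$ denote the local flow of $X$. The non-exteriority $g(X,\nu)\geq 0$ guarantees that $\phi_t$ maps $M$ into itself for small $t\geq 0$, while $X|_\gamma=0$ makes $\phi_t$ fix $\gamma$ pointwise. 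Consequently, for any $T\in\Z_n(M,(M\setminus U)\cup\gamma)$ with $(\partial T)\llcorner U=[|\gamma|]\llcorner U$, the push-forward $(\phi_t)_\# T$ lies in the same space, satisfies the same boundary identity $(\partial(\phi_t)_\# T)\llcorner U=[|\gamma|]\llcorner U$, and has $spt((\phi_t)_\# T-T)\subset spt(X)\subset U$.

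The quantitative input is that, by the first variation formula and the continuity of $W\mapsto \delta W(X)$ on the set of varifolds with a priori bounded mass, there is $\varepsilon_0>0$ such that $\delta|T|(X)\leq -\alpha$ whenever $\F_U(|T|,V)<\varepsilon_0$. A standard Taylor expansion of $t\mapsto\M((\phi_t)_\# T)$ then produces $t_0>0$, depending only on $\|X\|_{C^2}$ and the a priori mass bound, for which $\M((\phi_t)_\# T)\leq \M(T)-\alpha t/2$ for all $t\in[0,t_0]$. Now pick $\varepsilon<\min\{\varepsilon_0,\alpha t_0/4\}$, let $\delta>0$ and $T\in\A(U;\varepsilon,\delta;\Flat_C)$ be supplied by the almost minimizing assumption with $\F_U(|T|,V)<\varepsilon$, and set $T_i=(\phi_{it_0/N})_\# T$ for $i=0,1,\ldots,N$. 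For $N$ sufficiently large the consecutive flat distances $\Flat_C(T_i-T_{i-1})$---bounded by a constant depending on $X$ times $(t_0/N)\M(T)$---fall below $\delta$, while the remaining defining conditions of $\A(U;\varepsilon,\delta;\Flat_C)$ (support, boundary identity, mass bound $\M(T_i)\leq\M(T)$) hold by construction. Hence $T_0,\ldots,T_N$ is an admissible comparison sequence, yet $\M(T_N)\leq\M(T)-\alpha t_0/2<\M(T)-\varepsilon$ contradicts the definition of $\A(U;\varepsilon,\delta;\Flat_C)$. This proves $V\in\V_s(U,\gamma)$.

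The stability statement follows by running the same discretize-the-flow machinery one order higher. For $X\in\X(M)$ with $spt(X)\subset U\setminus\gamma$ and $g(X,\nu)\geq 0$ on $\partial M$, the stationarity just established forces the first-order term in the Taylor expansion to satisfy $\delta V(X)\geq 0$ (with equality whenever $-X$ is also admissible, for instance when $X$ is tangent to $\partial M$ or compactly supported in the interior), so if $\delta^2 V(X,X)$ were strictly negative the associated flow would again furnish a forbidden mass-decreasing comparison sequence in $\A(U;\varepsilon,\delta;\Flat_C)$. The main technical point throughout---and the reason the hypotheses $X|_\gamma=0$ and $g(X,\nu)\geq 0$ at $\partial M$ are built into Definition \ref{stationary}---is that the discretized flow must preserve simultaneously the support restriction $spt(T_i-T)\subset U$, the fixed-boundary identity $(\partial T_i)\llcorner U=[|\gamma|]\llcorner U$, and the flat-distance smallness demanded by $\A(U;\varepsilon,\delta;\Flat_C)$.
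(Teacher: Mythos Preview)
Your argument for the first part—$V\in\V_s(U,\gamma)$—is correct and is exactly the adaptation of Pitts' Theorem 3.3 that the paper points to: the only extra observation needed is that the flow of an admissible $X$ preserves the constraint $(\partial T)\llcorner U=[|\gamma|]\llcorner U$ because $X|_{\gamma}=0$, and keeps currents inside $M$ because $g(X,\nu)\geq 0$ on $\partial M$.

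There is, however, a gap in your stability argument. You correctly note that stationarity gives only $\delta V(X)\geq 0$ for a non-exterior $X$ supported away from $\gamma$, since $-X$ need not be admissible. But then the sentence ``so if $\delta^2 V(X,X)$ were strictly negative the associated flow would again furnish a forbidden mass-decreasing comparison sequence'' does not follow: if $\delta V(X)>0$, the Taylor expansion $\M((\phi_t)_{\#}T)=\M(T)+t\,\delta|T|(X)+O(t^2)$ has a positive first-order term for $T$ close to $V$, and the flow \emph{increases} mass for small $t>0$ regardless of the sign of the second variation. The second-order contradiction only works once you know $\delta V(X)=0$.

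The missing step is precisely the observation recorded in the paper right after Definition~\ref{stationary}: since $\partial M$ is strictly convex, the maximum principle forces $spt(||V||)\cap\partial M\subset\gamma$; hence for any $X$ supported away from $\gamma$ one can multiply by a cutoff $f$ equal to $1$ near $spt(||V||)$ and $0$ near $spt(X)\cap\partial M$, so that $fX$ vanishes on $\partial M$, both $\pm fX$ are admissible, and $\delta V(X)=\delta V(fX)=0$. With this in hand, your discretize-the-flow machinery at second order goes through: for the approximating $T$ one has $|\delta|T|(X)|$ arbitrarily small while $\delta^2|T|(X,X)$ is bounded above by a fixed negative constant, and the resulting finite sequence $T_i=(\phi_{it_0/N})_{\#}T$ contradicts $T\in\A(U;\varepsilon,\delta;\Flat_C)$ exactly as in the first part.
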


The first part of the proof of Theorem 3.3 of \cite{pitts}, the stationarity of $V$, can adapted in a straightforward manner in the verification of Lemma \ref{am-implies-stationary}. Indeed, it is clear that the $1$-parameter family of diffeomorphisms induced by such vector fields preserve the boundary condition.

As discussed in \cite{pitts}, see pages 97 and 98, for the existence theory it is more convenient to consider sequences that are fine in $\M$. But for the construction of replacements, which is discussed later in this section, and for all the regularity theory, the norm induced by $\Flat_C$ is more suitable. Since the notion of almost minimizing varifolds is used in both part of the argument, we must check that it is essentially the same of that that one would obtain using approximating currents in $\A(U;\varepsilon, \delta; \F)$ or $\A(U;\varepsilon, \delta; \M)$, instead of those that we used. This fact is the content of the following theorem.

\begin{thm}\label{am-equiv-defs}
Let $U\subset M$ be relatively open, and $V\in \V_n(M)$. Each of these items implies the one that follows it.
\begin{enumerate}
\item[(I)] $V$ is almost minimizing in $U$

\item[(II)] for every $\varepsilon >0$, there exists a pair $(\delta, T)$, where $\delta$ is a positive number and $T\in \A(U;\varepsilon, \delta; \F)$ with $\F_{U}(|T|, V)< \varepsilon$

\item[(III)] for every $\varepsilon >0$, there exists a pair $(\delta, T)$, where $\delta$ is a positive number and $T\in \A(U;\varepsilon, \delta; \M)$ with $\F_{U}(|T|, V)< \varepsilon$

\item[(IV)] $V$ is almost minimizing in any $W\subset\subset U$, relatively open in $M$.
\end{enumerate} 
\end{thm}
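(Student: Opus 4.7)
The proof will follow the strategy of Pitts' Theorem 3.9 in \cite{pitts}, adapted to the fixed-boundary setting. The implications (I) $\Rightarrow$ (II) $\Rightarrow$ (III) are inclusions between the sets $\A(U;\varepsilon,\delta;\nu)$ for the three proximity measures, obtained by direct comparison on currents whose differences are supported in a compact subset of $U$. The implication (III) $\Rightarrow$ (IV) is the substantive step, where Lemma \ref{interp.flat-to-mass} plays the decisive role.

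For (I) $\Rightarrow$ (II), the goal is to show that $\A(U;\varepsilon,\delta;\Flat_C) \subset \A(U;\varepsilon,\delta';\F)$ for a suitable $\delta'$. On currents whose differences are supported in $U \subset C$, we have $\F(S,T) \geq \Flat_M(T-S)$ by definition, and $\Flat_M$ and $\Flat_C$ are comparable up to a multiplicative constant by a standard deformation-theorem argument (the optimal flat decomposition of a current supported in a compact subset of $U$ can be pushed to one supported in $C$ with controlled mass loss). Hence any $\F$-fine competing sequence is $\Flat_C$-fine after rescaling $\delta$, so the witness $T$ for (I) also witnesses (II), with the varifold condition $\F_U(|T|,V) < \varepsilon$ unchanged. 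For (II) $\Rightarrow$ (III), the analogous comparison $\F(S,T) \leq c\,\M(T-S)$ on integral currents of bounded mass does the same job: the term $\Flat_M(T-S)$ is trivially controlled by $\M(T-S)$, while the varifold distance $\F(|S|,|T|)$ is controlled by $\M(T-S)$ via a standard estimate. Rescaling $\delta$ yields the desired inclusion.

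The main step is (III) $\Rightarrow$ (IV). Fix $W \subset\subset U$ relatively open and $\varepsilon > 0$. Apply (III) at parameter $\varepsilon/2$ to obtain $\delta_0 > 0$ and $T \in \A(U;\varepsilon/2,\delta_0;\M)$ with $\F_U(|T|,V) < \varepsilon/2$; in particular $\F_W(|T|,V) \leq \F_U(|T|,V) < \varepsilon$. I claim that $T \in \A(W;\varepsilon,\delta;\Flat_C)$ for $\delta$ sufficiently small. Apply Lemma \ref{interp.flat-to-mass} with ambient relatively open set $U$, compact subset $K = \overline{W} \subset\subset U$, mass bound $L = \M(T) + 1$, and target mass fineness $\delta_0/2$, obtaining $\varepsilon_1 > 0$; set $\delta \leq \min(\varepsilon_1, \delta_0/2)$. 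Given any competing sequence $T = T_0, T_1, \ldots, T_q$ in $W$ with $\Flat_C$-fineness $\delta$, mass bound $\M(T_i) \leq \M(T) + \delta$, and $spt(T_i - T) \subset \overline{W}$, apply Lemma \ref{interp.flat-to-mass} to each consecutive pair $(T_{i-1}, T_i)$. This produces interpolating chains in $\mga$ with correct boundary slice $(\partial \cdot)\llcorner U = [|\gamma|]\llcorner U$, supports of differences inside $U$, consecutive mass differences at most $\delta_0/2$, and masses bounded by $\M(T) + \delta_0$. Concatenating these interpolations produces a single $\M$-fine competing sequence in $U$ admissible for the hypothesis $T \in \A(U;\varepsilon/2,\delta_0;\M)$, and hence every current in the chain, in particular each $T_i$, has mass at least $\M(T) - \varepsilon/2 > \M(T) - \varepsilon$, proving the claim.

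The principal technical difficulty is the bookkeeping in this last argument: ensuring that the concatenated interpolation forms a single admissible sequence with simultaneously compatible boundary, support, mass, and fineness constraints. The strict containment $W \subset\subset U$ is essential, since the interpolating currents may extend slightly outside $W$ but must stay within $U$ in order for the hypothesis (III) to apply; the compact set $K = \overline{W}$ must genuinely be compact in $U$ for Lemma \ref{interp.flat-to-mass} to yield the controlling $\varepsilon_1$.
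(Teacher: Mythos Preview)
Your approach matches the paper's exactly: the author simply records that the proof ``follows the exact same lines of that of Theorem 3.9 in \cite{pitts}'' and ``is a direct application of our Lemma \ref{interp.flat-to-mass}, instead of Pitts' Lemma 3.8'', with no further details. Your identification of (I)$\Rightarrow$(II)$\Rightarrow$(III) as the easy inclusions and (III)$\Rightarrow$(IV) as the step driven by Lemma \ref{interp.flat-to-mass} is precisely Pitts' scheme.

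There is one bookkeeping slip in your (III)$\Rightarrow$(IV) argument that, as written, breaks the step. You invoke Lemma \ref{interp.flat-to-mass} with mass bound $L=\M(T)+1$ and target fineness $\delta_0/2$; the lemma then only guarantees $\M(T_j)\leq L+\delta_0/2=\M(T)+1+\delta_0/2$ for the interpolating currents. To feed the concatenated chain back into the hypothesis $T\in\A(U;\varepsilon/2,\delta_0;\M)$ you need $\M(T_j)\leq\M(T)+\delta_0$, and since $\delta_0$ is handed to you (and is typically small), this fails. The fix is to take $L=\M(T)+\delta_0/2$: the competing sequence in $W$ already satisfies $\M(T_i)\leq\M(T)+\delta\leq\M(T)+\delta_0/2=L$ once $\delta\leq\delta_0/2$, so the lemma applies, and now the interpolants obey $\M(T_j)\leq L+\delta_0/2=\M(T)+\delta_0$, which is exactly the admissibility bound. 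With this correction the concatenation argument goes through as you describe.
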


The proof of this result follows the exact same lines of that of Theorem 3.9 in \cite{pitts}. It is a direct application of our Lemma \ref{interp.flat-to-mass}, instead of Pitts' Lemma 3.8. We omit the details here.

\subsection{Existence and regularity of replacements}\label{exist-reg-replacements} In this part, we adapt an important ingredient of the regularity theory; we construct comparison hypersurfaces to almost minimizing varifolds.

Let $V$ be almost minimizing in $U$ and $K\subset U$ be a compact subset. Consider sequences $\{\varepsilon_i\}$ and $\{\delta_i\}$ of positive numbers decreasing to $0$, and $T_i \in \A(U;\varepsilon_i, \delta_i; \Flat_C)$, with $\F_{U}(|T_i|, V) \leq \varepsilon_i$.

Fix $i \in \N$, and consider all finite sequences $\{T^j_i\} \subset \Z_n(M, (M\setminus U)\cup \gamma)$, $1\leq j \leq q$ ($q$ is any positive integer, and it is not fixed), satisfying
\begin{itemize}
\item $(\partial T^j_i)\llcorner U = [| \gamma |]\llcorner U$
\item $spt (T^j_i - T_i)\subset K$
\item $\Flat_C (T^j_i - T^{j-1}_i)\leq \delta_i$
\item $\M(T^j_i)\leq \M(T_i)+\delta_i$.
\end{itemize}
There is one such sequence for which the final mass $\M(T^q_i)$ is minimal. The existence of such a minimizer is a consequence of the compactness theorem for integer rectifiable currents. The existence of a boundary in $U$ is not an issue at this point because, in $U$, all such currents have the same boundary $\gamma$. Let us use $S_i$ to denote the final current of such an optimal sequence, $S_i = T^q_i$. Next, we list some properties of the currents $S_i$.
\begin{enumerate}
\item[(a)] $\M (T_i) - \varepsilon_i \leq \M (S_i) \leq \M (T_i)$

\item[(b)] $S_i \in \A(U;\varepsilon_i, \delta_i; \Flat_C)$

\item[(c)] $S_i \llcorner (\R^N \setminus K) = T_i \llcorner (\R^N \setminus K)$

\item[(d)] $\M(S_i) \leq \M(S)$, for all $S\in \Z_n(M, (M\setminus U)\cup \gamma)$ such that
\begin{equation*}
(\partial S)\llcorner U = [| \gamma |]\llcorner U, \ \ spt (S - T_i)\subset K,\text{ and }\Flat_C (S - S_i)\leq \delta_i
\end{equation*}

\item[(e)] $|S_i|$ is stable in $int(K)\setminus (\partial M)$; i.e., the second variation of the mass $\delta^2|S_i|$ is non-negative with respect to all vector fields $X$ supported in $int(K)\setminus (\partial M)$. Moreover, $|S_i|\in \V_s(int(K), \gamma)$, see definition \ref{stationary}.

\item[(f)] for every $p \in int(K)$, including points in $(\partial M)\cap int(K)$, there exists $r>0$ for which: $\M(S_i) \leq \M(S)$, for all $S\in \Z_n(M, (M\setminus U)\cup \gamma)$ such that $(\partial S)\llcorner U = [| \gamma |]\llcorner U$, and $spt (S - S_i)\subset clos(B^N_r(p))$, where $B^N_r(p)$ denotes the open Euclidean ball centered at $p$ of radius $r$;

\item[(g)] using the notation of the previous item, we conclude that: 
\begin{equation*}
\M (S_i) \leq \M (T+S_i), \text{ for all } T \in \Z_n(M) \text{ with } spt (T)\subset clos(B^N_r(p)).
\end{equation*}

\item[(h)] $(S_i) \llcorner G_n(int(K))$ is, up to multiplicity, the integral current corresponding to $\Sigma_i$, where the latter is an embedded smooth minimal hypersurface except for a closed set $sing(\Sigma_i)$ of Hausdorff dimension at most $n-7$, and $sing(\Sigma_i) = \overline{\Sigma_i} \setminus \Sigma_i$. Moreover, $sing(\Sigma_i)\cap (\partial M) = \varnothing$ and $(\partial \Sigma_i)\cap int(K) = \gamma\cap int(K)$. In particular, it follows that the connected components of $\Sigma_i$ that intersect $\partial M$ have multiplicity one. 
\end{enumerate}

Observe that item (g) above says that $S_i$ is locally area-minimizing with boundary $\gamma$ in $int(K)$. Then, we note on (h) the regularity of such currents. More precisely, the interior regularity of $S_i$ at interior points follows from the classical theory of codimension one area-minimizing currents. The boundary regularity was established by Allard, see the corollary announced in \cite{al-1} and the theory developed in \cite{al-2}. Allard's results can be applied at this point because the boundary of the space $M$ is strictly convex. This application was also observed in the proof of Corollary 9.7 of \cite{dl-r}. See also \cite{DS}.

The property stated in item (e) was not applied in the arguments of the subsequent facts. The verification of the validity of (e) uses (d). First of all, we observe that the varifold $|S_i|$ is stationary in the open subset $int(K)\setminus (\partial M)$. If this were not true, we would have a vector field $X \in \mathfrak{X}(M)$ supported in $int(K)\setminus (\partial M)$, for which $\delta |S_i|(X)<0$. The variation of $S_i$ in the direction of $X$ would produce a competitor in the sense of (d), since it would preserve the boundary constraint, of least area. Then, we conclude the stationarity part. Once we know this, the argument by contradiction to prove stability is analogous; using $\delta |S_i|(X)=0$ and $\delta^2 |S_i|(X,X)<0$. Moreover, this argument can also be used to prove the second claim of (e), namely $|S_i|\in \V_s(int(K), \gamma)$.

Let $V_i$ be the varifold obtained by
\begin{equation}\label{pre-replacements}
V_i = |S_i|\llcorner G_n(U) + V\llcorner \big( G_n(\R^N)\setminus G_n(U)\big).
\end{equation}
Suppose, up to restriction to a subsequence, that the $V_i$ converge as varifolds to $\tilde{V}$. Any such limit is called a replacement for $V$ in $K$.

Observe that we were able to perform the above construction in $K$ using that $V$ is almost minimizing in some relatively open subset $U$ containing $K$ only. Thus, we conclude the following:

\begin{prop}\label{exist-replacements}
Let $U\subset M$ be relatively open, and $K\subset U$ be compact. If $V\in \V_n(M)$ is almost minimizing in $U$, then it has a replacement in $K$.
\end{prop}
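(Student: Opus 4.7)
The plan is to carry out, with appropriate justifications, exactly the construction already described in the paragraphs immediately preceding the statement of the proposition. In particular, the list of properties (a)--(h) of the $S_i$ was already read off from that construction, so the task reduces to verifying that each step of the construction is legitimate.

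First, I would invoke the almost-minimizing property of $V$ to choose sequences $\varepsilon_i, \delta_i \searrow 0$ and integral currents $T_i \in \A(U;\varepsilon_i,\delta_i;\Flat_C)$ with $\F_U(|T_i|, V) < \varepsilon_i$; the mass bound $\M(T_i) \leq ||V||(M) + \varepsilon_i$ follows. Next, for each fixed $i$ I would establish existence of the optimal final current $S_i$. Let $\mathcal{S}_i \subset \Z_n(M,(M\setminus U)\cup\gamma)$ consist of those $S$ that can be realized as the last term $T_i^q$ of some admissible chain $T_i = T_i^0, T_i^1, \ldots, T_i^q = S$ satisfying the four bulleted constraints in the excerpt, and set $m_i = \inf\{\M(S) : S \in \mathcal{S}_i\}$. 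The set $\mathcal{S}_i$ is nonempty since $T_i \in \mathcal{S}_i$. Choose a minimizing sequence $T_i^{(k)} \in \mathcal{S}_i$; since each $T_i^{(k)}$ coincides with $T_i$ off $K \subset\subset U$, masses and boundary masses are uniformly bounded, and the compactness theorem for integer rectifiable currents produces a subsequential $\Flat$-limit $S_i$. The support condition $spt(T_i^{(k)} - T_i) \subset K$ and the boundary identity $(\partial T_i^{(k)}) \llcorner U = [|\gamma|] \llcorner U$ pass to the limit, and lower semicontinuity of mass gives $\M(S_i) \leq m_i$.

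The crucial point, which is the main obstacle, is to argue that $S_i$ itself lies in $\mathcal{S}_i$; the admissible class is defined through a finite $\Flat_C$-chain condition rather than by a variational principle automatically closed under weak limits, so the direct method does not immediately give $S_i \in \mathcal{S}_i$. The resolution is a chain-extension trick: for $k$ sufficiently large one has $\Flat_C(T_i^{(k)} - S_i) < \delta_i$ and $\M(S_i) \leq m_i + \delta_i \leq \M(T_i) + \delta_i$, so appending $S_i$ as one additional step to a chain already realizing $T_i^{(k)} \in \mathcal{S}_i$ yields an admissible chain ending at $S_i$. Thus $S_i \in \mathcal{S}_i$ and $\M(S_i) = m_i$.

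Finally, I would define $V_i$ by the formula in (\ref{pre-replacements}). The estimate $\M(S_i) \leq \M(T_i) \leq ||V||(M) + \varepsilon_i$ gives $||V_i||(M) \leq 2\,||V||(M) + 1$ for large $i$, and the standard compactness theorem for varifolds of uniformly bounded weight then yields, along a subsequence, a varifold limit $\tilde V \in \V_n(M)$. By definition this $\tilde V$ is a replacement for $V$ in $K$, completing the argument. All remaining properties (a)--(h) of the $S_i$ stated in the excerpt are inherited directly from the minimization, and are not needed at this stage beyond the uniform mass bound used above.
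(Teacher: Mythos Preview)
Your proof is correct and follows exactly the construction the paper sketches before the proposition; the paper's only justification for the existence of $S_i$ is the sentence ``the existence of such a minimizer is a consequence of the compactness theorem for integer rectifiable currents,'' and your chain-extension trick (appending the $\Flat_C$-limit as one more admissible step) is the standard way to make that rigorous. One minor imprecision: the bound $\M(T_i)\le ||V||(M)+\varepsilon_i$ does not follow, since $\F_U$ only controls $|T_i|$ over $G_n(U)$ and says nothing about $T_i$ on $M\setminus U$; this is harmless, however, because $V_i$ uses only $|S_i|\llcorner G_n(U)$, and from $spt(S_i-T_i)\subset K\subset U$ together with $\M(S_i)\le\M(T_i)$ one gets $||S_i||(U)\le ||T_i||(U)$, which with $\F_U(|T_i|,V)<\varepsilon_i$ already yields the uniform bound on $||V_i||(M)$ needed for varifold compactness.
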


Next, we list some properties of these replacements. Recall the notation $\V_s(M, \gamma)$ introduced in Section \ref{sect-pull-tight} to denote the set of varifolds that are stationary with respect to non-exterior variations vanishing along $\gamma$.

\begin{thm}\label{replacement-properties}
Let $U\subset M$ be relatively open, $V \in \V_n(M)$ be almost minimizing in $U$, $K\subset U$ be compact, and $\tilde{V}$ be a replacement of $V$ in $K$. Then:
\begin{enumerate}
\item[(i)] $V\llcorner G_n(\R^N\setminus K) = \tilde{V} \llcorner G_n(\R^N\setminus K)$;

\item[(ii)] $\tilde{V}$ is almost minimizing in $U$;

\item[(iii)] $||V||(M) = ||\tilde{V}||(M)$;

\item[(iv)] $\tilde{V}\llcorner G_n(int(K))$ is, up to multiplicity, a stable minimal hypersurface $\Gamma$ except for a closed set $sing(\Gamma)$ of Hausdorff dimension at most $n-7$, with $(\partial M) \cap sing(\Gamma) = \varnothing$, and $(\partial \Gamma)\cap int(K) = \gamma \cap int(K)$. 

In particular, the connected components of $\Gamma$ which intersect $\partial M$ have multiplicity one;

\item[(v)] if $V\in \V_s(M, \gamma)$, then $\tilde{V} \in \V_s(M, \gamma)$.
\end{enumerate}
\end{thm}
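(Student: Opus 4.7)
The plan is to treat the five items in turn, using the construction $V_i = |S_i|\llcorner G_n(U) + V\llcorner (G_n(\R^N)\setminus G_n(U))$ together with the properties (a)--(h) of the optimal currents $S_i$ established just above the theorem.

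Items (i)--(iii) are measure-theoretic. For (i), $S_i - T_i$ is supported in $K$, so $|S_i| = |T_i|$ on $G_n(U\setminus K)$; combining this with $\F_U(|T_i|, V) \to 0$ and the identity $V_i \equiv V$ on $G_n(\R^N\setminus U)$ from the construction, passing to the limit yields $\tilde V = V$ on $G_n(\R^N\setminus K)$. For (ii), the sequence $\{S_i\}$ itself witnesses the almost minimizing property of $\tilde V$: property (b) gives $S_i \in \A(U;\varepsilon_i, \delta_i; \Flat_C)$ with $\varepsilon_i, \delta_i \searrow 0$, and $\F_U(|S_i|, \tilde V) = \F_U(V_i, \tilde V) \to 0$ by varifold convergence. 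For (iii), since $S_i = T_i$ off $U$, property (a) reduces to $\bigl|\,||S_i||(U) - ||T_i||(U)\bigr| \leq \varepsilon_i$, and $\F_U$-convergence gives $||T_i||(U) \to ||V||(U)$, whence $||V_i||(U) \to ||V||(U)$; combined with $||V_i||(M\setminus U) = ||V||(M\setminus U)$ from the construction, the total masses converge, which together with weak-$\ast$ convergence forces $||\tilde V||(M) = ||V||(M)$.

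Item (iv) is the main point and is where I expect the chief difficulty. Here $V_i \llcorner G_n(int(K)) = |S_i| \llcorner G_n(int(K))$, and property (h) describes each $|S_i|$ on $int(K)$ as an integer multiple of a smooth minimal hypersurface with the required singular-set bound, boundary $\gamma \cap int(K)$, no singular points on $\partial M$, and multiplicity one on components meeting $\partial M$; property (e) furnishes uniform stability. Away from $\partial M$ the Schoen--Simon compactness theorem for stable codimension-one minimal hypersurfaces with uniformly bounded area transfers the desired regularity to the limit. At interior boundary points $p\in (\partial M)\cap int(K) \setminus \gamma$, strict convexity of $\partial M$ combined with the maximum principle precludes concentration of $|S_i|$ (and hence of $\tilde V$) there. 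The delicate case is at points of $\gamma \cap int(K)$: one invokes Allard's boundary regularity in the strictly convex setting, following Section~6.4 of \cite{dl-r}, after first verifying via item (v) and Lemma~6.4 of \cite{dl-r} that $\tilde V$ assigns no mass to $\gamma$.

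For (v), combine (ii) and Lemma \ref{am-implies-stationary} to get $\tilde V \in \V_s(U, \gamma)$, and use (i) to transport the hypothesis $V \in \V_s(M,\gamma)$ over to $\tilde V$ off $K$. Given any admissible $X \in \X(M)$, choose a cutoff function $\chi$ with $\chi \equiv 1$ on a neighborhood of $K$ and $spt(\chi) \subset U$, and decompose $X = \chi X + (1-\chi)X$. The first piece is admissible and compactly supported in $U$, so $\delta \tilde V(\chi X) \geq 0$ by $\tilde V \in \V_s(U,\gamma)$; for the second, the integrand $\text{div}_S((1-\chi)X)$ vanishes on a neighborhood of $K$, and since $\tilde V = V$ there, $\delta \tilde V((1-\chi)X) = \delta V((1-\chi)X) \geq 0$. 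Summing gives $\delta \tilde V(X) \geq 0$, as required.
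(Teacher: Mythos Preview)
Your treatment of (i)--(iii) and (v) is correct and essentially matches the paper: the paper is terser on (i)--(iii), and for (v) it uses the same partition-of-unity decomposition $X = X_1 + X_2$ with $spt(X_1) \subset M\setminus K$ and $spt(X_2) \subset U$ that you describe.

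The gap is in (iv), at points of $\gamma \cap int(K)$. You propose to apply Allard's boundary regularity directly to the limit $\tilde V$, but Allard's theorem requires the density of $\tilde V$ at the boundary point to be close to $1/2$, and nothing in your outline establishes this. The observation $||\tilde V||(\gamma) = 0$ (for which, incidentally, the forward reference to (v) is unnecessary: item (ii) together with Lemma~\ref{am-implies-stationary} already yields $\tilde V \in \V_s(U, \gamma)$) gives no density control. Nor does Allard's theorem by itself deliver the multiplicity-one conclusion for components meeting $\partial M$; a priori several sheets of the $\Sigma_i$ could pile up near $\gamma$.

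The paper does not apply Allard to $\tilde V$. It works with the approximating hypersurfaces $\Sigma_i$ from property~(h) and invokes two results from \cite{dl-r} that you have not used:
\begin{itemize}
\item the \emph{wedge lemma} (Lemma~\ref{lemma8.1-dr}, i.e.\ Lemma~8.1 of \cite{dl-r}): since $|S_i| \in \V_s(int(K), \gamma)$ by property~(e), on each $U_2 \subset\subset int(K)$ the supports $\Sigma_i \cap U_2$ lie in a fixed compact $K_1 \subset \overline{U_2}$ meeting $\gamma$ at a uniform opening angle $\theta < \pi/2$;
\item the \emph{compactness theorem} for stable minimal hypersurfaces with prescribed boundary (Theorem~\ref{thm7.4-dr}, i.e.\ Theorem~7.4 of \cite{dl-r}): with the wedge containment and the uniform area bound, this theorem transfers to the limit all of the desired conclusions at once---the singular-set dimension bound, $\partial \Gamma \cap U_2 = \gamma \cap U_2$, smoothness up to $\partial M$, and multiplicity one on components touching $\partial M$.
\end{itemize}
Your reference to ``Section~6.4 of \cite{dl-r}'' points to the boundary curvature estimates that underlie Theorem~\ref{thm7.4-dr}, but it is the packaged compactness statement, together with the wedge lemma, that constitutes the missing argument; the wedge lemma in particular is absent from your outline and is what makes the compactness theorem applicable.
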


The proof of this theorem involves applications of some of the main results obtained by De Lellis and Ramic in \cite{dl-r}. In order to show how their ideas can be used for the objects that we have in hand, we recall some notions and fact introduced in that work. Let us start with an important definition. 

\begin{defi}\label{opening-dr}
Let $K_1\subset M$ be closed and $\theta \in (0, \pi/2)$. We say that $K_1$ meets $\gamma$ at an opening angle at most $\theta$ if: $K_1\cap (\partial M) = K_1\cap \gamma$, and for every $C^1$ curve $\alpha : [0,1]\rightarrow K_1$, with $p = \alpha (0) \in \gamma\cap K_1$, we have
\begin{equation*}
|(T_{p}\partial M)(\alpha^{\prime}(0))|\leq |(T_{p}\partial M)^{\perp} (\alpha^{\prime}(0))|\cdot \tan \theta,
\end{equation*}
where $T_{p}\partial M$ and $(T_{p}\partial M)^{\perp}$ represent the orthogonal projections of $T_pM$ over each one of the corresponding subspaces. 
\end{defi}

The next result is Lemma 8.1 of \cite{dl-r}, it shows that every varifold which is stationary with respect to non-exterior variations that vanish along $\gamma$, see our Definition \ref{stationary}, is contained in a wedge-like region along the submanifold $\gamma$, in the sense of the above definition.

\begin{lemma}\label{lemma8.1-dr}
Let $M^{n+1}$, $\gamma^{n-1}\subset \partial M$ be spaces as before, and $U_2\subset\subset  U_1$ two relatively open subsets of $M$, or $U_1=U_2=M$. Then, there exist a constant $\theta \in (0, \pi/2)$ and a compact set $K_1 \subset \overline{U_2}$ satisfying the following statements:
\begin{enumerate}
\item[(a)] $K_1$ meets $\gamma$ at an opening angle at most $\theta$

\item[(b)] $spt(||V||)\cap U_2 \subset K_1$, for every $V\in \V_s(U_1, \gamma)$.
\end{enumerate} 
\end{lemma}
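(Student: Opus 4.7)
The plan is to build $K_1$ as the intersection of $\overline{U_2}$ with a ``wedge tube'' around $\gamma$, combined with the part of $\overline{U_2}$ at uniform positive distance from $\partial M\setminus \gamma$, and to show that $K_1$ captures the support of every stationary $V$ by two separate arguments exploiting strict convexity. In the local coordinates $(x,s,t)$ at points of $\gamma$ from Section \ref{sect-local-interpolation}, the wedge tube would be the closure of the union, over $p\in\gamma$, of the images $E_p(\{|s|\le \tan\theta\cdot t,\ r_p\le r_0\})$; this is closed in $M$, meets $\partial M$ exactly along $\gamma$, and realizes the opening angle condition (a). The compact $K_1$ is then this wedge tube intersected with $\overline{U_2}$, unioned with the closed subset of $\overline{U_2}$ at distance at least $\eta$ from $\partial M\setminus \gamma'$ for a fixed small tube $\gamma'$ around $\gamma$ in $\partial M$. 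Closedness in the compact set $\overline{U_2}$ gives compactness of $K_1$; the parameters $\theta,r_0,\eta$ must be chosen independently of $V$.

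For the ``away from $\gamma$'' estimate I would fix any compact set $K'\subset \partial M\setminus \gamma$ inside $\overline{U_2}$ and build, near each $p\in K'$, smooth hypersurfaces in $M$ tangent to $\partial M$ from the inside which, by strict convexity of $\partial M$, serve as one-sided barriers for the maximum principle applied to varifolds stationary with respect to non-exterior variations. By the same reasoning used after Definition \ref{stationary} to conclude $\operatorname{spt}(\|V\|)\cap \partial M\subset \gamma$, the support of any $V\in\V_s(U_1,\gamma)$ cannot touch these barriers from outside. A finite covering argument over $K'$ produces the required $V$-independent $\eta>0$.

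The wedge condition near $\gamma$ is the delicate step; I would argue by contradiction. Assume $V\in\V_s(U_1,\gamma)$ has support at some point $q_0$ whose $E_p$-coordinates $(x_0,s_0,t_0)$ lie in the bad wedge $\{|s|>\tan\theta\cdot t\}$ with $r_p(q_0)$ arbitrarily small. Test $V$ against the vector field $X=-\phi(x,s,t)\,s\,\partial/\partial s$, where $\phi\ge 0$ is a smooth bump supported inside one coordinate chart and concentrated near $q_0$. Since $\partial/\partial s$ is tangent to $\partial M$ and $X$ vanishes on $\gamma$, the admissibility hypotheses of Definition \ref{stationary} are satisfied, so $\delta V(X)\ge 0$. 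A direct computation using the Christoffel estimates recalled in Section \ref{sect-local-interpolation} yields
\begin{equation*}
\operatorname{div}_P X = -\phi\,|P(\partial_s)|^2 + s\cdot O(|\nabla\phi|+\phi\,\|\Gamma\|)
\end{equation*}
for every unoriented $n$-plane $P$. Integrating against $V$, choosing $\phi$ peaked in the bad wedge, and letting $r_0\to 0$ and $\theta\to 0$, the main negative term dominates the error on the bad wedge; hence $\delta V(X)<0$, contradicting stationarity.

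The main obstacle is precisely this divergence estimate: the Christoffels near $\gamma$ are only uniformly bounded in terms of $\|II_{\gamma\subset \partial M}\|$ and $\|II_{\partial M\subset M}\|$ (not small), so one must exploit the definite signs coming from strict convexity of $\partial M$, which gives a one-sided bound on $\Gamma_{ab}^t$, together with a careful choice of cutoff $\phi$ depending only on $r_p$ so that the $|\nabla\phi|$ contribution becomes essentially radial and harmless. Once the strict inequality $\delta V(X)<0$ on the bad wedge is established, the set $K_1$ assembled above contains $\operatorname{spt}(\|V\|)\cap U_2$ for every $V\in\V_s(U_1,\gamma)$, is compact as a closed subset of $\overline{U_2}$, and satisfies the opening-angle condition (a) by its very construction, completing the proof.
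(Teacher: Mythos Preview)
The paper does not give its own proof of this lemma; it is quoted verbatim as Lemma 8.1 of \cite{dl-r} and used as a black box, so there is no in-paper argument to compare against. Your construction of $K_1$ as a wedge tube over $\gamma$ unioned with the part of $\overline{U_2}$ at uniform distance from $\partial M$ is the right shape, and your barrier/maximum-principle argument away from $\gamma$ is correct and standard.

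The wedge step near $\gamma$, however, has a real gap, and it is precisely the one you flag as ``the main obstacle'' without resolving. Your test field $X=-\phi\, s\,\partial_s$ is tangent to $\partial M$, so convexity of $\partial M$ enters only through the Christoffel error terms you write as $s\cdot O(\phi\|\Gamma\|)$; the leading term $-\phi\,|P(\partial_s)|^2$ carries no information from $II_{\partial M}$. But that leading term vanishes whenever the approximate tangent plane $P$ is orthogonal to $\partial_s$, and there is no a priori lower bound on $|P(\partial_s)|$ along $\operatorname{spt}\|V\|$. Concretely, in the flat model $T_p^+M$ (convexity switched off) the varifold supported on the half-hyperplane $\{s=s_0\}$ lies in $\V_s$, sits entirely in the bad wedge for every $\theta<\pi/2$, and has $|P(\partial_s)|^2\equiv 0$; your field gives $\delta V(X)=0$ on it. So a purely tangential field cannot detect the bad wedge, and making $\phi$ radial does not help since the problematic contribution is not the $|\nabla\phi|$ term but the vanishing of the main one. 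The fix --- and this is what the argument in \cite{dl-r} does --- is to test with a field that has a nontrivial $\partial_t$-component vanishing on $\gamma$ (equivalently, to use as barrier the level sets of a function like $t - c\,s^2$ whose Hessian along any $n$-plane inherits a definite sign from $II_{\partial M}$). Then $\operatorname{div}_P X$ contains a term of the form $-\phi\cdot II_{\partial M}(P^{\top},P^{\top})$, strictly negative independently of $|P(\partial_s)|$, and your contradiction goes through with $\theta$ determined by the lower bound on the principal curvatures of $\partial M$.
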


Finally, the last result from \cite{dl-r} that we need to recall, Theorem 7.4 of that paper, is the analog in our setting of the compactness of the space of stable minimal hypersurfaces. As in the works of Schoen, Simon, and Yau \cite{SSY}, Schoen and Simon \cite{SS}, and, recently adapted to the free-boundary setting, of Guang, Li, and Zhou \cite{GLZ}, the following theorem was obtained in \cite{dl-r} as a consequence of curvature estimates for stable minimal hypersurfaces.

\begin{thm}\label{thm7.4-dr}
Let $M^{n+1}$, $\gamma^{n-1}\subset \partial M$ be as before, $U_2\subset M$ relatively open, $\theta \in (0, \pi/2)$, and $K_1 \subset \overline{U_2}$ a compact set which meets $\gamma$ at an opening angle at most $\theta$. Let $\{\Gamma_i\}$ be a sequence of stable minimal hypersurfaces in $U_2$ which are smooth away from a closed set $sing(\Gamma_i)$ with:
\begin{eqnarray*}
\nonumber & \H^{n-2}(sing(\Gamma_i))=0, \gamma\cap sing(\Gamma_i) = \varnothing, (\partial \Gamma_i)\cap U_2 = \gamma \cap U_2,\\
\nonumber & \Gamma_i \subset K_1, \text{ and } \sup_i \H^{n}(\Gamma_i) < \infty.
\end{eqnarray*}
Then, up to subsequences, $\Gamma_i$ converges to a varifold $V^{\prime}$, which is an integer varifold and, up to multiplicity, it is a stable minimal hypersurface $\Gamma$ with
\begin{equation*}
dim(sing(\Gamma))\leq n-7, (\partial M) \cap sing(\Gamma) = \varnothing, \text{ and }  (\partial \Gamma)\cap U_2 = \gamma \cap U_2.
\end{equation*}
In particular, the connected components of $\Gamma$ which intersect $\partial M$ have multiplicity one. Moreover, the convergence is smooth away from $sing(\Gamma)$.
\end{thm}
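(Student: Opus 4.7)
The strategy is to upgrade the Schoen--Simon interior compactness theorem \cite{SS} to the fixed-boundary setting by combining it with uniform boundary curvature estimates for stable minimal hypersurfaces contained in a wedge, together with Allard's boundary regularity \cite{al-1, al-2}.

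First, I would extract the limit. The uniform mass bound and the monotonicity formula (valid also at boundary points under the opening-angle hypothesis) give uniform density bounds, so after passing to a subsequence the $|\Gamma_i|$ converge in the varifold sense to an integer rectifiable varifold $V'$ supported in $K_1$. On $U_2\setminus \gamma$ each $\Gamma_i$ is boundaryless, stable and minimal with uniform area bound; hence classical Schoen--Simon compactness yields that $V'\llcorner G_n(U_2\setminus\gamma)$ is, up to multiplicity, a stable embedded minimal hypersurface with singular set of Hausdorff dimension at most $n-7$, the convergence being locally smooth off this singular set.

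The heart of the argument, and the main obstacle, is the behavior of $V'$ along $\gamma$. The opening-angle hypothesis forces each $\Gamma_i$ to approach $\gamma$ inside a wedge of aperture bounded by $\theta<\pi/2$, which rules out tangential mass concentration and pins the boundary density to $\Theta^n(\|V'\|,p)=1/2$ at each $p\in\gamma\cap K_1$. This is exactly the geometric input needed to run the boundary analog of the Schoen--Simon--Yau stable hypersurface curvature estimates \cite{SSY}, carried out for fixed-boundary stable minimal hypersurfaces in a wedge by De Lellis--Ramic in Section 6.4 of \cite{dl-r}. Those estimates are uniform up to $\gamma$, so they pass to the limit to show that $V'$ agrees in a neighborhood of $\gamma$ with a smooth embedded stable minimal hypersurface of multiplicity one having boundary exactly $\gamma$; smoothness across $\gamma$ is then confirmed by Allard's boundary regularity theorem \cite{al-2}, applicable because $\partial M$ is strictly convex and the density ratio is $1/2$.

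Finally, I would identify $\partial\Gamma$ and assemble. The identity $(\partial\Gamma)\cap U_2=\gamma\cap U_2$ follows by passing the current boundaries of the $\Gamma_i$ to the limit and invoking a Constancy-Theorem argument in the spirit of Lemma \ref{lemm.bdry.gamma} to rule out extra boundary pieces. Combining the interior and boundary statements produces a closed singular set of Hausdorff dimension at most $n-7$, disjoint from $\partial M$, and the multiplicity-one property at components meeting $\partial M$ comes directly from the boundary regularity. The hardest step is the uniform boundary curvature estimate: without it one could not rule out higher-multiplicity boundary limits, to which Allard's regularity theorem does not apply.
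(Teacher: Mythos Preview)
The paper does not prove this theorem: it is explicitly introduced as ``the last result from \cite{dl-r} that we need to recall, Theorem 7.4 of that paper,'' and no proof is given here. So there is no in-paper proof to compare your proposal against.

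That said, your outline is a faithful sketch of the argument in \cite{dl-r}: Schoen--Simon interior compactness away from $\gamma$, the wedge (opening-angle) condition to prevent tangential concentration at the boundary, the uniform boundary curvature estimates of Section~6.4 of \cite{dl-r} to get smooth multiplicity-one convergence near $\gamma$, and Allard's boundary regularity to close. Your identification of the boundary curvature estimate as the crux is exactly right; that is where the opening-angle hypothesis is consumed and what forces multiplicity one at components touching $\partial M$. The only remark is that the step ``$(\partial\Gamma)\cap U_2=\gamma\cap U_2$ follows by passing current boundaries to the limit and invoking Lemma~\ref{lemm.bdry.gamma}'' is not quite how it goes: the $\Gamma_i$ here are varifolds/hypersurfaces rather than currents with a fixed orientation, and the boundary identification is really a byproduct of the smooth convergence near $\gamma$ given by the curvature estimates, not a Constancy-Theorem argument on currents.
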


The stability condition assumed in the above statement is with respect to the second variation of the mass, and for variations that are compactly supported in $U_2\setminus (\partial M)$. The convergence of $\Gamma_i$ to $V^{\prime}$ is as varifolds in $U_2$.

Now we present the proof of Theorem \ref{replacement-properties}.

\begin{proof}[Proof of Theorem \ref{replacement-properties}]
The facts stated in items (i)-(iii) easily follow from properties (a)-(c) of the currents $S_i$. For part (iv), the argument is as follows. For every $U_2\subset M$ relatively open with $U_2\subset \subset int(K)$, consider the constant $\theta = \theta(U_2, int(K))$ and the compact $K_1\subset \overline{U_2}$ which meets $\gamma$ at an opening angle at most $\theta$, as given by Lemma \ref{lemma8.1-dr}. Observe that the $V_i$ considered in (\ref{pre-replacements}) coincide with $|S_i|$ in $int(K)$. Then, it follows from the second claim of the property (e) of $S_i$ that $V_i \in \V_s(int(K), \gamma)$, and from Lemma \ref{lemma8.1-dr} that $spt(||V_i||)\cap U_2 \subset K_1$. Observe that property (h) of $S_i$ implies that $spt(||V_i||)\cap U_2$ is composed of stable minimal hypersurfaces $\Gamma_i = \Sigma_i\cap U_2$, which satisfy all the required hypothesis of Theorem \ref{thm7.4-dr}. Therefore, up to subsequences, $\Gamma_i$ converges to a varifold $V^{\prime}$ satisfying, in the domain $U_2$, all the properties that we want to obtain for $\tilde{V}$ in $int(K)$.

The fact that $\tilde{V}$ has those same properties in $U_2$ follows from that, because the only possible difference between $|S_i|\llcorner G_n(U_2)$ and $\Gamma_i$ is multiplicity, and $|S_i|\llcorner G_n(U_2)$ converges to a varifold which coincides with $\tilde{V}$ in $G_n(U_2)$, while $\Gamma_i$ converges to $V^{\prime}$. Since $U_2\subset \subset int(K)$ is the only restriction on $U_2$, we obtain all the desired properties for $\tilde{V}$ in $int(K)$, and part (iv) is proved.

For part (v), we observe that the restriction $\tilde{V}\llcorner G_n(\R^N\setminus K)$ and $\tilde{V}\llcorner G_n(U)$ are stationary with respect to admissible variations. This follows from (i) and $V\in \V_s(M, \gamma)$ for the restriction to $G_n(\R^N\setminus K)$, and from (ii) and Lemma \ref{am-implies-stationary} for the restriction to $G_n(U)$. Then, for every $X\in \mathfrak{X}(M)$ with $X|_{\gamma} = 0$, and $g(X, \nu) \geq 0$ at $\partial M$, we can simply use a partition of unity to decompose the vector field as a sum $X=X_1+X_2$ of admissible variations such that $spt(X_1)\subset M\setminus K$ and $spt(X_2)\subset U$. Finally, since $\delta \tilde{V}(X_1)=\delta \tilde{V}(X_2)=0$, we conclude that $\delta \tilde{V}(X)=\delta \tilde{V}(X_1) + \delta \tilde{V}(X_2)=0$, for all such $X$. This proves the lemma.
\end{proof}

\subsection{Main existence theorem}\label{comb-argument}

In this part, we explain the fundamental existence theorem. It guarantees the existence of limit varifolds that are simultaneously stationary, in the sense described in definition \ref{stationary}, and almost minimizing is small annuli centered at arbitrary points of $M$.

In what follows, we use $A(p,s,r)$ to denote some open annuli centered at points $p \in M$, of inner and outer radii $s$ and $r$, respectively. More precisely, we consider $M$ isometrically embedded in $\R^N$. We use $A(p,s,r)$ to denote the portion in $M$ of Euclidean annuli centered at $p$.

\begin{thm}\label{comb-arg}
Let $M$, $\gamma$, and $\Pi$ be as in the statement of Theorem\ref{teorema-B}. Then, there exists an integral varifold $V \in \V_n(M)$ such that:
\begin{enumerate}
\item[(a)] $||V||(M) = \LL (\Pi)$

\item[(b)] $V\in \V_s(M,\gamma)$

\item[(c)] for every $p\in M$, there exists a positive number $r_p$ such that $V$ is almost minimizing in $A(p,s,r)$, for all $0<s<r<r_p$.  
\end{enumerate}
\end{thm}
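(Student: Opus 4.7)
The plan is to follow the Almgren--Pitts combinatorial/existence scheme, adapted to the fixed boundary setting developed in the previous sections. By Proposition \ref{pull-tight}, I may start with a critical sequence $\{\Psi_i\}_i \in \Pi$ whose critical set is contained in $\V_s(M,\gamma)$; any $V \in Crit(\{\Psi_i\})$ automatically satisfies conclusions (a) and (b), so the entire task reduces to showing that some such $V$ also satisfies the annular almost minimizing condition of (c). I would argue this by contradiction, assuming that \emph{no} element of $Crit(\{\Psi_i\})$ is almost minimizing in every sufficiently small concentric annulus at every point of $M$.

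Under that assumption, for each $V \in Crit(\{\Psi_i\})$ one can pick $p(V) \in M$ and radii $r_k \searrow 0$ such that in each annulus $A(p(V),s_k,r_k)$, $V$ fails to be almost minimizing. The compactness of $Crit(\{\Psi_i\})$ in the $\F$-topology, combined with a finite covering argument exactly analogous to pages 165--174 of \cite{pitts}, reduces the situation to producing, at each failure point and each relevant scale, a collection of $3\cdot 3^m + 1$ pairwise disjoint concentric annuli in which the nearby varifolds $|\Psi_i(x)|$ are not almost minimizing. The number $3\cdot 3^m + 1$ is the combinatorial count needed so that neighboring cells of an $m$-dimensional cubical grid can each be assigned one annulus from its own disjoint family without conflict.

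The second step is to convert the failure of almost minimizing into an effective mass-reducing deformation. Via Theorem \ref{am-equiv-defs}, the negation of the $\A(A;\varepsilon,\delta;\M)$ condition yields, for any fixed $\delta > 0$, a finite sequence $T_0,T_1,\ldots,T_q \in \Z_n(M,(M\setminus A)\cup\gamma)$ with $(\partial T_j)\llcorner A = [|\gamma|]\llcorner A$, $\operatorname{spt}(T_j - T_0) \subset A$, $\sup_j \M(T_j)\leq \M(T_0)+\delta$, and $\M(T_q)\leq \M(T_0) - \eta$ for a uniform $\eta > 0$ depending only on the annulus and the target varifold. The local interpolation Lemma \ref{interp.flat-to-mass} is exactly what bridges $\Flat_C$-close competitors coming from the definition of almost minimizing with the $\M$-fine discrete sequences we need, all while maintaining the restricted boundary datum $[|\gamma|]\llcorner A$.

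Finally, I would assemble these local deformations into a globally defined competitor. For each cell $\alpha \in I(m,k_i)_m$ whose vertex values $|\Psi_i(x)|$ are $\F$-close to a bad varifold $V$, select one annulus from the disjoint family at $p(V)$ so that neighboring cells are assigned disjoint annuli (possible by the $3\cdot 3^m + 1$ count), apply the local $\M$-reducing deformation in that annulus, and splice the results using Theorems \ref{interpolation_boundary} and \ref{discretization} together with Lemma \ref{lemma4.1ofMN}. The outcome is a homotopic $(m,\M)$-sequence $\{\tilde{\Psi}_i\}_i \in \Pi$ with $L(\{\tilde{\Psi}_i\}) \leq L(\{\Psi_i\}) - \eta/2$, contradicting that $\{\Psi_i\}$ was critical for $\LL(\Pi)$. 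The main obstacle, and the reason for the preparatory work in Sections \ref{sect-interpolation} and \ref{sect-amc}, is that every gluing, slicing, and interpolation in this argument must preserve the constraint $\partial T = \gamma$; this is precisely what Lemma \ref{lemm.bdry.gamma}, the boundary-preserving versions of the Almgren interpolation, and the carefully adapted Lemma \ref{interp.flat-to-mass} (with its nontrivial extension to conical replacements at points of $\gamma$ via Lemmas \ref{conical-replacements} and \ref{choice-p_i}) have been designed to guarantee.
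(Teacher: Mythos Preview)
Your proposal is correct and follows the same approach as the paper: apply Proposition \ref{pull-tight} to obtain a critical sequence with stationary critical set, then run Pitts' combinatorial contradiction argument with the boundary constraint preserved throughout. One minor correction worth noting: the step in which the local mass-reducing deformation is propagated to nearby slices is not carried out via Theorems \ref{interpolation_boundary} and \ref{discretization}, but rather by Pitts' direct cut-and-paste construction (part 9 of his proof of Theorem 4.10, specifically the formula defining $T(j,2)$ on page 168 of \cite{pitts}), which uses $\Flat_M$-isoperimetric choices and slicing. The paper's point is precisely that this construction goes through verbatim because $\partial \varphi_i(x) = \gamma$ for large $i$, so the differences $\varphi_i(x)-\varphi_i(y)$ are cycles and the usual isoperimetric choices apply; the interpolation and discretization theorems play their role earlier (in the pull-tight step and in Theorem \ref{am-equiv-defs}), not in the combinatorial splicing itself.
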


This result is the analog in the present setting of Theorem 4.10 of \cite{pitts}. The existence of a varifold with the desired properties is obtained after an argument by contradiction, which contains a complicated combinatorial part. The proof starts with the choice of a critical sequence $\{\varphi_i\} \in \Pi$ for which all min-max limits are in $\V_s(M, \gamma)$. In our setting, this was achieved in Proposition \ref{pull-tight}. In the contradiction argument, we assume that every $\varphi_i(x)$, with large $i\in \N$, and mass close to the min-max invariant,
\begin{equation*}
\M(\varphi_i(x)) \geq \LL (\Pi) - \delta,
\end{equation*}
admits finite deformations $T_0= \varphi_i(x), T_1, T_2, \ldots, T_q \in \Z_n(M, \gamma)$ which are are arbitrarily fine with respect to $\Flat_M$, supported in small annuli, preserving the boundary constraint, such that the masses $\M(T_j)$ are uniformly bounded by $\M(\varphi_i(x))$ plus an arbitrarily small value, and $\M(T_q) < \M(\varphi_i(x))-\varepsilon$, where $\varepsilon$ is a positive constant independent of $\varphi_i(x)$. It follows from Theorem \ref{am-equiv-defs} that we can assume that these finite sequences are fine in $\M$. In order to achieve a contradiction, we need to combine these mass-decreasing, well controlled, finite variations of big slices to obtain a better competitor.

The analog of Theorem \ref{comb-arg} in the work of De Lellis and Ramic \cite{dl-r} is explained in details in sections 4 and 5 of that paper. There the authors use a delicate argument, Lemma 5.1 of \cite{dl-r}, to explain how one can use a $1$-parameter deformation of a single slice with properties similar to those of our sequence $T_j$, to deform nearby slices. The key idea of that argument uses a tool called ``freezing", which was introduced in \cite{dl-t}.

In the discrete setting, the technique that is used in the corresponding step is a cut and paste argument, which uses $\Flat_M$-isoperimetric choices and slicing theory for integral currents. In Pitts' proof of Theorem 4.10 of \cite{pitts}, this is the content of part 9. As we have mentioned in previous sections, in light of the fact that $\partial \varphi_i(x) = \gamma$ for large $i\in \N$, we can perform that cut and paste argument using the same notion of $\Flat_M$-isoperimetric choices and the same formula to construct the deformations of nearby slices. See the formula that defines $T(j,2)$ on page 168 of \cite{pitts}. Since all $\Z_n(M,\gamma)$ currents considered in our deformation are defined by the same expression as in the case of $n$-cycles, we obtain the exact same properties.

The rest of the argument is lengthy, but it is purely combinatorial in the sense that no other operations on currents or deformations are needed. Therefore, we can combine the above sequences $T_j=T_j(\varphi_i(x))$ to deform $\{\varphi_i\}$ homotopically to a new sweepout $\{\psi_i\}\in \Pi$ with
\begin{equation*}
L(\{\psi_i\}) < L(\{\varphi_i\}) = \LL (\Pi).
\end{equation*}

\section{Tools for regularity and Proof of Theorem \ref{teorema-B}}\label{sect-teoremaB}

In this section we prove the min-max theorem stated in the introduction. More precisely, we give reasons for the fact that the varifold $V \in \V_n(M)$ obtained on Theorem \ref{comb-arg} satisfies the required properties. The conclusions about the support of $V$ near points $p\in M\setminus (\partial M)$ follow from the works of Pitts \cite{pitts} and Schoen and Simon \cite{SS}; i.e., near any such points $p$, the support of $V$ is an embedded minimal hypersurface that it is regular except for a set of Hausdorff dimension at most $n-7$. In order to achieve similar smoothness properties near the boundary, we explore the variational properties that we proved for $V$, and apply the theories developed in \cite{al-2} and \cite{dl-r}. The arguments presented in this section are very similar to those in the regularity part of \cite{dl-r}. We decided to include these details here because in their proof of Lemma 10.1 of that paper, the authors focused on the unconstrained case.


In the first part of the proof, we consider a varifold $V\in V_n(M)$ which is integral and is stationary in the sense of definition \ref{stationary}. We show that any varifold tangent to $V$ at a point $p\in \gamma\cap spt(||V||)$ is the sum, with positive integer coefficients, of finitely many $n$-dimensional linear half-spaces containing $T_p\gamma$. This is the content of Proposition \ref{k-half-spaces}. Its proof involves the combination of results from the works of Allard \cite{al-2} and De Lellis and Ramic \cite{dl-r}. We start by recalling some of those facts.

\begin{lemma}\label{monotonicity}
Let $V\in \V_n(M)$ be an integral varifold in $\V_s(M, \gamma)$. For every $p\in \gamma\cap spt(||V||)$, there exist a positive constant $\rho_0$ and a smooth function $\Psi(\rho)$ satisfying $\Psi(\rho)\rightarrow 0$, as $\rho\rightarrow 0$, and such that
\begin{equation*}
\rho \in (0, \rho_0) \mapsto e^{\Psi(\rho)}\frac{||V||B_{\rho}(p)}{\rho^n}
\end{equation*}
is monotone non-decreasing. In particular, the density of $||V||$ at $p$,
\begin{equation*}
\Theta^n(||V||,p) = \lim_{\rho\rightarrow 0} \frac{||V||B_{\rho}(p)}{\omega_n\rho^n}
\end{equation*} 
is well-defined. Here $\omega_n$ is the $n$-dimensional volume of an Euclidean $n$-ball of radius one. Moreover, $\Theta^n(||V||,p)$ is finite for all $p\in \gamma\cap spt(||V||)$.
\end{lemma}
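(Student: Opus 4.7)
The plan is to obtain the monotonicity by testing the admissibility condition $\delta V(X)\geq 0$ against an approximately radial family of admissible vector fields, adapted to the coordinates $E_p = (x,s,t)$ of Section~\ref{sect-local-interpolation}. Fix $p\in\gamma\cap spt(||V||)$, work inside the neighborhood $Z$ of $p$ provided by Lemma~\ref{choice-Z}, and write $r=r_p$ for the coordinate radial function. The key issue is that the natural test field $r\nabla r$ does not vanish on $\gamma$, so it has to be modified near $\gamma$ in order to respect the admissibility of Definition~\ref{stationary}.

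First, I would construct a smooth vector field $Y$ on $Z$ which (i) vanishes on $\gamma$, (ii) satisfies $g(Y,\nu)\geq 0$ on $\partial M$, and (iii) agrees with $r\nabla r$ outside a small conical neighborhood of $\gamma$. A convenient choice is
\begin{equation*}
Y \;=\; r\nabla r \;-\; \chi\!\left(\frac{s^{2}+t^{2}}{r^{2}}\right)\, x_{i}\,\pari,
\end{equation*}
where $\chi:[0,\infty)\to[0,1]$ is smooth with $\chi(0)=1$ and supported in $[0,1/2)$. On $\gamma=\{s=t=0\}$ the radial field satisfies $r\nabla r = x_i\pari$ to leading order, so the subtraction cancels it and $Y|_{\gamma}=0$. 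The correction is tangential to $\partial M$, hence the $\pat$-component of $Y$ is $t\geq 0$ on $\partial M$ (up to curvature corrections of the same sign), and $Y$ is non-exterior.

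Second, for a non-negative cutoff $\phi$, set $X_\phi = \phi(r) Y$ and expand
\begin{equation*}
0 \;\leq\; \delta V(X_\phi) \;=\; \int \phi(r)\,\text{div}_S Y\, dV \;+\; \int \phi'(r)\,g(\nabla r, Y)\, dV.
\end{equation*}
Using the Hessian and Christoffel estimates from Section~\ref{sect-local-interpolation}, outside the support of the correction one has $\text{div}_S(r\nabla r) = n + O(r)$ and $g(\nabla r, r\nabla r) = r + O(r^2)$, recovering the classical interior monotonicity; the contribution of the correction $\chi\cdot x_i\pari$ is $O(1)$ pointwise but is supported inside the set $\{s^2+t^2\leq r^2/2\}$, a conical tubular neighborhood of $\gamma$. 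Choosing $\phi=\phi_\rho$ to approximate the characteristic function of $[0,\rho]$ and setting $m(\rho) = ||V||(B_\rho(p))$ yields
\begin{equation*}
\rho\, m'(\rho) \;\geq\; n\, m(\rho) \;-\; C\rho\, m(\rho) \;-\; E(\rho),
\end{equation*}
where $E(\rho)$ is bounded by the $||V||$-mass of the conical neighborhood of $\gamma$ inside $B_\rho(p)$ and $C$ depends only on the second fundamental forms of $\gamma\subset\partial M$ and $\partial M\subset M$. Dividing by $\rho^{n+1}$ and integrating produces a smooth $\Psi(\rho)$ with $\Psi(\rho)\to 0$ as $\rho\to 0$ for which $e^{\Psi(\rho)}\rho^{-n}m(\rho)$ is non-decreasing, and both the existence and the finiteness of $\Theta^n(||V||,p)$ follow.

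The main obstacle is absorbing the error $E(\rho)$ into a smooth correction $\Psi$ with $\Psi(\rho)\to 0$. The crucial input is that $||V||(\gamma)=0$ for every $V\in\V_s(M,\gamma)$, a fact established in our setting by the argument of Lemma~6.4 in \cite{dl-r}. Combined with the Radon regularity of $||V||$, this forces the mass in any conical tubular neighborhood of $\gamma$ of width proportional to $r$ to be $o(r^n)$ as $r\to 0$, so that $E(\rho)=o(\rho^n)$. The induced correction $\Psi(\rho) = C\rho + \int_0^\rho E(u)\,u^{-n-1}\,du$ is then well defined and smooth on $(0,\rho_0)$, tends to zero as $\rho\to 0$, and yields the desired monotone quantity.
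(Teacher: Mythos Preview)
The paper does not prove this lemma; the sentence immediately preceding it reads ``We start by recalling some of those facts,'' and Lemma~\ref{monotonicity} is one of the facts being imported from Allard's boundary theory \cite{al-2} and its adaptation to the constrained setting in \cite{dl-r}.

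Your direct argument has a genuine gap in the final paragraph. The claim $E(\rho)=o(\rho^n)$ does not follow from $||V||(\gamma)=0$. The support of the correction $\chi((s^2+t^2)/r^2)\,x_i\pari$ is the \emph{fixed-aperture} cone $\{s^2+t^2<r^2/2\}$ about $\gamma$; knowing that the $(n-1)$-dimensional set $\gamma$ carries no $||V||$-mass says nothing about the mass inside such a cone. Concretely, take $V$ to be a single half $n$-plane $\pi\subset T_p^+M$ with $\partial\pi=T_p\gamma$ (the model case the lemma must cover): then $||V||(\gamma)=0$ trivially, yet $||V||\bigl(\{s^2+t^2<r^2/2\}\cap B_\rho(p)\bigr)$ equals a fixed positive fraction of $\omega_n\rho^n/2$ for every $\rho$. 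Thus $E(\rho)\sim c\rho^n$ with $c>0$, the integral $\int_0^\rho E(u)\,u^{-n-1}\,du$ diverges logarithmically at $0$, and no $\Psi$ with $\Psi(\rho)\to 0$ can absorb the error. (Even were the integral finite, your $\Psi$ would depend on $||V||$ and need not be smooth, contrary to the statement.)

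The route the paper is invoking from \cite{al-2} does not attempt to make the test field vanish on $\gamma$. Instead one uses that $V\in\V_s(M,\gamma)$ is genuinely stationary in $M\setminus\gamma$ (noted right after Definition~\ref{stationary}), and that $\gamma$ is a smooth $(n-1)$-submanifold; Allard's boundary monotonicity then applies directly to integral varifolds stationary off such a set, and produces a correction $\Psi(\rho)=O(\rho)$ that depends only on the local geometry of $M$, $\partial M$, and $\gamma$.
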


As usual, we assume that $M$ is isometrically embedded in $\R^N$. For $p\in \R^N$ and $r>0$, we use the maps $\eta_{p,r}: \R^N \rightarrow \R^N$ defined by $\eta_{p,r}(y)= r^{-1}(y-p)$. It follows from Lemma \ref{monotonicity} that, for $p\in \gamma\cap spt(||V||)$, the masses of the varifolds $(\eta_{p,r})_{\#}V$ in the unit ball centered at the origin of $\R^N$ are uniformly bounded. In conclusion, there exists a sequence $r_k$ of positive numbers decreasing to zero such that $(\eta_{p,r_k})_{\#}V$ converges to a varifold $C$. Any such limit is called a tangent varifold of $V$ at $p$, and the space of all such objects is usually denoted by $\text{Var Tan}(V,p)$.

In the following lemma, we list some properties of $C \in \text{Var Tan}(V,p)$. We use $T^+_pM$ to denote the smooth limit of $\eta_{p,r_k}(M)$, as $k\rightarrow \infty$. Similarly, let $T_pM$ be the only $(n+1)$-dimensional linear subspace of $\R^N$ that contains the half-space $T^+_pM$, and $T_p\gamma$ be the $(n-1)$-subspace that represents the limit of the analogous images of the submanifold $\gamma$.

\begin{lemma}\label{var-tan}
Let $V\in \V_n(M)$ be an integral varifold in $\V_s(M, \gamma)$. For every $p\in \gamma\cap spt(||V||)$ and $C \in \text{Var Tan}(V,p)$, we have the following properties:
\begin{enumerate}
\item[(i)] $C$ is an integral varifold supported in $T^+_pM$
\item[(ii)] $C$ is stationary in $(T_pM)\setminus (T_p\gamma)$
\item[(iii)] $C$ is a cone; i.e., $(\eta_{0,r})_{\#}C = C$, for all $0<r<\infty$.
\end{enumerate}
\end{lemma}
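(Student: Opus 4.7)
I would organize the three claims as follows: establish local mass bounds and the support condition first, then prove stationarity, use that to upgrade to integrality, and finally deduce the cone property from the monotonicity formula. Throughout, write $V_k := (\eta_{p,r_k})_\# V$.

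First, the change of variables $\|V_k\|(B_R(0)) = r_k^{-n}\|V\|(B_{Rr_k}(p))$, combined with Lemma \ref{monotonicity}, shows that for every fixed $R>0$ this quantity is uniformly bounded in $k$ and in fact converges to $\omega_n R^n \Theta^n(\|V\|,p)$ as $k\to\infty$. Because $\text{spt}(V_k)\subset \eta_{p,r_k}(M)$ and the latter converges in $C^2$ to $T_p^+M$ (recall $p\in \gamma\subset\partial M$), the support of any subsequential limit $C$ lies in $T_p^+M$. This yields the support part of (i) and gives the key identity $\|C\|(B_R)=\omega_n R^n \Theta^n(\|V\|,p)$ for every $R>0$, which I shall use later.

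Next I would verify (ii). By the observation following Definition \ref{stationary}, together with the maximum principle giving $\text{spt}(\|V\|)\cap\partial M\subset\gamma$, the varifold $V$ is stationary with respect to all vector fields in $\X(M)$ that are compactly supported in $M\setminus\gamma$. Given $X\in C_c^\infty(T_pM\setminus T_p\gamma;T_pM)$, define $X_k(y) := X(\eta_{p,r_k}(y))$ for $y$ near $p$ in $\R^N$, and let $\widetilde X_k$ be its orthogonal projection onto $TM$. Since $\text{spt}(X)$ is bounded away from $T_p\gamma$ and $\eta_{p,r_k}$ pushes a neighborhood of $p$ in $\gamma$ onto a set converging to $T_p\gamma$, for large $k$ the support of $\widetilde X_k$ lies in $M\setminus\gamma$. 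Applying $\delta V(\widetilde X_k)=0$ and the scaling identity
\[
\delta V(\widetilde X_k) \;=\; r_k^{\,n-1}\,\delta V_k\bigl(\widetilde X_k\circ \eta_{p,r_k}^{-1}\bigr),
\]
we obtain $\delta V_k(\widetilde X_k\circ \eta_{p,r_k}^{-1})=0$. As $k\to\infty$, the tangent planes $T_yM\to T_pM$ for $y\to p$, so $\widetilde X_k\circ \eta_{p,r_k}^{-1}\to X$ in $C^1$ on compact subsets of $T_pM\setminus T_p\gamma$; passing to the limit gives $\delta C(X)=0$.

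Having established stationarity of $C$ in $T_pM\setminus T_p\gamma$, I would apply Allard's integer compactness theorem on relatively open subsets compactly contained in $T_pM\setminus T_p\gamma$: the uniform local mass bounds from Step 1 and the vanishing first variations there imply $C$ is integral on $T_pM\setminus T_p\gamma$. To complete (i) it remains to exclude concentration on $T_p\gamma$. This is done by mimicking the proof of Lemma 6.4 of \cite{dl-r} (which gave $\|V\|(\gamma)=0$) at the tangent-cone level: evaluate the first variation of $C$ on a tangential vector field that contracts a tubular neighborhood of $T_p\gamma$ in $T_p^+M$ along geodesics emanating from $T_p\gamma$, to conclude $\|C\|(T_p\gamma)=0$. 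Finally, for the cone property (iii), I combine the stationarity from (ii), the fact that $\|C\|(T_p\gamma)=0$, and the constancy $\|C\|(B_R)/R^n\equiv \omega_n\Theta^n(\|V\|,p)$ established in Step 1. In the flat ambient space $T_pM$ the monotonicity formula carries no curvature correction, so constancy of the mass ratio forces $(\eta_{0,r})_\#C = C$ for every $r>0$.

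\textbf{Main obstacle.} The most delicate point is Step 2: one has to show that the tangential projection $\widetilde X_k$ differs from $X_k$ by error terms that vanish in the rescaled first-variation limit. The error is $O(r_k\|X\|_{C^1})$ because $T_yM$ is within $O(|y-p|)$ of $T_pM$ in the Grassmannian, and this must be matched against the uniform local mass bounds on $V_k$ to conclude $\delta V_k\bigl((\widetilde X_k - X_k)\circ\eta_{p,r_k}^{-1}\bigr)\to 0$. Once that estimate is organized cleanly, the remainder of the argument is routine.
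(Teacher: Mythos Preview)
The paper does not supply a proof of this lemma; it is stated as a consequence of Allard's boundary theory \cite{al-2} and of \cite{dl-r}, and the text moves on immediately to apply it. Your outline is the standard argument that underlies such a citation, and it is essentially correct.

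One point deserves more care. In your final step you invoke ``the monotonicity formula'' for $C$ centered at the origin in order to pass from constant mass ratio to the cone property. But $0\in T_p\gamma$, and you have only established stationarity of $C$ in $(T_pM)\setminus(T_p\gamma)$; a radial vector field centered at $0$ is not admissible for that variational identity, so you cannot directly derive the monotonicity identity for $C$ at $0$. There are two clean fixes. First, you can bypass monotonicity for $C$ altogether and work at the level of $V$: the excess term in the monotonicity identity behind Lemma~\ref{monotonicity} controls $\int_{B_\rho(p)} |x^\perp|^2|x|^{-n-2}\,d\|V\|$, and after rescaling by $\eta_{p,r_k}$ and passing to the limit this forces $x^\perp=0$ for $\|C\|$-a.e.\ $x$, which is exactly the cone condition. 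Second---and this is what the paper itself does a few lines later, in the proof of Theorem~\ref{teorema-B}---you can reflect: set $C_1:=C+\theta_\# C$, which is stationary in all of $T_pM$ by Allard's reflection principle (\cite{al-2}, Section~3.2), so the ordinary interior monotonicity formula applies to $C_1$ at $0$; constancy of the mass ratio for $C_1$ (immediate from constancy for $C$) then gives that $C_1$, and hence $C$, is a cone. Either route closes the gap; the rest of your argument is fine.
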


Let $V$, $p$, and $C$ be as in the statement of Lemma \ref{var-tan}, and $W$ be the $2$-plane given as the orthogonal complement of $T_p\gamma$ in $T_pM$. We can apply Lemma 5.1 of \cite{al-2} for our choices of $C$ and $W$ in $\R^{n+1} = T_pM$. Let $S^n$ and $B_1^{n+1}(0)$ denote the unit sphere and ball in $T_pM$ with respect to the Euclidean inner product. For every $\varphi \in C^{\infty}(W\cap S^n)$, define $T(\varphi)$ by
\begin{equation*}
\int_{[B_1^{n+1}(0)\setminus (T_p\gamma)]\times G(n+1,n)} \varphi\bigg(\frac{W(x)}{|W(x)|}\bigg)\{w\in W: \langle w, x\rangle=0\}\cdot S \text{ } dC(x,S).
\end{equation*}
In the above expression, $G(n+1, n)$ is the space of $n$-dimensional linear subspaces of $T_pM$. We also follow the notation in subsection 2.3 of \cite{al-3}, where $W$, $\{w\in W: \langle w, x\rangle=0\}$, and $S$ also denote the orthogonal projections onto the respective subspaces, and the inner product $f \cdot g$ is the one defined as the trace of $f^{\ast}\circ g$ in the space $\text{Hom}(T_pM, T_pM)$ of homomorphisms of $T_pM$.

The above $T(\varphi)$ is the same defined in Allard's paper. It follows from the first part of his result that $T$ is a multiple of $\H^1\llcorner (W\cap S^n)$. On the other hand, $C$ is supported in $T^+_pM$, which is a half-space. Therefore, we conclude that the multiple must be zero, and part (2) of Lemma 5.1 in \cite{al-3} implies that $W(spt(||C||))\cap S^n$ is finite. In particular, $spt(||C||)$ is contained in a union of finitely many $n$-dimensional half-spaces $\pi_i$ that meet at $T_p\gamma$.

Finally, we observe that the fact that $C$ is stationary in $(T_pM)\setminus (T_p\gamma)$, together with the constancy theorem, Theorem 41.1 of \cite{simon}, imply that the multiplicity of $C$ is constant over each $\pi_i$. Summarizing, we have:

\begin{prop}\label{k-half-spaces}
Let $V\in \V_n(M)$ be an integral varifold that belongs to the class $\V_s(M, \gamma)$, introduced in definition \ref{stationary}, and $p\in \gamma\cap spt(||V||)$. If $C \in \text{Var Tan}(V,p)$, then there exist collections $\{\pi_i\}_{i=1}^k$ of $n$-dimensional half-spaces that contain $T_p\gamma$, and $\{c_i\}_{i=1}^k\subset \N$ such that $C = \sum_{i=1}^k c_i \pi_i$.
\end{prop}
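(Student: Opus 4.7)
The plan is to combine the structural facts from Lemma \ref{var-tan} with Allard's analysis of boundary tangent cones (Lemma 5.1 of \cite{al-2}), and then to invoke the constancy theorem to pin down the multiplicities. Essentially all the ingredients have already been assembled in the discussion preceding the statement, so what remains is to organize them in the right order.

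First I would fix $C \in \text{Var Tan}(V,p)$ and let $W \subset T_pM$ denote the $2$-plane orthogonal to $T_p\gamma$. By Lemma \ref{var-tan}, $C$ is an integral cone supported in the closed half-space $T^+_pM$ and stationary in $T_pM \setminus T_p\gamma$. Identifying $T_pM$ with $\R^{n+1}$ in such a way that $T_p\gamma$ becomes the standard $(n-1)$-plane and $W$ its orthogonal complement, I would set up the linear functional $T(\varphi)$ exactly as displayed before the proposition statement, and then apply Allard's Lemma 5.1, whose first conclusion says that $T$ equals a constant multiple of $\H^1 \llcorner (W \cap S^n)$.

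Next, because $spt(\|C\|) \subset T^+_pM$, the image of the support under the projection onto $W$ lies in a closed half-plane of $W$, so $T(\varphi) = 0$ whenever $\varphi$ is supported in the complementary open half-circle of $W \cap S^n$; combined with the proportionality, this forces the constant to vanish. Then the second conclusion of Allard's Lemma 5.1 yields that $W(spt(\|C\|)) \cap S^n$ is a finite set $\{w_1, \ldots, w_k\}$. Using that $C$ is a cone and that $spt(\|C\|)$ is $T_p\gamma$-invariant (a consequence of stationarity away from $T_p\gamma$ together with the cone property, which together imply translation invariance along the linear subspace $T_p\gamma$), one concludes $spt(\|C\|) \subset \bigcup_{i=1}^k \pi_i$, where $\pi_i$ is the $n$-dimensional half-space spanned by $T_p\gamma$ and the ray from the origin through $w_i$.

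Finally, on each open half-plane $\pi_i \setminus T_p\gamma$ the varifold $C$ is integral and stationary, so the constancy theorem (Theorem 41.1 of \cite{simon}) forces the multiplicity on $\pi_i$ to be a constant nonnegative integer $c_i$; discarding zero coefficients, we obtain the desired decomposition $C = \sum_{i=1}^k c_i \pi_i$. The point I expect to require the most care is the verification that the distribution $T$ genuinely satisfies the hypotheses of Allard's Lemma 5.1, since the ambient manifold $M$ is only $C^2$ near $\gamma$ rather than flat; this is handled by the fact that after blow-up the ambient geometry becomes exactly the Euclidean half-space $T^+_pM$ and $\gamma$ becomes the flat $(n-1)$-plane $T_p\gamma$, so that Allard's Euclidean statement applies to $C$ verbatim.
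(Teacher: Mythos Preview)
Your proposal is correct and follows essentially the same route as the paper: invoke Lemma~\ref{var-tan}, apply Allard's Lemma~5.1 of \cite{al-2} with $W=(T_p\gamma)^\perp$, use the half-space support $spt(\|C\|)\subset T_p^+M$ to force the proportionality constant to vanish, deduce finiteness of $W(spt(\|C\|))\cap S^n$, and conclude via the constancy theorem. One small remark: the parenthetical justification you give for $T_p\gamma$-translation invariance (``stationarity away from $T_p\gamma$ together with the cone property'') is not quite the right reason and is in any case unnecessary here, since the containment $spt(\|C\|)\subset\bigcup_i\pi_i$ follows directly from the finiteness of the projected support together with $\pi_i=W^{-1}(\R_{\geq 0}w_i)$.
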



From now on, we focus our attention on the varifold $V\in \V_n(M)$ obtained on Theorem \ref{comb-arg}. It follows from previous discussions that, away from the boundary of $M$, $V$ is smooth outside a small set. On the other hand, recall that the stationarity property $V \in \V_s(M, \gamma)$ implies that
\begin{equation*}
spt(||V||)\cap (\partial M) \subset \gamma \text{ and } ||V||(\gamma)=0.
\end{equation*}
Therefore, we conclude that $V$ is an integral varifold in $M$.


In the following proposition, we prove that the replacements of $V$ in small annuli also satisfy properties similar to those of $V$ stated in Theorem \ref{comb-arg}. We use the numbers $r_p$ that were introduced on part (c) of that result.

\begin{prop}\label{replacement-further-prop}
Let $M, \gamma, \Pi$, and $V \in \V_n(M)$ be as in Theorem \ref{comb-arg}. Fix any annulus $An = A(p,s,r)$, with $p\in M$ and $0<s<r<r_p$. Then, there exists a replacement $\tilde{V}$ for $V$ in $clos(An)$. Moreover, $\tilde{V} \in \V_s(M, \gamma)$, $||\tilde{V}||(M) = \LL (\Pi)$, and, for every $q \in M$, there exists a positive number $r^{\prime}_q$ such that $\tilde{V}$ is almost minimizing in $A(q,s,r)$, for all $0<s<r<r^{\prime}_q$. 
\end{prop}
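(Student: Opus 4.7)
My plan is to exhibit $\tilde V$ as the replacement produced by Proposition \ref{exist-replacements} on a slightly enlarged annulus and then transfer all of the required properties using Theorem \ref{replacement-properties} together with the localization of almost minimizing in Theorem \ref{am-equiv-defs}. The first step is the enlargement: since $0<s<r<r_p$, I choose $s',r'$ with $0<s'<s<r<r'<r_p$ and set $U:=A(p,s',r')$ and $K:=clos(An)$. Then $K$ is a compact subset of $U$ and, by Theorem \ref{comb-arg}(c), $V$ is almost minimizing in $U$. Proposition \ref{exist-replacements} therefore produces a replacement $\tilde V$ of $V$ in $K$, and items (iii), (v), and (ii) of Theorem \ref{replacement-properties} yield at once $||\tilde V||(M)=||V||(M)=\LL(\Pi)$, $\tilde V \in \V_s(M,\gamma)$, and that $\tilde V$ is almost minimizing in the open set $U$.

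It remains to identify, for each $q\in M$, a radius $r'_q>0$ such that $\tilde V$ is almost minimizing in every $A(q,s,r)$ with $0<s<r<r'_q$. I would split into two cases that together cover $M = U\cup (M\setminus K)$. If $q\in U$, I pick $r'_q>0$ small enough that $\overline{B(q,r'_q)}\subset U$; then every annulus $A(q,s,r)$ with $0<s<r<r'_q$ is relatively compact in $U$, and Theorem \ref{am-equiv-defs}(IV) gives that $\tilde V$ is almost minimizing there. If $q\in M\setminus K$, I pick $r'_q>0$ with $r'_q<r_q$ and $\overline{B(q,r'_q)}\cap K=\varnothing$; then $A(q,s,r)$ is disjoint from $K$ for $0<s<r<r'_q$, so $V$ and $\tilde V$ coincide on $A(q,s,r)$ by Theorem \ref{replacement-properties}(i). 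The approximating currents provided by the almost minimizing property of $V$ in $A(q,s,r)$ (available because $r'_q<r_q$) then certify the same property for $\tilde V$, since $\F_{A(q,s,r)}(|T|,\tilde V)=\F_{A(q,s,r)}(|T|,V)$ and the membership $T \in \A(A(q,s,r);\varepsilon,\delta;\Flat_C)$ is a condition on $T$ and the open set alone.

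The potentially sensitive points on $\partial An$ fall into the first case, since they sit strictly inside $U$; this is precisely what the enlargement was designed to achieve and is, in my view, the only step of the argument that requires a deliberate choice. Once the enlargement is in place, everything else is a direct application of results already established in Sections \ref{sect-pull-tight} and \ref{sect-amc}.
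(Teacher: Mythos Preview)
Your proof is correct and follows essentially the same approach as the paper: enlarge $An$ to an open annulus $U=A(p,s',r')$ with $0<s'<s<r<r'<r_p$, apply Proposition \ref{exist-replacements} and Theorem \ref{replacement-properties} to obtain the replacement and its basic properties, and then split into the cases $q\in U$ and $q\notin K$ to produce the radii $r'_q$. The only cosmetic difference is that the paper phrases the second case as $q\in M\setminus U$ (rather than $q\in M\setminus K$) and leaves the invocation of Theorem \ref{am-equiv-defs}(IV) implicit.
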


\begin{proof}
Let us use $K$ to denote the compact subset $clos(An)$. Choose $U = A(p,s^{\prime}, r^{\prime})$ such that these positive radii satisfy $s^{\prime}<s$ and $r<r^{\prime}<r_p$. Observe that $K\subset U$ and $V$ is almost minimizing in $U$. It follows from Proposition \ref{exist-replacements} and Theorem \ref{replacement-properties} that $V$ has a replacement $\tilde{V}$ in $K$, which satisfies the following properties:
\begin{itemize}
\item $\tilde{V} \in \V_s(M, \gamma)$
\item $||\tilde{V}||(M) = ||V||(M) = \LL (\Pi)$
\item $\tilde{V}$ is almost minimizing in $U$.
\end{itemize}
To conclude the proof, we verify that $\tilde{V}$ is almost minimizing in small annuli. If $q\in M$ belongs to $U$, then $\tilde{V}$ is almost minimizing in annuli centered at $q$ and contained in $U$. If $q \in M\setminus U$, choose $r^{\prime}_q>0$ such that $r^{\prime}_q\leq r_q$ and $dist_{M}(q, K)> r^{\prime}_q$. Since $\tilde{V}=V$ in $G_n(\R^N\setminus K)$, we conclude that $\tilde{V}$ is also almost minimizing in annuli with outer radii as most $r^{\prime}_q$. Observe that we can assume $r^{\prime}_p = r_p$. 
\end{proof}


We are now ready to present the proof of Theorem \ref{teorema-B}.

\begin{proof}[Proof of Theorem \ref{teorema-B}] Let $V\in \V_n(M)$ be the varifold obtained on Theorem \ref{comb-arg}. We already know that it satisfies the required properties at points in $M\setminus (\partial M)$. In order to prove boundary regularity and the other claims about the components of $V$ that intersect $\gamma$, it is enough to show that any varifold tangent $C \in \text{Var Tan}(V,p)$, for $p\in \gamma\cap spt(||V||)$, is an $n$-dimensional half-space. The result will follow from the theory in Allard's paper \cite{al-2}. 

Fix a varifold tangent $C$ as in the previous paragraph. Let $r_1, r_2, \ldots$, be a sequence of positive numbers decreasing to zero such that
\begin{equation}
C = \lim_{j\rightarrow \infty} (\eta_{p,r_j})_{\#} V.
\end{equation}
For large $j\in \N$, $V$ is almost minimizing in $U_j = A(p,2^{-1}r_j, 3r_j)$. Let $V_j$ be a replacement for $V$ in $K_j = clos(A(p,r_j, 2r_j))$. It follows from properties (i) and (iii) of Theorem \ref{replacement-properties}, and from the usual compactness result for varifolds, that, up to a subsequence, there exists
\begin{equation}\label{def-c-bar}
\overline{C} = \lim_{j\rightarrow \infty}(\eta_{p,r_j})_{\#} V_j,
\end{equation}
as varifolds in $\R^N$. Observe also that $\overline{C}=C$ in the Grassmannian over the complement in $\R^N$ of the closure of $A(O,1,2)$. We use $A(O,1,2)$ to denote the Euclidean annulus of radii $1$ and $2$, and centered at the origin $O\in \R^N$.

We claim that $\overline{C}$ is an integral varifold in $\R^N$, which is stationary in $(T_pM)\setminus (T_p\gamma)$. In order to verify this, we start by observing that $V_j$ has properties similar to those satisfied by $V$, as seen in Proposition \ref{replacement-further-prop}. In particular, $V_j$ is integral. Then, the varifolds $(\eta_{p,r_j})_{\#} V_j$ are integral and belong to $\V_s(M_j, \gamma_j)$ in the sense of definition \ref{stationary}, for $M_j = \eta_{p,r_j}(M)$ and $\gamma_j = \eta_{p,r_j}(\gamma)$. Since the submanifolds $M_j$ smoothly converge to the flat $T^+_pM$, the stationarity of $\overline{C}$, and its integrality away from $T_p\gamma$ follow from the compactness theorem of Allard \cite{al-3}. The full integrality follows from considerations similar to those in the proof of Lemma 6.4 of \cite{dl-r}. The first variation of $\overline{C}$ is non-negative with respect to compactly supported vector fields in $T^+_pM$ that are non-exterior at $\partial (T^+_pM)$, and vanish along $T_p\gamma$.

Property (iv) of Theorem \ref{replacement-properties} gives us that the restrictions
\begin{equation}\label{eq-replacement-j}
V_j^{\prime} = \big( (\eta_{p,r_j})_{\#} V_j \big) \llcorner G_n(A(O,1,2))
\end{equation}
are supported in stable minimal hypersurfaces in $M_j\cap A(O,1,2)$, which satisfy the required regularity and boundary assumptions of Theorem \ref{thm7.4-dr}. As observed in section 7.4 of \cite{dl-r}, the compactness theorem can still be applied for varying ambient spaces such as the $M_j$. The hypersurfaces in  (\ref{eq-replacement-j}) have uniformly bounded masses and are contained in compact sets which meet $\gamma_j$ at an opening angle $\theta_j$ uniformly away from $\pi/2$. The last fact holds because $V_j \in \V_s(M, \gamma)$, and Lemma \ref{lemma8.1-dr} applied for the choices $U_1=U_2=M$. 

It follows from expression (\ref{def-c-bar}) that $V_j^{\prime}$ converges to $\overline{C} \llcorner G_n(A(O,1,2))$ as varifolds in $A(O,1,2)$. Therefore, the compactness theorem implies that $\overline{C} \llcorner G_n(A(O,1,2))$ is supported on a stable minimal hypersurface $\Gamma$ whose singular set $sing(\Gamma) = \overline{\Gamma}\setminus \Gamma$ has Hausdorff dimension at most $n-7$, 
\begin{equation*}
sing(\Gamma)\cap \partial(T^+_pM) = \varnothing, \text{ and } \partial \Gamma\cap A(O,1,2) = T_p\gamma\cap A(O,1,2).
\end{equation*}
In particular, only one component of $\Gamma$ intersects $\partial (T^+_pM)$, and this component has multiplicity one in $\overline{C} \llcorner G_n(A(O,1,2))$.
 
Let $C_1$ be the the integral varifold obtained as the sum of $C$ and its reflection with respect to $T_p\gamma$, as defined in sections 2 and 3.2 of \cite{al-2}. In our setting, the reflection is given by $\theta(y) = y_{\gamma} - y^{\perp}$, for every $y \in T_pM$, where $y_{\gamma}$ is the orthogonal projection of $y$ to $T_p\gamma$, and $y^{\perp} = y - y_{\gamma}$. Thus, $C_1 = C + \theta_{\#}C$. Similarly, define $\overline{C}_1 = \overline{C}+ \theta_{\#}\overline{C}$. It follows from the reflection principle, section 3.2 of \cite{al-2}, that the integral varifolds $C_1$ and $\overline{C}_1$ are stationary in $T_pM$. Moreover, it is known, from Proposition \ref{k-half-spaces}, that $C_1$ is a cone. We also have that $C_1=\overline{C}_1$ in $G_n(B^{n+1}_1(0))$, and that
\begin{equation}
||C_1||(B^{n+1}_3(O)) = ||\overline{C}_1||(B^{n+1}_3(O)),
\end{equation} 
where $B^{n+1}_r(O)\subset T_pM$ denotes the open ball of radius $r$, centered at the origin. This implies that $C_1= \overline{C}_1$, and then, $C= \overline{C}$, see 2.4(6)(f) of \cite{pitts}.

Finally, since $\overline{C} \llcorner G_n(A(O,1,2))$ is a smooth minimal hypersurface with multiplicity one near $T_p\gamma$, and $C$ is a sum of half-spaces, as in the statement of Proposition \ref{k-half-spaces}, we conclude that $C$ is a half-space that contains $T_p\gamma$.
\end{proof}

\section{Proof of Theorem \ref{teorema-A}}\label{sect-teoremaA}

In this section we prove our main theorem. In the argument, we use the equivalence between singular homology and the homology of a chain complex of integral currents, see Theorem 5.11 of \cite{FF}.

\begin{proof}[Proof of Theorem \ref{teorema-A}]

Let $\Gamma_1$ and $\Gamma_2$ be the hypersurfaces in the statement of the theorem. We also use $\Gamma_1$ and $\Gamma_2$ to denote the integral $n$-currents associated with those. Our proof has three steps, similar to those in the proof of the analog result of \cite{dl-r}, Corollary 1.9 of that paper. The first step is the construction of a sweepout connecting $\Gamma_1$ and $\Gamma_2$. The goal of the second step is to show that the homotopy class of the sweepout obtained in step 1 is non-trivial. The third, and last, step is the application of the mountain pass result, which, in our case, is Theorem \ref{teorema-B}.

Let us start with the construction of the sweepout. Since $\partial (\Gamma_1-\Gamma_2)=0$, we can look at the homology class of $\Gamma_1-\Gamma_2$ in either $H_n(M)$, or $H_n(M, \gamma)$. Observe also that $[\Gamma_1-\Gamma_2]=0$ in $H_n(M)$ if and only if it is zero in $H_n(M, \gamma)$. Each of these assumptions are equivalent to the existence of $A\in I_{n+1}(M)$ such that $\partial A = \Gamma_1-\Gamma_2$. Since $\Gamma_1$ and $\Gamma_1$ are homologous in $M$, then $[\Gamma_1-\Gamma_2]=0$ in $H_n(M, \gamma)$ and there exists $A$ as above.

Intuitively, we can use Almgren's isomorphism, see section 3 of \cite{alm1},
\begin{equation*}
\pi_0 (\Z_n(M, \gamma), 0) \simeq H_n(M, \gamma),
\end{equation*}
to obtain a path in $\Z_n(M, \gamma)$ joining $\Gamma_1$ and $\Gamma_2$. Since we work with discrete sweepouts that are fine in $\M$, there is an obvious technical issue with simply using that path. In the next paragraph we explain how we overcome this.

Let $A\in I_{n+1}(M)$ be as above. Applying Lemma 6.1 of \cite{montezuma} to this current, and adding $\Gamma_2$ to the map obtained in this way, we conclude that there exists $\phi : [0,1]\rightarrow \Z_n(M, \gamma)$ with the following properties:
\begin{itemize}
\item $\phi(0) = \Gamma_1$ and $\phi(1) = \Gamma_2$,

\item $\phi$ is $\Flat$-continuous,

\item $\sup \{\M(\phi(t)) : t\in [0,1]\}< \infty$, and

\item $\limsup_{r\rightarrow 0} \m(\phi,r) = 0$, in the sense of item (iii) of section \ref{Sec.discretization}.
\end{itemize}
Therefore, we can apply Theorem \ref{discretization} to this map and obtain an $(1,\M)$-homotopy sequence of maps into $(Z_n(M,\gamma), P_0)$, for $P_0 : \partial I^1 \rightarrow \Z_n(M, \gamma)$ given by $P_0(0) = \Gamma_1$ and $P_0(1) = \Gamma_2$. Indeed, observe that the sequence $\{\varphi_i\}$ provided by that theorem has the properties required by definition \ref{sweepout}; properties (a), (b), (d), and (e) of Theorem \ref{discretization} imply that $\varphi_i$ and $\varphi_{i+1}$ are homotopic in $(\Z_n(M, \gamma), P_0)$ with $\M$-fineness $\delta_i$, and property (c) and the mass bound of $\phi$ imply that $\M(\varphi_i(x))$ are also uniformly bounded.

Let $\Pi \in \Pi_1^{\#}(Z_n(M,\gamma), P_0)$ be the homotopy class generated by $\{\varphi_i\}$. We claim that this class in non-trivial in the sense of Theorem \ref{teorema-B}, i.e.,
\begin{equation}\label{non-triv-1}
\LL (\Pi) > max \{\M(\Gamma_1), \M(\Gamma_2)\}.
\end{equation}
Since our maps are fine with respect to the mass norm, which is finer that the topology of the $\Flat$ norm, and $\Flat(\Gamma_1, \Gamma_2)>0$, the fact expressed in (\ref{non-triv-1}) follows from Lemma 11.2 of \cite{dl-r}. See also the main result of \cite{IM}.

Finally, apply Theorem \ref{teorema-B} to $\Pi$. It gives us a varifold $V = \sum_{i=1}^k m_i \overline{\Sigma_i}$, where $\Sigma_i$ are embedded minimal hypersurfaces and $m_i$ positive integers. We consider two cases. If some of the $\Sigma_i$ is closed, we observe that $\Sigma = spt(||V||)$ satisfies all the desired properties. It is distinct from $\Gamma_1$ and $\Gamma_2$, because we are assuming these do not have closed components. If all $\Sigma_i$ have non-empty boundary, then $m_i=1$, for all $i$, and we conclude that
\begin{equation*}
\H^n(\Sigma) = ||V||(M) = \LL (\Pi) > max\{\H^n(\Gamma_1), \H^n(\Gamma_2)\}.
\end{equation*}
In particular, $\Sigma$ is distinct from $\Gamma_1$ and $\Gamma_2$. It could happen that this $\Sigma$ is a combination of connected components of $\Gamma_1$ and $\Gamma_2$. We explain how this is ruled out in the next paragraph by exploring further one of the ideas in the proof of Corollary 1.9 of \cite{dl-r}.

Consider all embedded hypersurfaces with boundary $\gamma$ which are made of a combination of connected components of $\Gamma_1$ and $\Gamma_2$. These are all strictly stable minimal hypersurfaces. Pick the hypersurface $\tilde{\Gamma}_1$ in this class with maximal $\H^n$-measure, and let $\tilde{\Gamma}_2$ be the union of the connected components of $\Gamma_1$ and $\Gamma_2$ that are not in $\tilde{\Gamma}_1$. More precisely, if
\begin{equation}\label{eq-components}
\Gamma_1 = \Gamma(1,1) + \Gamma(1,2) \text{ and } \Gamma_2 = \Gamma(2,1) + \Gamma(2,2),
\end{equation}
where $\Gamma(1,1)$ and $\Gamma(2,1)$ are the connected components of $\Gamma_1$ and $\Gamma_2$ that appear in $\tilde{\Gamma}_1$, respectively, then
\begin{equation}\label{eq-components-2}
\tilde{\Gamma}_1 = \Gamma(1,1) - \Gamma(2,1) \text{ and } \tilde{\Gamma}_2 = \Gamma(2,2) - \Gamma(1,2).
\end{equation}
In expressions (\ref{eq-components}) and (\ref{eq-components-2}), the orientations are also considered. Observe that $\tilde{\Gamma}_1-\tilde{\Gamma}_2 = \Gamma_1 - \Gamma_2$, which implies that $\partial \tilde{\Gamma}_2$ is also $\gamma$, and that the hypersurfaces $\tilde{\Gamma}_i$ are homologous. It can also be verified that these are distinct. Then, we can construct a discrete sweepout joining the $\tilde{\Gamma}_i$ as before. It also follows immediately, from similar considerations, that the homotopy class $\Pi$ generated by this sweepout satisfies $\LL (\Pi) > \H^n(\tilde{\Gamma}_1)$. Since the hypersurfaces in our statement could intersect, we do not know whether or not $\tilde{\Gamma}_2$ is embedded. But, using that $\Gamma_1$ and $\Gamma_2$ are also competitors for $\tilde{\Gamma}_1$, and that $\H^n(\tilde{\Gamma}_1) + \H^n(\tilde{\Gamma}_2) = \H^n(\Gamma_1) + \H^n(\Gamma_2)$, we are able to check that 
\begin{equation*}
max \{\H^n(\tilde{\Gamma}_1), \H^n(\tilde{\Gamma}_2)\} = \H^n(\tilde{\Gamma}_1) < \LL (\Pi) .
\end{equation*} 
Thus, $\Pi$ is non-trivial and the theorem is proved.

\end{proof}

\bibliographystyle{amsbook}

\end{document}